\newcommand{\id}{\mathbbm{1}}\newcommand{\N}{\mathbb{N}}
\newcommand{\Q}{\mathbb{Q}}\newcommand{\Z}{\mathbb{Z}}\newcommand{\F}{\mathbb{F}}
\newcommand{\HI}{\operatorname{inv}}
\newcommand{\HL}{\operatorname{len}}
\newcommand{\irred}{\operatorname{irr}}
\newcommand{\constA}{{\rho_1(f(x))}}
\newcommand{\constB}{{\rho_2(f(x))}}
\newcommand{\constC}{{\rho_3(f(x))}}
\newcommand{\auxpol}{f_\ast}
\newcommand{\auxpolg}{\overline{f}}
\newcommand{\auxpoldn}{\overline{f}}
\newcommand{\map}[3]{ #1 : #2 \longrightarrow #3 }
\newcommand{\mapl}[5]{ #1 : #2 \longrightarrow #3 : #4 \longmapsto #5 }
\newcommand{\discr}[1]{ \operatorname{Discr}_\ast(#1) }
\newcommand{\prodant}[1]{ \operatorname{Prod}(#1) }
\newtheorem{theorem}{Theorem}[section]
\newtheorem{proposition}[theorem]{Proposition}
\newtheorem*{corollary*}{Corollary}
\newtheorem{lemma}[theorem]{Lemma}
\newtheorem{claim}[theorem]{Claim}
\newtheorem*{theorem*}{Theorem}
\newtheorem*{maintheorem*}{Main Theorem}
\newtheorem*{definition*}{Definition}
\newtheorem*{remark*}{Remark}
\newtheorem*{proposition*}{Proposition}
\newtheorem*{example*}{Example}
\newtheorem*{lemma*}{Lemma}
\newtheorem*{problem*}{Problem}
\newtheorem{openproblem*}{Open Problem}
\newtheorem*{observation*}{Observation}
\newtheorem{conjecture}[theorem]{Conjecture}
\theoremstyle{definition} \newtheorem{remark}[theorem]{Remark}
\theoremstyle{definition} \newtheorem{example}[theorem]{Example}
\theoremstyle{definition}\newtheorem{definition}[theorem]{Definition}
\theoremstyle{definition}
\title{The Fitting height of finite groups with a \\fixed-point-free automorphism satisfying an identity
	}
\author{Wolfgang Alexander Moens\footnote{This research was supported by the Austrian Science Fund (FWF) $P30842 - N35$.}}
\date{}
\begin{document}
	
\maketitle

\abstract{
	Motivated by classic theorems of Thompson and Berger on the Fitting height of finite groups with a fixed-point-free automorphism of coprime order, we conjecture that, for every non-zero polynomial $f(x) = a_0 + a_1 x + \cdots + a_d x^d \in \Z[x] $, there is an integer $k > 0$ with the following property. Let $G$ be a finite (solvable) group with a fixed-point-free automorphism $\alpha$ satisfying $\gcd(|G|,k)= 1$ and $$\{ g^{a_0} \cdot \alpha(g)^{a_1} \cdot \alpha^2(g)^{a_2} \cdots \alpha^d(g)^{a_d} | g \in G \} = \{1\}.$$ Then the Fitting height of $G$ is at most the number of irreducible factors of $f(x)$. We confirm the conjecture for a large family of polynomials with explicit constants $k$.
}

\section{Introduction}

\paragraph{Classic results.} Let $G$ be a finite group admitting a fixed-point-free automorphism $\alpha$. Such a group $G$ is necessarily solvable, according to the classification of the finite simple groups \cite{Rowley}. It is not known whether the derived length of $G$ can be bounded from above by a function that depends only on the order $n := |\alpha|$ of the automorphism. But Dade \cite{Dade} was able to find such a function bounding the Fitting height of $G$ and, in doing so, solved a conjecture of Thompson \cite{ThompsonFitting}. Jabara \cite{JabaraFitting} has recently lowered the bound on the Fitting height to $7 \cdot \Omega(n)^2$, where $\Omega(n)$ counts the number of prime divisors of $n$ with multiplicity. \\

Under the additional assumption that $\gcd(|G|,n) = 1$, even stronger results have been obtained. A theorem of Berger \cite{Berger}, refining an earlier result of Thompson \cite{ThompsonFitting}, states that the Fitting height of $G$ is then at most ${\Omega(n)}$. Examples of Gross \cite{GrossFittingEx} show moreover that this bound is optimal. In the special case that the automorphism has prime order $n$, Berger's theorem states that the group $G$ is nilpotent. This result, also originally due to Thompson \cite{ThompsonFixPointFreeAutomorphisms}, gave a positive answer to the so-called Frobenius conjecture. In this case, a theorem by Higman \cite{HigmanGroupsAndRings} shows that even the nilpotency class of $G$ can be bounded from above by a function that depends only on $n$. Kreknin and Kostrikin \cite{KrekninKostrikinRegularAutomorphisms} later found the explicit upper bound $(n-1)^{2^{(n-1)}}$. \\ 

In order to discuss various analogues of these results, we recall a definition from \cite{MoensIdentitiesGroups}. Let $H$ be a group and let $\gamma$ be an endomorphism of $H$. The polynomial $f(x) = a_0 + a_1 \cdot x + a_2 \cdot x^2 + \cdots + a_n \cdot x^n \in \Z[x]$ is an \emph{ordered identity} of $\gamma$ if the map $$\mapl{f(\gamma)}{H}{H}{h}{h^{a_0} \cdot \gamma(h)^{a_1} \cdot \gamma^2(h)^{a_3} \cdots \gamma^n(h)^{a_n}}$$ vanishes identically. 

\begin{example} \label{Ex1}
	Let $G$ be a finite group with automorphism $\alpha$. If the polynomial $-1 + x^n$ is an ordered identity of $\alpha$, then the order of $\alpha$ divides $n$. And vice-versa.
\end{example}

So we conclude that the Fitting height of a finite group admitting a fixed-point-free automorphism with ordered identity $-1 + x^n$ is bounded from above by $7 \cdot \Omega(n)^2$ and that it is bounded by $\Omega(n)$ if $\gcd(|G|,n) = 1$. 

\begin{example} \label{Ex2}
	Let $G$ be a finite group with endomorphism $\gamma$. If the constant polynomial $n$ is an ordered identity of $\gamma$, then the exponent of $G$ divides $n$. And vice-versa.
\end{example}

Finite groups of bounded exponent have been studied extensively in the context of the restricted Burnside problem, most notably by Hall---Higman \cite{HallHigmanReduction}, Kostrikin \cite{KostrikinBurnsideProblem}, and Zel'manov \cite{EfimBurnsideProblemEven,EfimBurnsideProblemOdd}. It is known, in particular, that the Fitting height of a finite solvable group with ordered identity $n \in \Z \setminus \{0\}$ is bounded from above by a function that depends only on the prime factorization $\prod_i p_i^{k_i}$ of $n$. In Shalev's note \cite{ShalevCentralizersResiduallyFiniteTorsion}, one can find the upper bound $\prod_i (2 k_i + 1) $, which readily implies the upper bound $ 3^{\Omega(n)}$.

\begin{example} \label{Ex3}
	Let $G$ be a finite group with endomorphism $\gamma$. If the linear polynomial $-n + t$ is an ordered identity of $\gamma$, then $G$ is an $n$-abelian group. And vice-versa.
\end{example}

These $n$-abelian groups were studied by Baer \cite{Baer} and classified by Alperin \cite{AlperinPowerAutomorphism}. The classification implies that, for $n \not \in \{0,1\}$, the Fitting height of a finite, $n$-abelian group $G$ is at most $\max \{ 3^{\Omega(n)} , 3^{\Omega(n-1)}\}$. If we further assume that $\gcd(|G|,n(n-1)) = 1$, then $G$ is known to be abelian, so that the Fitting height of $G$ is at most $1$. 

\begin{example} \label{Ex4}
	Let $G$ be a finite group with automorphism $\alpha$. The automorphism $\alpha$ is said to be $n$-split if and only if $1+x+x^2+\cdots +x^{n-1}$ is an ordered identity of $\alpha$.
\end{example}

It is not difficult to verify that the automorphism in Example \ref{Ex1} is $n$-split if it is fixed-point-free. And, conversely, if the automorphism in Example \ref{Ex4} is fixed-point-free, then its order divides $n$. But finite groups $G$ with an $n$-split automorphism that is \emph{not} necessarily fixed-point-free have also been studied extensively in the literature. Ersoy \cite{ErsoySplittingAutomorphisms} has shown that such a group $G$ is solvable, provided that $n$ is odd. If $n$ is a prime, then $G$ is even nilpotent, according to a theorem of Hughes---Thompson \cite{HughesThompson} and Kegel \cite{KegelSplit}. We refer to the work of Khukhro \cite{KhukhroSplitPrimeFiniteGroup,KhukhroCompact} and Zel'manov \cite{ZelmanovPlatonovConjecture} for more results involving $n$-split automorphisms. We also highlight Zel'manov's recent generalization of these results to torsion groups with identities \cite{ZelmanovAlgebrasTorsionGroupsIdentity}. \\

More theorems in this general spirit can be found in the literature. We refer, in particular, to Turull's classic results \cite{TurullFittingHeight} on the Fitting height of finite groups with a fixed-point-free group of coprime operators, and to the recent results in \cite{ErcanGulinGuloglu,ErcanGulogluJabara}.

\paragraph{A change in perspective.} For any given polynomial $f(x) \in \Z[x] \setminus \{0\}$, we now consider all the finite groups admitting a fixed-point-free automorphism with this ordered identity $f(x)$. We claim that it is possible to find a \emph{uniform} upper bound on the Fitting height of such groups, and that we can find a particularly good bound if we further exclude finitely-many primes from the torsion in those groups. 

\begin{conjecture} \label{Conjecture_Main}
	For every $f(x) \in \Z[x] \setminus \{0\}$, there exist integers $\operatorname{k}(f(x)) > 0$ and $\operatorname{m}(f(x)) $ with the following property. \\
	
	Let $G$ be a finite (solvable) group admitting a fixed-point-free automorphism $\alpha$ and let $f(x)$ be an ordered identity of $\alpha$.
	\begin{enumerate}[\rm (a)]
		\item Then the Fitting height of $G$ is at most $\operatorname{m}(f(x))$.
		\item If $\gcd(|G|,\operatorname{k}(f(x))) = 1$, then the Fitting height of $G$ is at most the number of irreducible factors of $f(x)$.
	\end{enumerate}
\end{conjecture}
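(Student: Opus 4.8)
The plan is to establish part~(b) by a minimal-counterexample argument whose engine is the representation theory of coprime fixed-point-free actions, and then to obtain part~(a) from~(b) by separately controlling the finitely many ``bad'' Fitting layers. Write $f(x) = a_0 + \dots + a_d x^d$, let $r$ denote the number of irreducible factors of $f$ in $\Z[x]$, counted with multiplicity, and let $g_1, \dots, g_s$ be the distinct irreducible factors. The candidate for $\operatorname{k}(f(x))$ is a product of $a_0$, $a_d$, the discriminants of the $g_j$, and the resultants of all pairs $g_j, g_{j'}$ with $j \neq j'$ --- enlarged, if one wants a single constant valid along the whole induction, so as to dominate $\operatorname{k}(g(x))$ for every divisor $g \mid f$. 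Excluding the primes dividing $\operatorname{k}(f(x))$ has the following effect: for every prime $p \mid |G|$ the reduction $\bar f \in \F_p[x]$ keeps the degree of $f$, the $\bar g_j$ stay separable, and distinct $\bar g_j$ stay coprime; consequently $\F_p[x]/(\operatorname{rad} \bar f)$ is an \'etale $\F_p$-algebra, a product of fields whose components are partitioned among the $\Z$-irreducible factors of $f$. This compatible splitting is what will let the coarse $\Z$-count govern the finer, prime-dependent behaviour.

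By the classification of the finite simple groups a finite group with a fixed-point-free automorphism is solvable \cite{Rowley}; moreover, with fixed-point-freeness taken in the Frobenius sense, $\gcd(|G|, |\alpha|) = 1$, so the classical coprime-action toolkit applies and $C_{G/N}(\alpha) = 1$ for every $\alpha$-invariant normal $N$, i.e.\ $\alpha$ stays fixed-point-free on quotients. Assume for~(b) that $G$ is a counterexample of minimal order, and recall that an ordered identity of $\alpha$ is inherited by every $\alpha$-invariant subgroup and quotient. Using the standard reduction machinery for Fitting heights under coprime fixed-point-free automorphisms \cite{Dade,TurullFittingHeight}, one reduces to the configuration in which $G$ carries a normal series $1 = V_0 < V_1 < \dots < V_h = G$ with $h = h(G) > r$, each $V_i/V_{i-1}$ an elementary abelian $p_i$-group for pairwise distinct primes $p_i \nmid \operatorname{k}(f(x))$, with $\alpha$ acting fixed-point-freely and as irreducibly as the reduction permits, and with the usual faithfulness of each section for the action of the stage below it. On every such section $\alpha$ acts $\F_{p_i}$-linearly with $f(\alpha) = 0$ and without eigenvalue $1$, so its minimal polynomial is squarefree and is a product of the mod-$p_i$ reductions of certain $g_j$. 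A warm-up special case, likely matching the ``large family'' of the abstract, is $f$ a product of cyclotomic polynomials (or of polynomials $x^n - c$), where each section is handled directly by Berger's theorem \cite{Berger}.

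The heart of the argument is a descent lemma: in the reduced configuration one can choose an $\alpha$-invariant section $W$ (a chief factor, or the bottom layer $V_1$) and a single irreducible factor $g \mid f$ such that (i) $\bar g$ divides the minimal polynomial of $\alpha$ on $W$, so that $g$ is genuinely \emph{active} on $W$, and (ii) the automorphism induced by $\alpha$ on the complementary section --- the one of Fitting height $\ge h - 1$ --- admits the proper divisor $f(x)/g(x)$, which has $r - 1$ irreducible factors, as an ordered identity. Granting this, that section is a group of smaller order carrying a fixed-point-free automorphism of coprime order, with ordered identity $f/g$ and order prime to $\operatorname{k}(f(x))$; minimality forces its Fitting height to be at most $r - 1$, contradicting $h - 1 > r - 1$. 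To produce clause~(ii) one studies $W$ simultaneously as a module for $\langle \alpha \rangle$ and for the relevant $p'$-group, using Hall--Higman-type bounds \cite{HallHigmanReduction} together with the Glauberman/Shult/Gross theorems on fixed-point-free coprime actions to pin the eigenvalues of $\alpha$ on $W$ against those on the layers below, and then the \'etale structure of $\F_{p_i}[x]/(\bar f)$ to transport that eigenvalue information into a genuine factorization of the integral ordered identity on the lower stage. For part~(a) the same reductions run without the coprimality hypothesis, except that a bounded number of layers now have $p_i \mid \operatorname{k}(f(x))$; one bounds both the number of such ``bad'' layers and the Fitting height they can contribute by a direct Hall--Higman estimate depending only on $f$, and adds this to the bound from~(b) to get $\operatorname{m}(f(x))$.

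The step I expect to dominate the difficulty is clause~(ii) of the descent lemma. Deciding which irreducible factors of $f$ are active on a given section is comparatively easy; showing that \emph{deleting one active factor still leaves a valid ordered identity on everything strictly below} is the real obstacle, since this is precisely where several distinct primes must be controlled at once and where the known coprime-action theorems must be promoted from statements about the \emph{order} of an automorphism to statements about its polynomial \emph{identities} --- in the classical case $f = x^n - 1$ this promotion is exactly the ``divide out one prime of $n$'' step, and for a general $f$ there is no group-order substitute for it. A secondary, purely arithmetic difficulty is to confirm that the factorization over $\Z$, rather than the finer factorizations over the various $\F_{p_i}$, is the correct quantity; this is why the precise shape of $\operatorname{k}(f(x))$ --- forcing \'etale reductions with factor-compatible splitting --- is structurally essential rather than cosmetic, and it is also where the sharpest values of the constants in the ``large family'' result should come from.
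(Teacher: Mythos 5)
The statement you are trying to prove is labelled a \emph{conjecture} in the paper, and the paper does \emph{not} prove it. What the paper establishes (Theorem~\ref{Theorem_Main} and its sharpening Theorem~\ref{Theorem_DetailedMain}) is claim~(b) for the restricted family of \emph{Higman-solvable} polynomials, and the author explicitly remarks that the paper's descent ``works if and only if the polynomial $f(x)$ is Higman-solvable.'' So any attempt at a proof of the conjecture in full generality is going well beyond the paper, and you should first be aware that examples such as $f(x)=(x^2-2)(x^3-3)$ and $f(x)=(x^n-1)/(x-1)$ for composite $n$ fall outside the class the paper can handle.

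Your outline contains a genuine gap, which you yourself locate correctly at clause~(ii) of your descent lemma: you need to show that after isolating one ``active'' irreducible factor $g$ on a bottom layer, the polynomial $f/g$ remains an \emph{ordered identity for $\alpha$ itself} on a complementary section of Fitting height $\geq h-1$. Nothing in the paper supports this, and the paper's own mechanism shows why one should doubt it. The paper's reduction (Proposition~\ref{Theorem_Aux} and Theorem~\ref{Proposition_Upgrade}) does \emph{not} pass from $(G,\alpha,f)$ to $(\underline{\operatorname{F}_1}(G),\alpha,f/g)$; it passes to $(\underline{\operatorname{F}_1}(G),\alpha^{\|f\|},\auxpol)$, i.e.\ it replaces $\alpha$ by a \emph{power} $\alpha^{\|f\|}$ determined by the largest $n$ with $f(x)\in\Z[x^n]$, and it replaces $f$ not by $f$ with one irreducible factor deleted but by the gcd of the \emph{closed divisors} of $\auxpolg$, a much more constrained object (Definitions~\ref{Definition_dhg||}--\ref{Definition_Delta}). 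The passage to the power $\alpha^{\|f\|}$ is exactly what the character-space orbit argument (Claims in Section~\ref{Section_Aux}) produces: on the bottom layer the $\overline{\beta}$-orbits of character spaces have length $\geq 2$, so one only controls a partial-sum decomposition $\auxpolg_{n,i}$ and must recombine via Lemma~\ref{Lemma_RhoDef}, at the cost of the constant $\constA$ and of replacing $\alpha$ by a power. Your descent step has no analogue of this and silently assumes the classical ``divide out one prime of $n$'' promotion from orders to polynomial identities, which is precisely the content you still have to supply. A further structural mismatch: the paper must weaken from ordered identities to \emph{abelian identities} (and needs Lemma~\ref{Lemma_IdealOfAbelianIdentities} to make those an ideal) in order to make the induction close; your outline keeps ordered identities throughout, which will not survive passage to sections in the way required. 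Finally, part~(a) in your outline is only a gesture; the paper does not prove~(a) either, and your ``bounded number of bad layers'' claim is not justified. In short: the proposal identifies the right combinatorial target (height bounded by the number of irreducible factors) but does not supply the key descent mechanism, and the mechanism the paper actually uses is different in kind (powers of the automorphism, closed divisors, abelian identities) and is known to terminate only on a proper subfamily of polynomials.
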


The classic theorems outlined above settle the conjecture for all polynomials of the form $-1 + x^n$, for all constant polynomials $n$, for all polynomials of the form $-n + x$, and for all polynomials of the form $(x^n-1)/(x-1)$. Moreover, in \cite{MoensIdentitiesGroups}, we settled claim (b) of the conjecture for all {irreducible} polynomials. The results in \cite{MoensIdentitiesGroups} also extend the already-mentioned results of Alperin \cite{AlperinPowerAutomorphism}, Thompson \cite{ThompsonFixPointFreeAutomorphisms}, Higman \cite{HigmanGroupsAndRings}, Kreknin---Kostrikin \cite{KrekninKostrikinRegularAutomorphisms}, Hughes---Thompson \cite{HughesThompson}, Kegel \cite{KegelSplit}, and Khukhro \cite{KhukhroSplitPrimeFiniteGroup} in various ways.

\paragraph{Main result.} In this paper, we will confirm claim (b) of the conjecture for the family of all \emph{Higman-solvable} polynomials. These polynomials are formally introduced in Definition \ref{Definition_HigmanSolvable} and we postpone the technical details to Section \ref{Section_Invariants}. Informally, a polynomial $f(x) \in \Z[x]$ is said to be Higman-solvable if $f(0) \cdot f(1) \neq 0$ and if $f(x)$ admits a decomposition of the form $f(x) = g_1(x^{n_1 \cdots n_c}) \cdots g_{c-1}(x^{n_{c-1} \cdot n_c}) \cdot g_c(x^{n_c})$ with some additional conditions the polynomials $g_i(x)$. The following proposition shows that these conditions are satisfied \emph{with high probability}, in the sense of probabilistic Galois theory. \\

We recall that the \emph{height} of a polynomial is the maximal modulus of its coefficients. 

\begin{proposition} \label{Proposition_Generic}
	Fix $c,d_1,n_1, \ldots, d_c,n_c \in \Z_{\geq 1}$. For each $h \in \Z_{\geq 1}$, we consider the $c$-tuples $(g_1(x),\ldots,g_c(x))$ of monic polynomials in $\Z[x]$ with prescribed degrees $\deg(g_1(x)) = d_1 , \ldots , \deg(g_c(x)) = d_c$ and heights at most $h$. Let $P(h)$ be the probability that the product \begin{equation*}
		f(x) = g_1(x^{n_1 \cdots n_c}) \cdots g_{c-1}(x^{n_{c-1} \cdot n_c}) \cdot g_c(x^{n_c}) \label{Eq_decomp}
	\end{equation*} is Higman-solvable. Then $\lim_{h \rightarrow + \infty} P(h) = 1$.
\end{proposition}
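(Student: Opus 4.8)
The plan is to unwind Definition~\ref{Definition_HigmanSolvable} into finitely many conditions on the tuple $(g_1(x),\dots,g_c(x))$, to show that each such condition fails with probability $O(h^{-\delta})$ for some $\delta>0$ depending only on $c,d_1,n_1,\dots,d_c,n_c$, and then to conclude by a union bound that $1-P(h)=O(h^{-\delta})\to 0$. By construction $f(x)$ already has the prescribed shape $f(x)=g_1(x^{n_1\cdots n_c})\cdots g_c(x^{n_c})$, so only the auxiliary requirements of Higman-solvability are at stake. These split into: (i) the non-vanishing conditions $f(0)\neq 0$ and $f(1)\neq 0$; and (ii) the genericity conditions on the $g_i$ isolated in Section~\ref{Section_Invariants} --- essentially separability of each $g_i$, maximality of the Galois groups of the factors $g_i(x^{n_i\cdots n_c})$ (so that each is as large as probabilistic Galois theory permits, the relevant wreath-type group), and linear disjointness of the corresponding splitting fields.

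For (i): whenever $d_i\geq 1$ the constant term of the monic polynomial $g_i$ is a free coefficient, so for each choice of the remaining coefficients there is at most one value of it with $g_i(0)=0$; hence $\Pr[g_i(0)=0]\leq(2h+1)^{-1}$, and similarly $\Pr[g_i(1)=0]\leq(2h+1)^{-1}$, since $g_i(1)$ depends nontrivially and linearly on each coefficient. As $f(0)=\prod_i g_i(0)$ and $f(1)=\prod_i g_i(1)$, a union bound shows (i) fails with probability $O(h^{-1})$ (the case $d_i=0$, where $g_i\equiv 1$, being vacuous).

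For (ii): the van der Waerden--Gallagher estimate --- in the effective forms due to Gallagher, or in Serre's quantitative Hilbert irreducibility theorem --- bounds the number of monic integer polynomials of degree $D$ and height $\leq h$ whose Galois group is not the full symmetric group $S_D$ by $O_D(h^{D-\delta_0})$ for some absolute $\delta_0>0$; dividing by $(2h+1)^{d_i}$ and transporting along the injection $g_i\mapsto g_i(x^{n_i\cdots n_c})$ shows that each factor fails to have maximal Galois group with probability $O(h^{-\delta_0})$. For the linear-disjointness condition I would describe the degenerate locus --- where the compositum of the splitting fields of the $g_i(x^{\cdots})$ is strictly smaller than the direct product --- as a thin subset of $\bA^{d_1+\cdots+d_c}$ in Serre's sense, cut out by composing a suitable resultant- or discriminant-type polynomial with the coefficient map, and then invoke the quantitative form of Hilbert irreducibility for thin sets (Cohen, Serre), which bounds the number of integer points of height $\leq h$ in such a set by $O(h^{N-1/2}\log h)$; this gives probability $O(h^{-1/2}\log h)$. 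Altogether (ii) fails with probability $O(h^{-\delta})$ with $\delta:=\tfrac12\min(1,\delta_0)$, say, and combining with (i) yields $P(h)\to 1$.

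The step I expect to be the main obstacle is (ii): one must verify that the concrete invariants of Section~\ref{Section_Invariants} --- the combinatorial/representation-theoretic data built from the multiset of roots of $f(x)$ that enters Definition~\ref{Definition_HigmanSolvable} --- are genuinely implied by the roots being in the most generic position compatible with the decomposition (all factors with maximal Galois group, pairwise linearly disjoint splitting fields), and one must set up the thin-set description of the non-generic locus cleanly enough to apply the quantitative Hilbert irreducibility estimates. By contrast, the non-vanishing conditions of (i) and the single-factor Galois conditions are routine consequences of elementary counting and van der Waerden's theorem.
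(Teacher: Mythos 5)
There is a genuine gap, and you identify it yourself. Your argument establishes that a certain list of "generic position" conditions on $(g_1,\ldots,g_c)$ holds with probability tending to $1$, but it never actually proves the implication that those conditions force $f(x)$ to be Higman-solvable. You flag this as the "main obstacle," and it truly is: Higman-solvability (Definition \ref{Definition_HigmanSolvable}) requires iteratively extracting closed divisors, and the condition on closed divisors is fundamentally multiplicative (powerful divisors must divide $a(x)$; $A\cdot A\cap B\subseteq A$ for the root sets). It is not clear without a real argument that "all $g_i$ have full symmetric Galois group over $\Q$ and their splitting fields are linearly disjoint" rules out the needed multiplicative coincidences between roots of different factors, nor is it clear that these conditions are organized in the right way to make $\Delta$ strip off exactly one factor at each step. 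Without that bridge, the probability estimate is for the wrong event. Also, note that your appeal to van der Waerden directly gives the Galois group of $g_i(x)$, not of $g_i(x^{n_i\cdots n_c})$ — you gesture at "the relevant wreath-type group," but the transfer and what it buys toward the closed-divisor conditions is exactly the unproven part.

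The paper takes a more economical route and avoids this bridge entirely. It first proves a concrete sufficiency criterion (Lemma \ref{Proposition_Generate}(c)): if $a(x)$ is Higman-solvable and $b(x)$ is irreducible, indecomposable, and coprime to an explicit integer polynomial built from the companion operator of $a(x)$ (namely $x(x-1)\chi_{C\otimes C}(x)\prod_i\chi_{C^i}(x^i)$), then $a(x)b(x)$ is Higman-solvable. Proposition \ref{Proposition_Generic_Detailed} then does a clean induction on $c$: peel off $g_c(x)$, apply the criterion to $a(x)=g_1(x^{n_1\cdots n_{c-1}})\cdots g_{c-1}(x^{n_{c-1}})$ and $b(x)=g_c(x)$, and bound the exceptional set. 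The exceptions are (a) $a(x)$ not Higman-solvable of the right length (handled by induction), (b) $b(x)$ reducible or decomposable (handled by van der Waerden/Theorem \ref{Theorem_vanderWaerden}, since $S_d$ implies both irreducible and primitive, hence indecomposable), and (c) $b(x)$ dividing a fixed nonzero polynomial (trivially $o(h^d)$). In particular the paper never needs linear disjointness of splitting fields, thin-set machinery, or quantitative Hilbert irreducibility: the non-generic locus is cut out by divisibility by a single explicit polynomial, not by a thin set. The detailed version also pins down the exact Higman length $1+k$, which your route does not address. If you want to complete your argument, the piece you must supply is precisely the content of Lemma \ref{Proposition_Generate}(c) — a verifiable condition on $b(x)$, in terms of $a(x)$, under which the product remains Higman-solvable — and then your counting would go through; the thin-set apparatus would be overkill once you have it.
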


We now turn to our main result. For a given Higman-solvable polynomial $f(x) \in \Z[x]$, we let $\HI(f(x))$ be the non-zero, integer invariant of Definition \ref{Definition_MainInvariant}. This $\HI(f(x))$ will play the role of the invariant $\operatorname{k}(f(x))$ of Conjecture \ref{Conjecture_Main}. We also let $\irred(f(x))$ be the number of irreducible factors of $f(x)$ of positive degree.

\begin{theorem} \label{Theorem_Main}
	Let $G$ be a finite (solvable) group admitting a fixed-point-free automorphism $\alpha$ and let $f(x)$ be any ordered identity of $\alpha$ that is Higman-solvable. If $\gcd(|G|,\HI(f(x))) = 1$, then the Fitting height of $G$ is at most $\irred(f(x)).$
\end{theorem}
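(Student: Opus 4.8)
The overall strategy is induction on the number of irreducible factors $\irred(f(x))$, using the Higman-solvability decomposition $f(x) = g_1(x^{n_1 \cdots n_c}) \cdots g_c(x^{n_c})$ to peel off one "layer" of the Fitting series at a time. The base case should be a reduction to the irreducible situation already handled in \cite{MoensIdentitiesGroups}: if $f(x)$ is (up to units and a power of $x$, which is excluded by $f(0) \neq 0$) a prime power of an irreducible polynomial, the bound follows from claim (b) of Conjecture \ref{Conjecture_Main} for irreducible polynomials, provided we have arranged $\HI(f(x))$ to be divisible by the corresponding invariant $\operatorname{k}$ of that irreducible factor. So the first step is to record the closure properties of the invariant $\HI$ — that $\HI(f(x))$ is a common multiple of the "irreducible" invariants of the factors of $f(x)$ and of the various resultants and discriminants controlling how the layers interact — so that the coprimality hypothesis $\gcd(|G|,\HI(f(x))) = 1$ passes down to every subgroup and quotient that arises in the induction.

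\smallskip

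For the inductive step, assume the Fitting height $h := h(G)$ is at least $2$ and let $F = F(G)$ be the Fitting subgroup and $\overline{G} = G/F$. The automorphism $\alpha$ acts fixed-point-freely on $G$, hence on $\overline{G}$, and $f(x)$ is still an ordered identity of the induced automorphism $\overline{\alpha}$. If $h(\overline G) < h(G)$ were automatic we would be done by induction, but of course $h(\overline G) = h(G) - 1$, so the point is to show that passing to $\overline G$ lets us replace $f(x)$ by a proper factor with strictly fewer irreducible factors. The mechanism is the following: one shows that the action of $\langle \alpha \rangle$ on a chief factor inside $F$ forces a \emph{sub}identity — a divisor of $f(x)$ corresponding to the innermost layer $g_c(x^{n_c})$ — to already hold on $G/F$, by a Clifford-theoretic / representation-theoretic argument on the action of $\alpha$ on the relevant modules over the prime fields dividing $|F|$. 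Here the hypothesis $\gcd(|G|, \HI(f(x))) = 1$ is essential: it guarantees that $f(x)$ factors into coprime pieces over $\F_p$ for every prime $p \mid |G|$, that the cyclotomic-type factors behave like they do in characteristic zero, and that Maschke-type semisimplicity is available so the layers do not "mix." This is the step I expect to be the main obstacle: making precise the claim that the outermost factor of $f(x)$ is "used up" by the bottom Fitting layer and that the complementary factor is an ordered identity of $\overline\alpha$, uniformly over all primes, is exactly what the technical apparatus of Section \ref{Section_Invariants} must be built to support.

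\smallskip

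Granting that reduction, the induction closes: $\overline G$ is a finite solvable group with fixed-point-free automorphism $\overline\alpha$ admitting the ordered identity $f^\ast(x) := f(x)/\big(\text{outermost factor}\big)$, which is again Higman-solvable with $\irred(f^\ast(x)) = \irred(f(x)) - 1$ and $\HI(f^\ast(x)) \mid \HI(f(x))$, so $\gcd(|\overline G|, \HI(f^\ast(x))) = 1$; by the inductive hypothesis $h(\overline G) \le \irred(f^\ast(x)) = \irred(f(x)) - 1$, hence $h(G) = h(\overline G) + 1 \le \irred(f(x))$. It remains to verify the base case carefully — that a Higman-solvable polynomial with a single irreducible factor of positive degree yields a nilpotent group, i.e. Fitting height $\le 1$ — which is the content of the irreducible case of \cite{MoensIdentitiesGroups} once one checks that $\HI$ dominates the invariant used there, together with the elementary observation that the factor $x^{\,\mathrm{ord}}$ contributes nothing because $f(0) \neq 0$. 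Throughout, one must keep track of the fact that the hypothesis "$\gcd(|G|, \HI(f(x))) = 1$" is inherited by all subgroups, quotients, and $\alpha$-invariant sections, which is immediate, and that Higman-solvability together with the value of $\irred$ is inherited by the factors $f^\ast(x)$, which is built into Definition \ref{Definition_HigmanSolvable}.
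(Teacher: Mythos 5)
Your plan has the right general shape --- peel off one layer of the Higman decomposition at each step, reduce Fitting height by one, close by induction --- but the reduction is run in the wrong direction, and that is where the argument breaks.

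You pass from $G$ to the quotient $\overline{G} = G/F(G)$ over the (upper) Fitting subgroup and claim that the induced automorphism $\overline\alpha$ satisfies a smaller identity $f^\ast(x)$ on $\overline{G}$. But there is no mechanism that produces a smaller identity on the quotient: the full $f(x)$ remains an identity of $\overline\alpha$ on $\overline G$, and nothing in the Clifford-theoretic picture of $\alpha$ acting on a chief factor inside $F(G)$ constrains the action of $\alpha$ on $G/F(G)$. (You also conflate which factor gets removed, first saying the ``innermost layer'' $g_c(x^{n_c})$ is forced on $G/F$ and then dividing $f(x)$ by the ``outermost factor.'') What the Higman-type partial-sum argument actually yields is a constraint in the opposite direction: with $F = \overline{\operatorname{F}_1}(G)$ elementary abelian and $P$ a complement, the character spaces $V_\chi$ of $P$ on $V = F$ are permuted without fixed points by $\beta = \alpha^{\|f(x)\|}$, so for $v\in V_\chi$ the identity $\auxpolg(\beta)(v)=0$ splits as $\sum_i \auxpolg_{n,i}(\beta)(v)=0$ with each summand in a distinct $V_{\chi_i}$; hence each term vanishes, and the B\'ezout relation $\constA\cdot\auxpol(x) \in \sum_i \auxpolg_{n,i}(x)\Z[x]$ then gives $\auxpol(\beta)(V)=0$. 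That is a new identity on the \emph{Fitting subgroup} $F$, not on $G/F$. Accordingly, the paper descends through the \emph{lower} Fitting series $\underline{\operatorname{F}_1}(G)\supseteq\underline{\operatorname{F}_2}(G)\supseteq\cdots$: at stage $i$ the polynomial $\Delta^{i}(f(x))$ is an abelian identity of the power $\alpha^{\|f(x)\|\cdots\|\Delta^{i-1}(f(x))\|}$ on $\underline{\operatorname{F}_{i}}(G)$, and after $\HL(f(x))\leq\irred(f(x))$ steps the identity is a nonzero constant coprime to $|G|$, killing $\underline{\operatorname{F}_{\HL(f)}}(G)$.

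Two further gaps. First, you keep the notion of \emph{ordered} identity throughout, but ordered identities do not descend usefully through the induction; the paper deliberately weakens to \emph{abelian} identities (Lemma \ref{Lemma_IdealOfAbelianIdentities}, Lemma \ref{Lemma_AbelianPowerIdentity}), which form an ideal and pass to characteristic abelian sections, and this weakening is essential to make the inductive step close. Second, you keep the same automorphism $\alpha$ at each stage, whereas the reduction requires replacing $\alpha$ by the power $\alpha^{\|f(x)\|}$ (and further powers at subsequent stages); this is exactly what lets one substitute $x\mapsto x^{\|f\|}$ and work with $\auxpolg$ and its partial sums $\auxpolg_{n,i}$ rather than with $f$ itself. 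Without these two devices the induction does not go through, and without inverting the direction of the reduction the key Higman lemma has nothing to bite on.
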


Let us briefly discuss the assumptions. Every monic, Higman-solvable polynomial of positive degree is an ordered identity of a fixed-point-free automorphism of a finite, non-trivial group (Remark \ref{Remark_Auto}). Moreover, every fixed-point-free automorphism of a finite, non-trivial group has a monic, ordered identity that is Higman-solvable (Remark \ref{Remark_Ident}). \\ 

The strategy to prove Theorem \ref{Theorem_Main} is straight-forward. First, we replace the ordered identities with the weaker notion of \emph{abelian identities}. We then show that a natural power of the automorphism induced on the first lower Fitting subgroup $\underline{\operatorname{{F}}_{1}}(G)$ of $G$ satisfies an abelian identity with strictly fewer irreducible factors of positive degree. Our assumption on the prime divisors of $|G|$ will guarantee that the induced automorphism is still fixed-point-free. After at most $\irred(f(x))$ iterations of this argument, we arrive at an abelian identity that is constant, say $e$. Since $|G|$ is assumed to be coprime to this constant $e$, we may conclude that the $\irred(f(x))$'th lower Fitting subgroup of $G$ is the trivial group. We note that this strategy works if and only if the polynomial $f(x)$ is Higman-solvable. \\

In order to make the strategy precise, we will introduce a number of rather technical invariants of $f(x)$ in $\Z$ and $\Z[x]$. We list some of these in Table \ref{Table_Invariants} for the convenience of the reader. The last column illustrates the special case $f(x) := \Phi_p(x)$, where $p$ is a prime and where $\Phi_p(x) = (x^p - 1)/(x-1) = 1 + x + x^2 + \cdots + x^{p-1}$ is the cyclotomic polynomial that vanishes on the primitive $p$'th roots of unity. As we had observed in the examples above, this polynomial corresponds with the Frobenius conjecture. 

\begin{table}[h] \caption{Invariants of a Higman-solvable polynomial $f(x) \in \Z[x]$.} \label{Table_Invariants}
	\begin{center}
		\begin{tabular}{|c||c|c|c|}
			\hline
			\textbf{Auxiliary Invariant} & \textbf{Range} & \textbf{Definition} & $f(x) := \Phi_p(x)$ \\
			\hline \hline 
			$\auxpolg(x) ; \auxpol(x)$ & $\Z[x] \setminus \{0\}$ & \ref{Definition_dhg||}  & $\Phi_p(x) ; 1$ \\
			\hline
			$\Delta(f(x))$ & $\Z[x] \setminus \{0\}$ & \ref{Definition_Delta}  & $1$ \\ \hline 
			$\auxpoldn_n(x)$ & $\Z[x] \setminus \{0\}$ & \ref{Definition_gni_dn}  & $1$\\
			\hline
			$\auxpol^2(x)$ & $\Z[x] \setminus \{0\}$ & \ref{Definition_d2} & $1$\\  
			\hline 
			$\|f(x)\|$ & $\Z_{\geq 0}$ & \ref{Definition_dhg||}  & $1$\\
			\hline
			$\constA$ & $\Z \setminus \{0\}$ & \ref{Definition_ConstA} & $1$\\
			\hline
			$\constB$ & $\Z \setminus \{0\}$ & \ref{Definition_ConstB} & $1$\\
			\hline
			$\constC$ & $\Z \setminus \{0\}$ & \ref{Definition_ConstC}  & $1$\\
			\hline 
			$\discr{f(x)} ; \prodant{f(x)}$ & $\Z \setminus \{0\}$ & $2.1.4$ in \cite{MoensIdentitiesGroups}  & $p^{p-2} ; p^{p-1}$ \\
			\hline \hline 
			\textbf{Main Invariant} & \textbf{Range} & \textbf{Definition} & $f(x) := \Phi_p(x)$ \\
			\hline \hline 
			$\HL(f(x))$ & $\Z_{\geq 0}$ &  \ref{Definition_HigmanSolvableLength} & $1$ \\
			\hline
			$\irred(f(x))$ & $\Z_{\geq 0}$ & \ref{Definition_Irred} & $1$ \\
			\hline
			$\HI(f(x))$ & $\Z \setminus \{0\}$ & \ref{Definition_MainInvariant} & $p^{2(p-1)}$\\
			\hline
		\end{tabular}
	\end{center}
\end{table}


\paragraph{Overview.} In Section \ref{Section_Prelim}, we collect preliminary results from the literature. In Section \ref{Section_Invariants}, we define our auxiliary invariants and Higman-solvable polynomials. We also prove a more precise version of Proposition \ref{Proposition_Generic}. In Section \ref{Section_Aux} we work out the technical aspects of our strategy to prove the main theorem. In Section \ref{Section_Main}, we prove Theorem \ref{Theorem_Main} with weaker assumptions and with stronger conclusions. In Section \ref{Secition_RemarksExamples}, we illustrate our techniques with three well-chosen examples. This paper extends techniques in \cite{HigmanGroupsAndRings} and \cite{MoensIdentitiesGroups}, but we note that it can be read independently of those works.

\section{Preliminaries} \label{Section_Prelim}

\begin{theorem}[Rowley \cite{Rowley}] \label{Theorem_Rowley}
	If a finite group $G$ admits a fixed-point-free automorphism, then $G$ is solvable. 
\end{theorem}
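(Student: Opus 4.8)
The statement is Rowley's theorem \cite{Rowley}, and any proof of it must ultimately invoke the classification of the finite simple groups; the plan is to reduce the assertion to a statement about finite simple groups and then appeal to the classification. I would argue by contradiction: let $G$ be a non-solvable finite group admitting a fixed-point-free automorphism $\alpha$, with $|G|$ as small as possible. The restriction of $\alpha$ to any $\alpha$-invariant subgroup is again fixed-point-free, so minimality forces every proper $\alpha$-invariant subgroup of $G$ to be solvable. If $V$ is a nontrivial $\alpha$-invariant normal subgroup of $G$ with $\gcd(|V|,|\alpha|) = 1$, then the coprime-action lemma (valid here because, by the Feit--Thompson theorem, one of $|V|,|\alpha|$ is odd and hence one of $V,\langle\alpha\rangle$ is solvable) gives $\Fix_{G/V}(\alpha) = \Fix_G(\alpha)\cdot V/V = 1$, so $\alpha$ would induce a fixed-point-free automorphism of the smaller non-solvable group $G/V$, a contradiction; hence no such $V$ exists. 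The one remaining obstruction is a nontrivial $\alpha$-invariant normal $p$-subgroup with $p \mid |\alpha|$, which I would dispatch by Hall--Higman-type arguments.

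One concludes that $G$ is characteristically simple and non-abelian, say $G = S_1 \times \cdots \times S_k$ with the $S_i$ isomorphic non-abelian simple groups that $\alpha$ permutes. Since the product over any $\alpha$-orbit of factors is an $\alpha$-invariant subgroup, minimality forces $\alpha$ to permute the $k$ factors cyclically, and after identifying them suitably we may write $\alpha(s_1,\dots,s_k) = (\beta(s_k),s_1,\dots,s_{k-1})$ for some $\beta \in \mathrm{Aut}(S_1)$. A direct computation then gives $\Fix_G(\alpha) = \{(s,\dots,s) : \beta(s) = s\} \cong \Fix_{S_1}(\beta)$, so $\beta$ is a fixed-point-free automorphism of the non-abelian simple group $S := S_1$. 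The whole theorem is thereby reduced to: no non-abelian finite simple group admits a fixed-point-free automorphism.

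For this, if $\beta$ has prime order then Thompson's theorem \cite{ThompsonFixPointFreeAutomorphisms} makes $S$ nilpotent, which is absurd; note, however, that this does not reduce the general case, since $\Fix_S(\beta) \le \Fix_S(\beta^m)$ runs the wrong way, so every order of $\beta$ must be treated. The two principal levers are: (i) a field automorphism of $S = S(q^r)$ of order $r$ centralizes the nontrivial subfield subgroup $S(q)$; and (ii) if a prime $p$ divides $|S|$ but not $|\beta|$ and $\beta$ normalizes some $P \in \mathrm{Syl}_p(S)$, then the coprime action of $\beta$ on the $p$-group $P$ yields $\Fix_P(\beta) \ne 1$. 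Since $|\Out(S)|$ has tightly constrained prime divisors for each family of simple groups, such a prime can usually be located and a $\beta$-invariant Sylow subgroup arranged. With these tools the argument splits along the classification: for $S = A_n$ with $n \ge 5$ every automorphism is conjugation by some $\sigma \in S_n$ (the extra outer automorphism of $A_6$ being treated separately), whose fixed-point set on $A_n$ equals $C_{S_n}(\sigma) \cap A_n$ and is seen to be nontrivial by exhibiting an even permutation commuting with $\sigma$ from its cycle type; and the $26$ sporadic groups are a finite verification against their known automorphism groups and conjugacy-class data.

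The bulk of the work --- and the step I expect to be the main obstacle --- is the groups of Lie type: one must show that every inner-diagonal, field, graph, or graph-field automorphism of each classical and exceptional group has a nontrivial fixed point. Here one exploits the description of $\mathrm{Aut}(S)$ as the inner-diagonal automorphisms extended by field and graph automorphisms, together with the structure of maximal tori and of centralizers of semisimple and unipotent elements, either to locate a fixed point directly or to put oneself in a position to apply the Sylow argument above, while separately eliminating the finitely many small-rank and small-field cases in which these generic arguments degenerate. This case analysis is carried out in detail in Rowley's paper \cite{Rowley}, and I would organize the proof around the division into cases given there.
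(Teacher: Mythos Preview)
The paper does not prove Theorem~\ref{Theorem_Rowley}; it is quoted in Section~\ref{Section_Prelim} as a preliminary result from the literature, with a bare citation to \cite{Rowley} and no argument. So there is no ``paper's own proof'' to compare your proposal against: the authors treat Rowley's theorem as a black box and only invoke it once, in the proof of Theorem~\ref{Theorem_DetailedMain}, to justify that $G$ is solvable.

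That said, your sketch is broadly faithful to how the actual argument in \cite{Rowley} runs (reduction to a minimal counterexample, then to a single non-abelian simple factor, then a case analysis over the classification). One comment on your reduction step: the coprimality hypothesis $\gcd(|V|,|\alpha|)=1$ and the appeal to Hall--Higman are unnecessary. For \emph{any} $\alpha$-invariant normal subgroup $N$ of a finite group $G$ with $\alpha$ fixed-point-free, the induced automorphism on $G/N$ is again fixed-point-free: the map $n\mapsto n^{-1}\alpha(n)$ is injective on $N$, hence surjective, so if $x^{-1}\alpha(x)\in N$ then $x^{-1}\alpha(x)=n^{-1}\alpha(n)$ for some $n\in N$, whence $xn^{-1}$ is fixed and $x\in N$. (This is exactly the content of Lemma~\ref{Lemma_UniformHallQuotient}(i) and (iii), which holds without the solvability hypothesis.) With this in hand your reduction to a characteristically simple $G$ goes through cleanly, without splitting into coprime and non-coprime cases.
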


\begin{lemma}[{\cite[Proposition~3.1.3]{MoensIdentitiesGroups}}] \label{Lemma_IdealOfIdentities}
	Let $G$ be a group with an automorphism $\alpha$. Then the identities of $\alpha$ form an ideal of $\Z[x]$.
\end{lemma}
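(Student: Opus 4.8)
The plan is to verify directly the defining properties of an ideal of $\Z[x]$ for the set $\mathcal{I}$ of identities of $\alpha$: (i) $0\in\mathcal{I}$, which is immediate; (ii) $\mathcal{I}$ is closed under addition; (iii) $\mathcal{I}$ is closed under multiplication by $x$; and (iv) $\mathcal{I}$ is closed under multiplication by each integer $n$. Properties (iii) and (iv) together with (ii) give closure under multiplication by an arbitrary $p(x)=\sum_i c_i x^i\in\Z[x]$, and (ii) together with (iv) for $n=-1$ shows $\mathcal{I}$ is an additive subgroup; so the only substantive point is additivity, and (iii)--(iv) reduce to elementary "shift" identities for the evaluation map $f\mapsto f(\alpha)$.

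I would dispatch the two routine closures first. For (iii): the coefficient sequence of $x f(x)$ is that of $f(x)$ shifted up by one index, so $(x f)(\alpha)(h)=f(\alpha)(\alpha(h))$, i.e.\ $(xf)(\alpha)=f(\alpha)\circ\alpha$; since $\alpha$ maps $G$ into $G$, triviality of $f(\alpha)$ forces triviality of $(xf)(\alpha)$, hence $xf\in\mathcal{I}$ whenever $f\in\mathcal{I}$. For (iv): multiplying every coefficient $a_i$ by $n$ has the same effect on $f(\alpha)(h)$ as replacing $h$ by $h^n$, because each power $\alpha^i$ is an endomorphism and so $\alpha^i(h^n)=\alpha^i(h)^n$; thus $(nf)(\alpha)(h)=f(\alpha)(h^n)$, and $f\in\mathcal{I}$ gives $nf\in\mathcal{I}$.

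The point that needs care — and where the \emph{abelian} character of the relevant notion of identity is essential — is additivity (ii). For the literal \emph{ordered} identities of the introduction this closure can fail on a non-abelian group: on $S_3$ with $\alpha$ equal to conjugation by a transposition one has $\alpha^2=\operatorname{id}$, so $1-x^2$ and $x-x^3$ are ordered identities, whereas $(1+x-x^2-x^3)(\alpha)(h)=[h,\alpha(h)]$ is not identically trivial. The clean way to organise the correct statement is to record that $f$ is an identity of $\alpha$ precisely when $f$ annihilates every $\alpha$-invariant abelian section $A$ of $G$, viewed as a $\Z[x]$-module with $x$ acting by the automorphism induced by $\alpha$; on such an $A$ the "ordered product" defining $f(\alpha)$ degenerates into the module action $f\cdot v=\sum_i a_i\,\alpha^i(v)$, which is manifestly additive in $f$. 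Hence $\mathcal{I}=\bigcap_A \Ann_{\Z[x]}(A)$ is an intersection of annihilator ideals, and an intersection of ideals is an ideal; this also re-proves (iii) and (iv). I expect the main obstacle to be conceptual rather than computational: one must recognise that the result fails for the naive ordered identities and that one has to pass to the abelian sections — equivalently, to the module on which $f(\alpha)$ genuinely acts linearly — after which every step is bookkeeping, and even if one prefers an elementwise definition the only nontrivial verification is additivity, settled by the same passage to an abelian quotient, where $f(x)v+g(x)v=(f+g)(x)v$.
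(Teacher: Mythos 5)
Your verifications of closure under multiplication by $x$ and by integers are correct, and your $S_3$ example does show that \emph{ordered} identities need not be closed under addition. The gap is in the resolution: you assert that $f(x)$ is an identity of $\alpha$ precisely when $f(x)$ annihilates every $\alpha$-invariant abelian section of $G$, and then read off the ideal property as an intersection of annihilators. That biconditional is false, and it is not the definition the paper uses. The paper distinguishes three nested notions --- ordered identity, identity, abelian identity --- with strict implications in general, and the characterization you propose is the definition of the \emph{weakest} one, ``abelian identity.'' For instance, if $G$ is a non-abelian simple group and $\alpha=\mathrm{id}$, the only abelian characteristic section is trivial, so \emph{every} polynomial is an abelian identity of $\alpha$; yet the constant polynomial $1$ is not an identity of $\alpha$ unless $G=\{1\}$. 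What you have proved is Lemma~\ref{Lemma_IdealOfAbelianIdentities}, which the paper states and proves separately, not Lemma~\ref{Lemma_IdealOfIdentities}.

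The paper's notion of ``identity,'' given in Section~\ref{Section_Aux}, is the decomposition one: $f(x)$ is an identity of $\alpha$ if there exist $h_1(x),\dots,h_k(x)\in\Z[x]$ with $f=h_1+\cdots+h_k$ such that $g\mapsto h_1(\alpha)(g)\cdots h_k(\alpha)(g)$ vanishes identically. With this definition, the step you flagged as the crux --- additivity --- becomes the trivial one: if $f=h_1+\cdots+h_k$ and $f'=h'_1+\cdots+h'_m$ are witnessed as above, then the concatenated list $h_1,\dots,h_k,h'_1,\dots,h'_m$ witnesses $f+f'$, since the two ordered products are each $1$ and hence so is their product. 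Closure under multiplication by $x^j$ follows by multiplying each $h_i$ by $x^j$, which pre-composes the ordered product with the surjection $\alpha^j$; closure under $-1$ follows by your substitution $g\mapsto g^{-1}$ applied inside each factor; closure under positive integers follows by repeating the decomposition; and combining these gives multiplication by arbitrary $\sum_j c_j x^j$. So the substantive idea is choosing a definition of identity that permits re-decomposition, not passing to abelian sections, and your repair, while correct in isolation, proves a different statement.
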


\begin{lemma}[{\cite[Lemma~4.1.1]{MoensIdentitiesGroups}}] \label{Lemma_UniformHallQuotient}
	Let $G$ be a finite (solvable) group and let $\map{\alpha}{G}{G}$ be a fixed-point-free automorphism. $(i.)$ The map $\mapl{-1+\alpha}{G}{G}{g}{g^{-1} \cdot \alpha(g)}$ is bijective. $(ii.)$ If $H$ is a Hall-subgroup of $G$, then some conjugate $K$ of $H$ satisfies $\alpha(K) = K$. $(iii.)$ If $N$ is a normal subgroup of $G$ with $\alpha(N) = N$, then the induced automorphism $\map{\overline{\alpha}}{G/N}{G/N}$ is also fixed-point-free.
\end{lemma}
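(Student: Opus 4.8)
\emph{Plan.} All three statements are standard facts about fixed-point-free automorphisms, and the plan is to isolate the one elementary observation on which everything rests. Call it $(\ast)$: if $\alpha$ is fixed-point-free on $G$ and $a,b\in G$ satisfy $a^{-1}\alpha(a)=b^{-1}\alpha(b)$, then $ba^{-1}=\alpha(ba^{-1})$, so $ba^{-1}$ is a fixed point of $\alpha$, whence $a=b$; running the same computation with the two factors interchanged shows that $a\alpha(a)^{-1}=b\alpha(b)^{-1}$ likewise forces $a=b$. Granting $(\ast)$, part $(i.)$ is immediate: the map $-1+\alpha\colon g\mapsto g^{-1}\alpha(g)$ is injective, hence (as $G$ is finite) bijective; and by the second half of $(\ast)$ the ``twisted'' map $g\mapsto g\alpha(g)^{-1}$ is a bijection of $G$ as well, which I will invoke in $(ii.)$.

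For $(ii.)$ I would argue as follows. Let $\pi$ be the set of primes for which $H$ is a Hall $\pi$-subgroup. Since an automorphism preserves both $|H|$ and $[G:H]$, the subgroup $\alpha(H)$ is again a Hall $\pi$-subgroup; and $G$ is solvable by Theorem~\ref{Theorem_Rowley}, so Hall's theorem on the conjugacy of Hall subgroups in a finite solvable group provides an $a\in G$ with $\alpha(H)=a^{-1}Ha$. Now use part $(i.)$ to pick $x\in G$ with $x\alpha(x)^{-1}=a$, i.e.\ $a\alpha(x)=x$, and set $K:=x^{-1}Hx$. Then
\[ \alpha(K)=\alpha(x)^{-1}\alpha(H)\alpha(x)=\alpha(x)^{-1}a^{-1}Ha\alpha(x)=\bigl(a\alpha(x)\bigr)^{-1}H\bigl(a\alpha(x)\bigr)=x^{-1}Hx=K, \]
so $K$ is an $\alpha$-invariant conjugate of $H$. (One could equally run a Frattini argument in $G\rtimes\langle\alpha\rangle$, but the bijectivity of $g\mapsto g\alpha(g)^{-1}$ makes that detour unnecessary.)

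For $(iii.)$: because $\alpha(N)=N$, the restriction $\alpha|_N$ is an automorphism of the finite group $N$, and it is fixed-point-free since a fixed point of $\alpha|_N$ is a fixed point of $\alpha$. Suppose $gN$ is fixed by the induced automorphism $\overline{\alpha}$, that is, $g^{-1}\alpha(g)\in N$. Applying $(i.)$ to $(N,\alpha|_N)$, there is $n\in N$ with $n^{-1}\alpha(n)=g^{-1}\alpha(g)$; the injectivity half of $(\ast)$ on $G$ then gives $g=n\in N$, so $gN=N$ and $\overline{\alpha}$ is fixed-point-free. The only genuine idea in the whole proof is the one used in $(ii.)$ — that fixed-point-freeness turns $g\mapsto g\alpha(g)^{-1}$ into a surjection, which is exactly what upgrades ``$\alpha(H)$ is conjugate to $H$'' into ``some conjugate of $H$ is $\alpha$-stable'' — so I do not anticipate any real obstacle here.
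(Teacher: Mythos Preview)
Your proof is correct; each of the three parts is handled by the standard argument, and the computations check out. Note, however, that the paper does not actually prove this lemma: it is stated in the preliminaries as a citation of \cite[Lemma~4.1.1]{MoensIdentitiesGroups}, so there is no in-paper proof to compare against.
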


\begin{definition}[{\cite[Definition~2.1]{MoensAF}}]
	Let $X$ be a subset of the group $G$. We say that $X$ is \emph{arithmetically-free} if $X$ contains no subset of the form  $\{ b, a , a \cdot b , a \cdot b^2 , a \cdot b^{3} , \cdots \}$. We say that $X$ is \emph{product-free} if it does not contain a subset of the form $\{ b , a , a \cdot b\}$.
\end{definition}

\begin{theorem}[Moens {\cite[Theorem~3.14]{MoensAF}}] \label{Theorem_AF}
	Let $A$ be a group and let $X$ be a finite, arithmetically-free subset of $A$. Then there exists a minimal $\operatorname{H}(X,A) \in \Z_{\geq 0}$ with the following property. If $L$ is a Lie ring that is graded by $A$ and supported by $X$, then $L$ is nilpotent of class at most $\operatorname{H}(X,A)$.
\end{theorem}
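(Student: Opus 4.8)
The plan is to recognize Theorem~\ref{Theorem_AF} as a common generalization of Higman's theorem — the case $A = \Z/p\Z$, $X = (\Z/p\Z)\setminus\{0\}$ with $p$ prime, which is precisely the situation in which that $X$ is arithmetically-free — and to prove it by identifying $\operatorname{H}(X,A)$ with the nilpotency class of one ``universal'' object. Concretely, I would fix countably many symbols $s^{(a)}_i$ ($a \in X$, $i \geq 1$), form the free Lie ring on all of them, declare $s^{(a)}_i$ homogeneous of degree $a$, and let $L^\ast$ be the quotient of this graded free Lie ring by the ideal generated by all of its homogeneous components of degree outside $X$. Then $L^\ast$ is itself graded by $A$ and supported by $X$; and any commutator relation that holds in $L^\ast$ holds in every Lie ring $L$ with those two properties, by sending the $s^{(a)}_i$ to the (finitely many) homogeneous elements of $L$ occurring in the relation. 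Hence the theorem is equivalent to the single assertion that $L^\ast$ is nilpotent, and one sets $\operatorname{H}(X,A)$ equal to its class — automatically the minimal valid bound, since $L^\ast$ attains it.

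To show that $L^\ast$ is nilpotent I would analyze left-normed commutators $c = [\,y_1, y_2, \dots, y_n\,]$ on the generators, with $y_k$ of degree $x_k \in X$; such a $c$ is homogeneous of degree $x_1 x_2 \cdots x_n$, and it vanishes in $L^\ast$ as soon as some prefix product $x_1 \cdots x_k$ lies outside $X$, since then already the sub-commutator $[\,y_1, \dots, y_k\,]$ lies in a killed component. Call $(x_1, \dots, x_n)$ \emph{admissible} if all of its prefix products stay in $X$. The subtlety — and the reason the theorem is not a one-line counting argument — is that admissible sequences of unbounded length genuinely exist in general (for $A = \Z/5\Z$ one can walk forever inside $\{1,2,3,4\}$, since from any nonzero residue every other nonzero residue is one step away), so $n$ cannot simply be bounded; one must instead show that a long admissible left-normed commutator is nonetheless a $\Z$-linear combination of left-normed commutators each of which has a non-admissible prefix, by repeated application of the Jacobi identity in the form $[\dots, u, v, \dots] = [\dots, v, u, \dots] + [\dots, [u,v], \dots]$, which both shortens the commutator and can drive a prefix into a forbidden degree. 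This is exactly where the arithmetically-free hypothesis — no $\{\, b, a, ab, ab^2, ab^3, \dots \,\} \subseteq X$ — is used: it is the combinatorial content that Higman's proof extracts from the primality of $p$ (there it says a nonzero element of $\Z/p\Z$ generates everything), and it is precisely what forbids the ``self-sustaining'' repetition patterns among admissible sequences that would otherwise survive the rewriting and obstruct nilpotency.

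I expect the combinatorial rewriting lemma to be the main obstacle: one must produce a finite $n_0 = n_0(X,A)$ such that every left-normed commutator of length at least $n_0$ on homogeneous generators reduces, modulo the Lie-ring axioms, to a combination of commutators having a non-admissible prefix — equivalently, is zero in $L^\ast$. Keeping track of the cascade of ``shuffle'' terms produced by each Jacobi step, and checking that the arithmetically-free condition forces one of them into a bad degree before the induction runs out of room, is the technical heart of the argument and is in essence Higman's theorem in disguise. Establishing only the \emph{existence} of $\operatorname{H}(X,A)$ is already this lemma; extracting an explicit value, as in the Kreknin--Kostrikin refinement of Higman's theorem, would demand a considerably more careful inductive setup, but the statement asks only for finiteness.
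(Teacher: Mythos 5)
This theorem is quoted from \cite{MoensAF} and not reproved here, so there is no internal proof in the present paper to compare against; I assess your argument on its own terms. The reduction to the universal object $L^\ast$ is a legitimate and clean observation: for any graded Lie ring $L$ supported by $X$ and any nonzero left-normed $n$-fold commutator of homogeneous elements $y_1,\ldots,y_n$ in $L$, the assignment $s^{(a_i)}_i \mapsto y_i$ extends (by freeness of the free graded Lie ring, and since $L$ kills the degrees outside $X$) to a graded homomorphism $L^\ast \to L$ under which $[s^{(a_1)}_1,\ldots,s^{(a_n)}_n]$ maps onto $[y_1,\ldots,y_n] \neq 0$; hence $c(L) \leq c(L^\ast)$ for every such $L$, and $L^\ast$ realizes the bound. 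So $\operatorname{H}(X,A) = c(L^\ast)$, \emph{provided} $L^\ast$ is nilpotent.

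That proviso is where your proposal stops, and it is precisely the content of the theorem. You explicitly defer the step that matters: ``the combinatorial rewriting lemma \ldots is the technical heart of the argument and is in essence Higman's theorem in disguise.'' Correct — and that means you have reformulated the statement, not proved it. The sketch of ``repeated Jacobi rewriting drives some prefix product into a degree outside $X$'' is not a mechanism that is guaranteed to terminate or even to make monotone progress: each Jacobi step trades one long admissible commutator for two others, one of the same length with a transposed pair and one shorter but with a merged entry, and you give no induction invariant that forces all branches of this cascade to eventually hit a forbidden degree. Higman's treatment of the case $A = \Z/p\Z$, $X = (\Z/p\Z)\setminus\{0\}$ is already a nontrivial global argument rather than a local normal-form computation, and the general arithmetically-free case of \cite{MoensAF} is harder still. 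You have correctly isolated where the difficulty lies and correctly identified arithmetic-freeness as the hypothesis that must be spent, but you have not produced the lemma that spends it; without it, the existence of $\operatorname{H}(X,A)$ remains unestablished.
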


\begin{example}
	The finite subset $X$ of $A$ is product-free if and only if $\operatorname{H}(X,A) \leq 1$.
\end{example}

\begin{theorem}[Moens {\cite[Theorem~4.3.1]{MoensIdentitiesGroups}}] \label{Theorem_BoundField}
	Let $X$ be a finite, arithmetically-free subset of the multiplicative group $(\F^\times,\cdot)$ of a field $\F$. Then $\operatorname{H}(X,\F^\times) \leq |X|^{2^{|X|}}$.
\end{theorem}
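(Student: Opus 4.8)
The plan is to reproduce, in the arithmetically-free setting, the proof of the Kreknin--Kostrikin theorem on regular automorphisms of prime order, whose bound $(n-1)^{2^{n-1}}$ is precisely the special case of the present statement in which $X$ is the set of non-trivial $n$-th roots of unity. Write $m := |X|$. By Theorem~\ref{Theorem_AF} the integer $\operatorname{H}(X, \F^\times)$ is already known to be finite, so only the explicit estimate is at stake. Since the nilpotency class of a Lie ring graded by $\F^\times$ and supported on $X$ depends only on the partial multiplication that $X$ inherits from $\F^\times$, we may --- after the routine (in positive characteristic slightly delicate) passage to a torsion-free universal object --- assume we are dealing with a Lie \emph{algebra} $L = \bigoplus_{\chi \in \F^\times} L_\chi$ over $\F$ with $L_\chi = 0$ for $\chi \notin X$ and $[L_\chi, L_\psi] \subseteq L_{\chi \psi}$. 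The goal is then: $L$ is nilpotent of class at most $m^{2^m}$.

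Working over the field $\F$ buys us the fixed-point-free ``grading automorphism'' $\phi \in \operatorname{Aut}_\F(L)$ given by $\phi(v) = \chi \cdot v$ for $v \in L_\chi$: it respects the bracket because the grading group is $\F^\times$ itself, and it has no non-zero fixed vector because arithmetic-freeness forces $1 \notin X$ while $\chi - 1 \in \F^\times$ for every $\chi \in X$. Its eigenvalues are exactly the elements of $X$. The subtlety is that $\phi$ need not have finite order, so Higman's, Kreknin's and Kostrikin's theorems cannot be invoked directly; instead one re-runs their arguments with the combinatorics of a cyclic grading replaced by the hypothesis that $X$ contains no subset of the form $\{b, a, a\cdot b, a\cdot b^2, \dots\}$. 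I would organise this into two steps. \emph{(i) Solvability.} In the free Lie algebra graded by $\langle X \rangle$ and supported on $X$, show that some fixed derived ideal $L^{(k)}$, with $k$ an explicit function of $m$ (which I expect to evaluate to $2^m$), vanishes identically; this is where arithmetic-freeness does the work that the pigeonhole principle on partial products of grading shifts does in Kreknin's original proof. Consequently every such $L$ is solvable, of derived length $d$ bounded in terms of $m$. \emph{(ii) Nilpotency.} Induct on $d$. If $d \le 1$ then $L$ is abelian. If $d \ge 2$, put $A := L^{(d-1)}$, a $\phi$-invariant abelian graded ideal, and apply the inductive hypothesis to $L/A$, which has derived length $d-1$ and is graded by $\F^\times$ and supported on a subset of $X$, obtaining a class bound $c$ for $L/A$. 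Viewing $A$ as a graded module over the nilpotent algebra $L/A$ and following each $X$-weight through the iterated adjoint action --- a Higman-type analysis --- one bounds the least $r$ with $[A, \underbrace{L, \dots, L}_{r}] = 0$ in terms of $c$ and $m$, hence the class of $L$ by an explicit function of $d$ and $m$; substituting the solvability bound for $d$ and calibrating the estimates so as to recover the classical constant in the root-of-unity case yields $\operatorname{H}(X, \F^\times) \le m^{2^m}$.

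The hard part is the sharp bookkeeping in both steps: one must pin down exactly where the Kreknin--Higman--Kostrikin machinery uses the cyclic structure of $\Z/p$ and verify that ``arithmetically-free'' is strong enough to stand in for it, and then control the constants so that the solvability bound ($\approx 2^m$) and the per-derived-layer growth of the nilpotency class ($\approx$ a factor $m$) multiply out to the sharp $m^{2^m}$ rather than to a double exponential. A lesser, more routine, difficulty is making the reduction from Lie rings to Lie algebras over $\F$ watertight in positive characteristic, where tensoring with $\F$ can lose information.
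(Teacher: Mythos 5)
This theorem is only cited in the present paper (from \cite{MoensIdentitiesGroups}) and is not proved here, so there is no in-paper proof against which to check your attempt. Taken on its own merits, what you have written is a roadmap, not a proof. The strategy you outline --- a Kreknin-type argument giving a derived-length bound of shape $2^m$, followed by a Higman-type induction on derived length to upgrade it to the nilpotency bound $m^{2^m}$ --- is the right template, and your observation that arithmetic-freeness of $X$ is the correct surrogate for the cyclic-prime-order structure of the classical case is correct (the nontrivial $p$-th roots of unity form an arithmetically-free subset of $\overline{\Q}^\times$ of size $p-1$, and the bound then recovers $(p-1)^{2^{p-1}}$). But both places where the mathematics must actually happen --- the solvability estimate in step (i), and the per-layer increment of the nilpotency class in step (ii) --- are announced rather than proved, and you explicitly defer them as ``the hard part.'' That hard part \emph{is} the theorem: what is missing is the combinatorial lemma, using only that $X$ contains no set of the form $\{b,a,ab,ab^2,\ldots\}$, that every left-normed bracket of length exceeding $m^{2^m}$ in the universal $\langle X\rangle$-graded Lie ring can be rewritten so that each monomial has an initial segment whose partial weight-product falls outside $X$ and hence vanishes, together with the bookkeeping that produces the exact exponent.

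Two further concrete concerns with the route you sketch. First, introducing the grading automorphism $\phi$ and then passing to Lie algebras over $\F$ is a detour with real technical cost rather than a convenience: $\operatorname{H}(X,\F^\times)$ is by definition a bound over Lie \emph{rings}, and if $\operatorname{char}(\F)=p>0$ then the universal $\Z$-graded object $U$ maps onto $U\otimes\F$ non-injectively, so bounding the class of $U\otimes\F$ does not bound the class of $U$. A legitimate reduction to characteristic zero exists --- the torsion subgroup of $\langle X\rangle\le\F^\times$ is cyclic, so $\langle X\rangle$ embeds as a group into $\overline{\Q}^\times$, and arithmetic-freeness and the invariant $\operatorname{H}$ transport along such an embedding --- but this has to be stated and proved, not waved past as ``slightly delicate.'' Second, once the statement is framed as being about graded Lie rings, the automorphism $\phi$ carries no information beyond the grading itself; the classical Higman--Kreknin--Kostrikin arguments are combinatorial in the weights and work over $\Z$ with no base field and no automorphism, and following that line would spare you both of the difficulties above.
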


\begin{theorem}[Moens {\cite[Theorem~4.2.2]{MoensIdentitiesGroups}}] \label{Theorem_Embedding}
	Let $L$ be a finite Lie ring with an automorphism $\beta$ with an identity $f(x) \in \Z[x]$ with root set $X$. Suppose that $\gcd(|L|,\discr{f(x)} \cdot \prodant{f(x)}) = 1$. Then there exists an embedding $\map{\varepsilon}{\discr{f(x)} \cdot L}{K}$ of the Lie ring $\discr{f(x)} \cdot L$ into a Lie ring $K$ that is graded by $(\overline{\Q}^\times,\cdot)$ and supported by $X$.
\end{theorem}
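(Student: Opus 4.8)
The plan is to realise the asserted grading as the eigenspace decomposition of the extension of $\beta$ to a ring of algebraic numbers that contains all roots of $f(x)$ and in which $\discr{f(x)}$ is invertible. Write $n := |L|$, $d := \discr{f(x)}$, $e := \prodant{f(x)}$, and let $E \subseteq \overline{\Q}$ be the splitting field of $f(x)$ over $\Q$, so that $X \subseteq E$. We may assume $f(0) \neq 0$: since $\beta$ is invertible, dividing $f(x)$ by a power of $x$ yields another identity of $\beta$ (Lemma~\ref{Lemma_IdealOfIdentities}) whose root set is $X \setminus \{0\}$, so that after this reduction $X \subseteq E^\times \subseteq \overline{\Q}^\times$, as the grading group requires.

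First I would fix the base ring. Let $\mathcal{O}$ be the integral closure of $\Z[1/e]$ in $E$; since $E/\Q$ is separable, $\mathcal{O}$ is a finitely generated, torsion-free — hence free — module of positive rank over the principal ideal domain $\Z[1/e]$, and this is one place where the precise definition of $\prodant{f(x)}$ enters: it is chosen so that every $\xi \in X$ lies in $\mathcal{O}$. Form the finite Lie ring $\hat L := L \otimes_\Z \mathcal{O}$, equipped with the automorphism $\hat\beta := \beta \otimes \operatorname{id}$, for which $f(x)$ is still an identity. Because $\gcd(n,e) = 1$ we have $L \otimes_\Z \Z[1/e] = L$, and because $\mathcal{O}$ is free of positive rank over $\Z[1/e]$, the canonical $\Z$-linear, bracket-preserving map $\map{\iota}{L}{\hat L}$ is injective; I would take $\varepsilon$ to be its restriction to $\discr{f(x)} \cdot L$ (the factor $\discr{f(x)}$ serves only to absorb the denominators of the idempotents constructed below, and is otherwise immaterial here since it is invertible modulo $n$).

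The key step is to diagonalise $\hat\beta$ on $\hat L$. Since $\hat L$ is annihilated by $n$, it is a module over $\mathcal{O}/n\mathcal{O}$, a finite ring all of whose maximal ideals lie over rational primes dividing $n$. For distinct $\xi, \xi' \in X$ the element $\xi - \xi'$ is a unit of $\mathcal{O}/n\mathcal{O}$: if it lay in a maximal ideal over $p \mid n$, then $p$ would divide the norm of $\xi - \xi'$, hence divide $d$, contradicting $\gcd(n,d) = 1$. Consequently the Lagrange idempotents $\pi_\xi := \prod_{\xi' \in X \setminus \{\xi\}} (\hat\beta - \xi')\,(\xi - \xi')^{-1}$ are well-defined, pairwise orthogonal endomorphisms of $\hat L$ summing to the identity, and they yield $\hat L = \bigoplus_{\xi \in X} K_\xi$ with $K_\xi := \pi_\xi \hat L$; when $f(x)$ is squarefree $K_\xi = \{ v \in \hat L \mid \hat\beta(v) = \xi v \}$, and in general one runs the same Chinese-remainder argument over $\mathcal{O}/n\mathcal{O}$ on the distinct irreducible factors of $f(x)$ and obtains the same direct-sum decomposition indexed by $X$, with $K_\xi$ now the generalised $\xi$-eigenspace. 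We then let $K := \hat L$ and grade it by $(\overline{\Q}^\times,\cdot)$ via $K_\lambda := 0$ for $\lambda \in \overline{\Q}^\times \setminus X$.

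It remains to check the grading axioms. The support is contained in $X$ by construction. For multiplicativity, take $a \in K_\xi$ and $b \in K_{\xi'}$; since $\hat\beta$ is a Lie ring automorphism, $\hat\beta([a,b]) = [\hat\beta(a),\hat\beta(b)] = [\xi a, \xi' b] = (\xi\xi')\,[a,b]$, so $[a,b]$ lies in the $(\xi\xi')$-eigenspace of $\hat\beta$, which is $K_{\xi\xi'}$ if $\xi\xi' \in X$ and is $\{0\}$ otherwise; in the non-squarefree case the same identity together with a short induction on filtration degree gives $(\hat\beta - \xi\xi')^N [a,b] = 0$ for $N \gg 0$, with the same conclusion. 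Hence $[K_\lambda,K_\mu] \subseteq K_{\lambda\mu}$ for all $\lambda,\mu$, and $\varepsilon$ is an embedding of Lie rings as a restriction of the injective homomorphism $\iota$. The step I expect to require the most care — and indeed the reason the auxiliary invariants are introduced at all — is pinning down $\discr{f(x)}$ and $\prodant{f(x)}$ so that they simultaneously guarantee that every root of $f(x)$ becomes integral after inverting $\prodant{f(x)}$ and that distinct roots remain distinct modulo every prime dividing $|L|$; once these two facts are secured, the eigenspace decomposition and the verification of the grading axioms are routine.
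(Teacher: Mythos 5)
The paper does not itself prove Theorem~\ref{Theorem_Embedding}; it cites it from \cite{MoensIdentitiesGroups}, so no line-by-line comparison is possible here. Your overall route --- extend scalars to $\hat L := L \otimes_{\Z} \mathcal{O}$, split $\hat L$ into generalised eigenspaces $K_\xi$ via idempotents made available by the coprimality hypothesis, and grade by eigenvalues --- is the natural one and surely matches the cited source in outline. But there is a genuine gap at the one step you dismiss as routine. For $a \in K_\xi$, $b \in K_{\xi'}$ with $\xi\xi' \notin X$, you assert that $[a,b]$ lies in the $\xi\xi'$-eigenspace of $\hat\beta$, ``which is $\{0\}$.'' Over the finite ring $\mathcal{O}/|L|\mathcal{O}$ this is not automatic: from $f(\hat\beta)[a,b]=0$ one deduces (modulo nilpotents) $f(\xi\xi')\,[a,b]=0$, and to conclude $[a,b]=0$ one needs $f(\xi\xi')$ to be a \emph{unit} modulo $|L|$, equivalently the norm of $f(\xi\xi')$ to be coprime to $|L|$. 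Controlling these norms over all products $\xi\xi'$ of roots that fall outside $X$ is precisely the role of $\prodant{f(x)}$.

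Your proposed reading of $\prodant{f(x)}$ --- that it clears the denominators of the roots --- is visibly incompatible with the paper's own Table~\ref{Table_Invariants}: for $f(x)=\Phi_p(x)$ the roots are roots of unity and hence already algebraic integers, so your reading would force $\prodant{\Phi_p(x)}=1$, whereas the table records $\prodant{\Phi_p(x)}=p^{p-1}$. That value is exactly the field norm of $\Phi_p(1)$ from $\Q(\zeta_p)$ to $\Q$, and $1=\zeta_p^a\cdot\zeta_p^{-a}$ is the unique product of two roots of $\Phi_p(x)$ that lies outside $X$ --- precisely what the interpretation above predicts. So you have correctly identified the role of $\discr{f(x)}$ (keeping distinct roots distinct modulo primes of $|L|$ so that the separating idempotents exist), but misidentified $\prodant{f(x)}$, and consequently the verification that the grading is \emph{supported by} $X$ --- which is the whole point of the theorem as it feeds into Theorem~\ref{Theorem_AF} --- is left unproved.
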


\begin{definition}
	For $d,h \in \Z_{\geq 1}$, we define $E_d(h)$ to be the number of monic polynomials in $\Z[x]$ of degree $d$ and height at most $h$ for which the Galois group is \emph{not} the full symmetric group on $d$ letters.
\end{definition}

\begin{theorem}[van der Waerden {\cite{VanDerWaerdenGalois}}] \label{Theorem_vanderWaerden}
	For each $d \in \Z_{\geq 1}$, we have $E_d(h) = o(h^d)$.
\end{theorem}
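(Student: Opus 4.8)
The plan is to show that a monic polynomial of degree $d$ and height at most $h$ can fail to have Galois group $S_d$ only when its reductions modulo primes are ``atypical'', and then to count such polynomials by a sieve. Assume $d\geq 2$, the case $d=1$ being trivial. The starting point is a group-theoretic reduction: a subgroup $G\leq S_d$ equals $S_d$ as soon as $G$ is transitive, $G$ contains a $(d-1)$-cycle, and $G$ contains a transposition. Indeed, a $(d-1)$-cycle fixes one point $\omega$ and is transitive on the remaining $d-1$ points, so the stabilizer $G_\omega$ is transitive on $\Omega\setminus\{\omega\}$ and hence $G$ is $2$-transitive; and a $2$-transitive group containing one transposition contains, after conjugation, every transposition, hence is $S_d$. (For small $d$ some of these conditions coincide and the argument degenerates gracefully.) Contrapositively, if $\operatorname{Gal}(f)\neq S_d$ then $f$ is reducible over $\Q$, or $\operatorname{Gal}(f)$ contains no $(d-1)$-cycle, or $\operatorname{Gal}(f)$ contains no transposition.

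Next I would translate each defect into a condition on $f\bmod p$ via Dedekind's theorem: if $p\nmid\operatorname{disc}(f)$, then $\operatorname{Gal}(f/\Q)$ contains a permutation whose cycle type is the degree sequence of the irreducible factorization of $f\bmod p$. If $f$ is reducible over $\Q$, then by Gauss's lemma it factors over $\Z$ into monic non-constant polynomials, so $f\bmod p$ is reducible for every $p$. If $f\bmod p$ has factorization type $(d-1,1)$ (an irreducible factor of degree $d-1$ times a distinct linear factor), then $f\bmod p$ is separable, so $p\nmid\operatorname{disc}(f)$ and $\operatorname{Gal}(f)$ contains a $(d-1)$-cycle; likewise factorization type $(2,1^{d-2})$ forces a transposition into $\operatorname{Gal}(f)$. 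Writing $\Sigma_p^{(1)},\Sigma_p^{(2)},\Sigma_p^{(3)}$ for the sets of monic degree-$d$ polynomials over $\F_p$ that are, respectively, reducible, not of type $(d-1,1)$, not of type $(2,1^{d-2})$, we obtain
\[ \{\, f : \operatorname{Gal}(f)\neq S_d \,\} \subseteq \bigcup_{i=1}^{3}\{\, f : f\bmod p\in\Sigma_p^{(i)} \text{ for every prime } p \,\}. \]
For each $i$ the density $\delta_i(p):=|\Sigma_p^{(i)}|/p^d$ is bounded away from $1$ as $p\to\infty$: Gauss's count of irreducibles gives $\delta_1(p)\to 1-1/d$, and elementary counting of the relevant factorization types gives $\delta_2(p)\to 1-1/(d-1)$ and $\delta_3(p)\to 1-1/(2(d-2)!)$.

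The last ingredient is the sieve estimate. For a finite set $\mathcal P$ of primes, the Chinese Remainder Theorem shows that as $h\to\infty$ the reductions $(f\bmod p)_{p\in\mathcal P}$ become equidistributed over the monic degree-$d$ polynomials of $\prod_{p\in\mathcal P}\F_p[x]$: for $h$ large compared with $\prod_{p\in\mathcal P}p$, each joint residue class is hit by $\frac{(2h+1)^d}{\prod_{p\in\mathcal P}p^d}(1+o(1))$ polynomials of height at most $h$. Fixing $i$ and taking $\mathcal P$ to be the primes in an interval $[P_0,Q]$ on which $\delta_i(p)\leq 1-c$ for a suitable $c=c(d)>0$, this yields $\#\{\,f:\operatorname{height}(f)\leq h,\ f\bmod p\in\Sigma_p^{(i)}\ \forall p\in\mathcal P\,\}\leq (2h+1)^d(1-c)^{\pi(Q)-\pi(P_0)}(1+o_h(1))$, hence $\limsup_{h\to\infty}\#\{\cdots\}/h^d\leq C_d\,(1-c)^{\pi(Q)-\pi(P_0)}$ for every $Q$; letting $Q\to\infty$ forces this $\limsup$ to be $0$. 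Summing the three contributions gives $E_d(h)=o(h^d)$.

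The main obstacle is this final step, specifically the interchange of limits: equidistribution is only available modulo a finite set of primes fixed before sending $h\to\infty$, so the argument produces, for each $N$, a bound of the shape $\limsup_h E_d(h)/h^d\leq C(1-c)^N$, and it is precisely the freedom to enlarge $N$ that yields $o(h^d)$. The group theory and the counting of residue classes over $\F_p$ are routine; the care all goes into arranging the sieve so that this limit interchange is legitimate (and into noting that one cannot, without more work, let $N$ grow with $h$, which is why one obtains only $o(h^d)$ rather than a power saving).
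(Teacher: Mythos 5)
The paper cites van der Waerden's theorem as a black-box preliminary (Theorem \ref{Theorem_vanderWaerden}) and gives no proof, so there is no in-paper argument to compare against; your sketch is essentially the classical sieve proof attributed to van der Waerden. The ingredients you use are the standard ones: the group-theoretic criterion that a transitive subgroup of $S_d$ containing a $(d-1)$-cycle (hence $2$-transitive, hence primitive) and a transposition must equal $S_d$; Dedekind's reduction translating the absence of these cycle types into a constraint on $f \bmod p$ for every $p$ at which $f$ is separable (and you correctly note that the relevant factorization types are automatically separable over $\mathbb{F}_p$, so the discriminant condition is built in); the density computations $\delta_1 \to 1-1/d$, $\delta_2 \to 1 - 1/(d-1)$, $\delta_3 \to 1 - 1/(2(d-2)!)$, all correct by the prime polynomial theorem and elementary counting; and the Chinese-Remainder equidistribution over a \emph{fixed} finite set of primes, with the two limits taken in the right order. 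You also correctly identify why this method yields only $o(h^d)$ rather than a power saving --- one cannot let the number of sieving primes grow with $h$ without more quantitative input, which is exactly the gap Bhargava's work closes. The sketch is sound.
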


So the number of monic polynomials of degree $d$ and height at most $h$ that are reducible or decomposable is $o(h^d)$. We refer to Bhargava's recent solution \cite{Bhargava} of the van der Waerden conjecture for an optimal estimate on the growth rate of $E_d(h)$.

\section{Higman-solvable polynomials and their properties} \label{Section_Invariants}

\subsection{Higman-solvable polynomials $f(x)$ and $\HL(f(x))$}

\begin{definition}
	Let $a(x) , b(x) \in \Z[x]$. We say that $a(x)$ is \emph{powerful} if it is in the set $\Z[x^2] \cup \Z[x^3] \cup \Z[x^4] \cdots$. And $a(x)$ is a \emph{closed divisor} of $b(x)$ if $a(x)$ is a divisor of $b(x)$ and 
	\begin{enumerate}[\rm (a)]
		\item every powerful divisor of $b(x)$ divides $a(x)$,
		\item if $A$ is the root set of $a(x)$ and $B$ is the root set of $b(x)$, then $A \cdot A \cap B \subseteq A$, and 
		\item $\gcd(a(x) , b(x) / a(x)) = 1$. 
	\end{enumerate}
\end{definition}

It is clear that the gcd of all closed divisors of $a(x)$ is again a closed divisor of $a(x)$.

\begin{definition}[$\|f(x)\| , \auxpolg(x) , \auxpol(x)$] \label{Definition_dhg||}
	Let $f(x) \in \Z[x]$. If $f(x) \in \Z$, then we define $\|f(x)\| := 0$ and $\auxpol(x) := \auxpolg(x) := f(x)$. So we assume that $f(x) \not \in \Z$. We then let $\|f(x)\|$ be the maximal $n \in \Z_{\geq 0}$ such that $f(x) \in \Z[x^n]$. Then there is a unique $\auxpolg(x) \in \Z[x]$ such that $f(x) = \auxpolg(t^{\|f(x)\|})$. We then define $\auxpol(x)$ to be the gcd of all closed divisors of $\auxpolg(x)$.
\end{definition}

So $\auxpol(x)$ is a closed divisor of $\auxpolg(x)$ and $f(x) = \auxpolg(x^{\|f(x)\|})$.

\begin{definition}[$\Delta(f(x))$] \label{Definition_Delta}
	We define the map $\mapl{\Delta}{\Z[x]}{\Z[x]}{f(x)}{\auxpol(x)}$. 
\end{definition}

\begin{definition}[Higman-solvable] \label{Definition_HigmanSolvable}
	Let $f(x) \in \Z[x]$. We say that $f(x)$ is \emph{Higman-solvable} if $f(0) \cdot f(1) \neq 0$ and $\Delta^l(f(x)) \in \Z$ for some $l \in \Z_{\geq 0}$. 
\end{definition}

\begin{example} \label{Example_Delta} If $f(x) := (x^4+3x^2+1)(x^2+1)(x+2)$, then $\Delta(f(x)) = (x^4+3x^2+1)(x^2+1)$ and $\Delta^2(f(x)) = 1$. If $f(x) := (x^4-5)(x^2-2)(x+1)$, then $\Delta^1(f(x)) = (x^4-5)(x^2-2)$ and $\Delta^2(f(x)) = x^2-5$ and $\Delta^3(f(x)) = 1$. If $f(x)$ is $ (x^2-2)(x^3-3)$, then $\Delta(f(x)) = f(x)$, so that $f(x)$ is \emph{not} Higman-solvable. If $f(x) := (x^n-1)/(x-1)$, for some composite number $n$, then $\Delta(f(x)) = f(x)$, so that $f(x)$ is \emph{not} Higman-solvable. If $f(x) := x^n-1$, then $f(1) = 0$, so that $f(x)$ is \emph{not} Higman-solvable.
 \end{example}

\begin{definition}[$\HL(f(x))$] \label{Definition_HigmanSolvableLength}
	Let $f(x) \in \Z[x]$ be Higman-solvable. We define the Higman-length $\HL(f(x))$ of $f(x)$ to be the minimal $l \in \Z_{\geq 0}$ such that $\Delta^l(f(x)) \in \Z$.
\end{definition}

\begin{definition}[$\irred(f(x))$] \label{Definition_Irred}
	For $f(x) \in \Z[x] \setminus \{0\}$, we define $\irred(f(x))$ to be the number of irreducible factors of $f(x)$ of positive degree (counted with multiplicity).
\end{definition}

\begin{proposition} \label{Proposition_BoundHigmanLEngth}
	If $f(x) \in \Z[x]$ is Higman-solvable, then $\HL(f(x)) \leq \irred (f(x))$.
\end{proposition}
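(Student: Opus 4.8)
The plan is to show that each application of the operator $\Delta$ removes at least one irreducible factor of positive degree, so that after $\irred(f(x))$ steps we must have reached a constant. More precisely, I would prove by induction on $\HL(f(x))$ that if $f(x)$ is Higman-solvable and $\HL(f(x)) = l \geq 1$, then $\irred(\Delta(f(x))) \leq \irred(f(x)) - 1$; combined with the obvious fact that $\Delta$ preserves Higman-solvability (since $\Delta^{l-1}(\Delta(f(x))) = \Delta^l(f(x)) \in \Z$) and that $\HL(\Delta(f(x))) = \HL(f(x)) - 1$, this gives $\HL(f(x)) = 1 + \HL(\Delta(f(x))) \leq 1 + \irred(\Delta(f(x))) \leq \irred(f(x))$ by induction, with the base case $\HL(f(x)) = 0$ being trivial.

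So the crux is the single-step estimate $\irred(\Delta(f(x))) < \irred(f(x))$ when $f(x)$ is Higman-solvable and non-constant. Unwinding Definition \ref{Definition_dhg||}, we have $f(x) = \auxpolg(x^{\|f(x)\|})$ and $\Delta(f(x)) = \auxpol(x)$, where $\auxpol(x)$ is a closed divisor of $\auxpolg(x)$. First I would check that $\irred(\auxpolg(x)) \leq \irred(f(x))$: writing $f(x) = \auxpolg(x^n)$ with $n = \|f(x)\|$, every irreducible factor of $\auxpolg(x)$ of positive degree contributes a factor $g(x^n)$ of $f(x)$ of positive degree, and the map $g(x) \mapsto g(x^n)$ is multiplicative and injective on nonconstant polynomials, so the number of irreducible factors (with multiplicity) cannot decrease — actually it can only stay the same or increase, which is the direction we want. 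Since $\auxpol(x)$ divides $\auxpolg(x)$, we get $\irred(\auxpol(x)) \leq \irred(\auxpolg(x)) \leq \irred(f(x))$, so it remains to rule out equality, i.e. to show $\auxpol(x)$ is a \emph{proper} divisor of $\auxpolg(x)$ whenever $f(x)$ is non-constant Higman-solvable. If instead $\auxpol(x) = \auxpolg(x)$ up to a unit, then from $f(x) = \auxpolg(x^n)$ and the condition $f(0) f(1) \neq 0$ we'd get $\Delta(f(x)) = \auxpolg(x)$ has the same degree as... — here I need to be careful: I claim that then $\Delta(f(x))$ is essentially $f(x)$ again (possibly after the $x \mapsto x^n$ substitution), forcing $\Delta^l(f(x)) \notin \Z$ for all $l$, contradicting Higman-solvability. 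The point is that if $\auxpolg(x)$ is its own gcd-of-closed-divisors, then $\auxpolg(x)$ is already "closed" in the relevant sense and further applications of $\Delta$ cannot strip anything off, exactly as in the examples $(x^2-2)(x^3-3)$ and $(x^n-1)/(x-1)$ with $n$ composite.

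The main obstacle I anticipate is the bookkeeping in that last implication: one must verify carefully that $\Delta(f(x)) = \auxpolg(x)$ (up to units) really does imply $\Delta^{l}(f(x)) \notin \Z$ for every $l$, which requires understanding how $\|\cdot\|$, the $\auxpolg$ operation, and the closed-divisor gcd interact under iteration — in particular that $\|\auxpolg(x)\| = 1$ and hence $\Delta(\auxpolg(x)) = \Delta^2(f(x))$ is again the gcd of closed divisors of $\auxpolg(x)$, which we assumed was $\auxpolg(x)$ itself, so the sequence stabilizes at a non-constant polynomial. The condition $f(0) \neq 0$ should be used to ensure $\auxpolg(0) \neq 0$ so that no spurious powers of $x$ appear and $\|\auxpolg(x)\| = 1$ genuinely holds; the condition $f(1) \neq 0$ is presumably needed elsewhere but may also enter to exclude degenerate closed divisors. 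Once the stabilization claim is established, Higman-solvability forces the stable value to be a constant, contradicting non-constancy of $f(x)$, and the proposition follows.
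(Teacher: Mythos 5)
Your proof takes essentially the same approach as the paper's. The paper inducts on $\irred(f(x))$ and you induct on $\HL(f(x))$, but that is only a cosmetic difference; both reduce the statement to the single-step inequality $\irred(\Delta(f(x))) < \irred(f(x))$ for a non-constant Higman-solvable $f(x)$, which combined with $\HL(f(x)) = 1 + \HL(\Delta(f(x)))$ and the induction hypothesis gives the bound. The difference is in how much detail is offered for that key inequality: the paper simply asserts, as a ``Note that'' with no justification, that for a Higman-solvable $f(x)$ one has $\irred(f(x)) = \irred(\Delta(f(x))) \iff f(x) \in \Z$, whereas you try to actually derive it via the chain $\irred(\auxpol(x)) \leq \irred(\auxpolg(x)) \leq \irred(\auxpolg(x^{\|f(x)\|})) = \irred(f(x))$ together with a stabilization argument showing that $\auxpol(x) = \auxpolg(x)$ forces $\Delta$ to become the identity on $\auxpolg(x)$, contradicting Higman-solvability. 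That stabilization idea is the right one and matches the intuition the paper presumably has in mind. There are, however, small holes in your dichotomy that you yourself flag: equality in the $\irred$ chain does not immediately force $\auxpol(x) = \auxpolg(x)$ on the nose (they could differ by a content factor, and the substitution $x \mapsto x^{\|f(x)\|}$ could preserve $\irred$ even when $\|f(x)\| > 1$), so one needs to argue that equality at every link already implies the stabilization, treating units and contents with care. These are precisely the bookkeeping points the paper glosses over, so your attempt is not really worse than the paper's written proof — it simply makes visible the fact that the ``obvious'' observation deserves a lemma of its own.
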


\begin{proof}
	Note that, for a Higman-solvable polynomial $f(x)$, we have: $\irred(f(x)) = \irred(\Delta(f(x)))$ $\iff$ $f(x) \in \Z$.  We now use induction on $\irred(f(x))$. If $\irred(f(x)) = 0$, then we indeed have $\HL(f(x)) = 0 \leq 0 = \deg(f(x))$. So we assume $\irred(f(x)) > 0$. Since $f(x) \not \in \Z$, we have $1 + \irred(\Delta(f(x))) \leq \irred(f(x))$. The induction hypothesis yields $\HL(\Delta(f(x))) \leq \irred(\Delta(f(x)))$. So also $\HL(f(x)) = 1 + \HL(\Delta(f(x))) \leq 1 + \irred(\Delta(f(x))) \leq \irred(f(x))$.
\end{proof}

\begin{lemma} \label{Proposition_Generate}
	Let $m,n \in \Z_{\geq 1}$ and let $a(x) \in \Z[x] \setminus \{ x , -x , x-1 , -x + 1\}$.
	\begin{enumerate}[\rm (a)]
		\item If $a(x)$ is irreducible, then $a(x)$ is Higman-solvable and $\HL(a(x)) \leq 1$.
		\item If $a(x)$ is Higman-solvable, then $a(x^n)$ is Higman-solvable and $\HL(a(x^n)) = \HL(a(x))$.
		\item Suppose that $a(x)$ is Higman-solvable and monic of degree $m$ and let $C$ be its companion operator. Let $b(x) \in \Z[x]$ be any polynomial of degree at most $n$ that is irreducible and indecomposable and that is coprime to the non-zero, integer polynomial $x \cdot (x-1) \cdot \chi_{C \otimes C}(x) \cdot \chi_{C^1}(x^1) \cdot \chi_{C^2}(x^2) \cdots \chi_{C^{m+n}}(x^{m+n})$. Then also the product $a(x) \cdot b(x)$ is Higman-solvable.
	\end{enumerate}
\end{lemma}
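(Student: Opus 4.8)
The plan: parts (a) and (b) are short, and part (c) carries the substance.

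For (a): if $\deg a(x) = 0$ then $a(x)$ is a prime, $\Delta(a(x)) = a(x) \in \Z$, and $\HL(a(x)) = 0$; otherwise the four excluded polynomials are precisely the irreducible ones with $a(0) = 0$ or $a(1) = 0$, so $a(0)a(1) \ne 0$, while $\overline{a(x)}$ is again irreducible and primitive with $\|\overline{a(x)}\| = 1$, hence has no powerful divisor of positive degree, so the constant $1$ satisfies the three conditions defining a closed divisor of $\overline{a(x)}$; thus $\Delta(a(x)) \in \Z$ and $\HL(a(x)) \le 1$. For (b): a computation on exponent supports gives $\|a(x^n)\| = n\,\|a(x)\|$ and $\overline{a(x^n)} = \overline{a(x)}$, so $\Delta(a(x^n)) = \Delta(a(x))$ directly from Definition~\ref{Definition_Delta}; iterating, $\Delta^l(a(x^n)) = \Delta^l(a(x))$ for every $l \ge 1$, and since $a(x^n)(0) = a(0)$ and $a(x^n)(1) = a(1)$, the polynomial $a(x^n)$ is Higman-solvable with the same Higman-length as $a(x)$.

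For (c), put $f(x) := a(x)b(x)$. Since $C$ is the companion operator of the monic $a(x)$ we have $\chi_{C^1}(x^1) = \chi_C(x) = a(x)$, so $a(x)$ is one of the factors coprime to $b(x)$, giving $\gcd(a(x),b(x)) = 1$; coprimality with $x(x-1)$ gives $f(0)f(1) = a(0)a(1)b(0)b(1) \ne 0$ since $a(x)$ is Higman-solvable. It then suffices to prove $\Delta(f(x)) \in \{a(x),\,\Delta(a(x))\}$: as $a(x)$ is non-constant, $\HL(a(x)) \ge 1$, and this gives $\Delta^{\,1+\HL(a(x))}(f(x)) = \Delta^{\,\HL(a(x))}(a(x)) \in \Z$, so $f(x)$ is Higman-solvable with $\HL(f(x)) \le 1 + \HL(a(x))$. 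I will use the indecomposability of $b(x)$ only through $\|b(x)\| = 1$ (i.e.\ $b(x)$ is not powerful), which is genuinely necessary: $a(x) = x^2+1$, $b(x) = x^3+5$ satisfies every other hypothesis of (c), yet $\Delta(a(x)b(x)) = a(x)b(x)$, so $a(x)b(x)$ is not Higman-solvable.

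The core is an analysis of $\overline{f(x)}$ and its powerful and closed divisors. Let $A,B$ be the root sets of $a(x),b(x)$, and for $k \ge 1$ let $\mu_k$ denote the group of $k$-th roots of unity; then $A\cdot A$ is the root set of $\chi_{C\otimes C}(x)$, while $\bigcup_{i=1}^{m+n}\{\text{roots of }\chi_{C^i}(x^i)\} = \{\zeta\lambda : \zeta \in \mu_i \text{ for some }i \le m+n,\ \lambda \in A\}$. Since $\gcd(a,b)=1$, the root set of $f(x)$ is $A \sqcup B$ and $\deg f(x) = m + \deg b(x) \le m+n$. If $\|f(x)\| = k \ge 2$, then $A \sqcup B$ is a disjoint union of $\mu_k$-orbits each of size exactly $k$, so $k \le m+n$, and coprimality with $\chi_{C^k}(x^k)$ forbids a $\mu_k$-orbit from meeting both $A$ and $B$ (such an orbit would put a root of $b(x)$ into $\mu_k A$); hence $B$ is $\mu_k$-invariant, i.e.\ $b(x) \in \Z[x^k]$, contradicting $\|b(x)\|=1$. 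So $\overline{f(x)} = a(x)b(x)$. The same orbit argument shows the powerful divisors of $a(x)b(x)$ are exactly those of $a(x)$, up to constants: a divisor is $PQ$ with $P \mid a(x)$ and $Q \in \{\pm1,\pm b(x)\}$ by irreducibility of $b(x)$, and $Q = \pm b(x)$ inside a powerful divisor would put $Pb(x)$ into $\Z[x^k]$ for some $k \ge 2$ with $k \le \deg(Pb(x)) \le m+n$, again forcing $B$ to be $\mu_k$-invariant. Finally, split on $\|a(x)\|$. If $\|a(x)\| \ge 2$, then $a(x)$ is powerful, hence a powerful divisor of $f(x)$, so every closed divisor of $f(x)$ is a multiple of $a(x)$; and $a(x)$ is itself a closed divisor of $f(x)$ (its root-set condition $A\cdot A \cap (A\cup B) \subseteq A$ holds by $\chi_{C\otimes C}$-coprimality, and its coprimality condition is $\gcd(a,b)=1$), so $\Delta(f(x)) = a(x)$. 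If $\|a(x)\| = 1$, then $\overline{a(x)} = a(x)$, and one checks: (i) $\Delta(a(x))$, which is a closed divisor of $a(x)$, is also a closed divisor of $f(x)$ (again via $\chi_{C\otimes C}$-coprimality and $\gcd(a,b)=1$), so $\Delta(f(x)) \mid \Delta(a(x))$; (ii) the $a(x)$-part $P'$ of every closed divisor $P'Q'$ of $f(x)$ is a closed divisor of $a(x)$ (each of the three conditions for $P'$ in $a(x)$ is the restriction to the $a(x)$-part of the one for $P'Q'$ in $f(x)$, using $\gcd(a,b)=1$ and $A\cap B=\emptyset$), so $\Delta(a(x)) \mid P' \mid P'Q'$, hence $\Delta(a(x)) \mid \Delta(f(x))$. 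Thus $\Delta(f(x)) = \Delta(a(x))$, which completes the plan.

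The main obstacle is precisely this closed-divisor analysis for (c). The crucial and delicate point is the uniform bound $m+n$ on the orders of the roots of unity that can intervene: it is available only because every relevant root of unity generates a $\mu_k$-orbit sitting inside a polynomial of degree at most $m+n$, so that $k \le m+n$ and the coprimality with $\chi_{C^k}(x^k)$ applies. Getting the degree bookkeeping right in each of its uses (for $\|f(x)\|$, for the powerful divisors, and for the root-set conditions), and correctly handling the dichotomy $\|a(x)\| = 1$ versus $\|a(x)\| \ge 2$ (which is exactly why $\Delta(f(x))$ equals $a(x)$ in one case and $\Delta(a(x))$ in the other), is where the work lies.
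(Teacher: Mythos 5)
Your proof is correct and follows the same core mechanism as the paper's: the bound $\deg f \leq m+n$ caps the order of any relevant root of unity, so coprimality of $b(x)$ with $\chi_{C^i}(x^i)$ for $i \leq m+n$ forces every powerful divisor of $f(x)$ of positive degree to divide $a(x)$. Where the paper compresses the remaining reduction into ``it suffices to show\ldots'', you carry it out explicitly by pinning down $\Delta(f(x))$ as $a(x)$ when $\|a(x)\| \geq 2$ and as $\Delta(a(x))$ when $\|a(x)\| = 1$; this is a bit more than strictly needed, but it simultaneously proves the refinement stated in Remark~\ref{Remark_Bound_HL}.
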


\begin{proof}
	(a) By construction, $\Delta(a(x))$ coincides with the content of $a(x)$. \\ 
	
	(b) Since $\|a(x^n)\| = n \cdot \|a(x)\|$, we have $\Delta(a(x^n)) = \Delta(a(x))$. So the polynomial $a(x^n)$ is Higman-solvable of length $\HL(a(x^n)) = \HL(a(n))$. \\
	
	(c) The eigenvalues of $C$ are precisely the roots of $a(x)$. So the roots of $\chi_{C \otimes C}(x)$ are the products $\lambda \cdot \mu$ of the roots $\lambda,\mu$ of $g(x)$, and the roots of $\chi_{C^i}(x^i)$ are the the product $\omega \cdot \lambda$ of the roots $\omega$ of $x^i - 1$ with the roots $\lambda$ of $a(x)$. Since $b(x)$ is coprime to $x \cdot (x-1)$, we have $b(0) \cdot b(1) \neq 0$, so that also $f(0) \cdot f(1) \neq 0$. Since $b(x)$ is coprime to $\chi_{C \otimes C}(x)$, it suffices to show that every powerful divisor of $f(x)$ of positive degree divides $a(x)$. Let $u(x)$ be a powerful divisor of $f(x)$ of positive degree. Then $\|u(x)\| \leq \deg(f(x)) \leq m + n$. If $\gcd(b(x),u(x)) \neq 1$, then there is a root $\lambda$ of $b(x)$ and a root $\mu$ of $a(x)$ such that $(\lambda / \mu)^{\|u(x)\|}$. In this case, $\gcd(b(x),\chi_{C^{i}}(x^{i})) \neq 1$, for $i := \|u(x)\|$. This contradiction finishes the proof. 
\end{proof}

\begin{remark} \label{Remark_Bound_HL}
	The proof of claim (c) also shows that $\HL(a(x)) \leq \HL(f(x)) \leq \HL(a(x)) + 1$. The upper bound is reached if and only if $a(t)$ is powerful. 
\end{remark}

Let us now prove a more precise version of Proposition \ref{Proposition_Generic}.

\begin{proposition} \label{Proposition_Generic_Detailed}
	Fix $c,d_1,n_1, \ldots, d_c,n_c \in \Z_{\geq 1}$ and set $k := |\{ i | i \leq c-1 \text{ and } n_i \geq 2 \}|$. For each $h \in \Z_{\geq 1}$, we consider the $c$-tuples $(g_1(x),\ldots,g_c(x))$ of monic polynomials in $\Z[x]$ with prescribed degrees $\deg(g_1(x)) = d_1 , \ldots , \deg(g_c(x)) = d_c$ and heights at most $h$. Let $P(h)$ be the probability that the product $f(x) = g_1(x^{n_1 \cdots n_c}) \cdots g_{c-1}(x^{n_{c-1} \cdot n_c}) \cdot g_c(x^{n_c}) $ is Higman-solvable of Higman-length $1+k$. Then $\lim_{h \rightarrow + \infty} P(h) = 1$.
\end{proposition}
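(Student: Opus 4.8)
The plan is to prove the sharper statement by induction on $c$ (equivalently on $k$), exhibiting for each choice of the parameters a ``generic'' family $\mathcal{G}_h$ of the admissible $c$-tuples $(g_1(x),\dots,g_c(x))$ such that the fraction of admissible tuples lying in $\mathcal{G}_h$ tends to $1$ as $h\to\infty$ and such that, on $\mathcal{G}_h$, the product $f(x)$ is Higman-solvable with $\HL(f(x))=1+k$. The set $\mathcal{G}_h$ is cut out by finitely many conditions of two kinds. First, Galois-theoretic conditions: each $g_i(x)$ has Galois group the full symmetric group $S_{d_i}$ (so in particular $g_i(x)$ is irreducible, indecomposable, and not one of the four exceptional polynomials of Lemma~\ref{Proposition_Generate}), and the compositum of the splitting fields is ``as large and as cyclotomically generic as possible''; these hold with probability $1-o(1)$ by van der Waerden's Theorem~\ref{Theorem_vanderWaerden}. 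Second, non-vanishing conditions: finitely many explicit resultants in the coefficients of the $g_i(x)$ are required to be non-zero, each defining the complement of a proper Zariski-closed subset of the coefficient space $\mathbb{A}^{d_1+\dots+d_c}$; since a proper subvariety of $\mathbb{A}^D$ contains only $o(h^D)$ integer points of height at most $h$, these conditions too hold with probability $1-o(1)$.

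The computation of the iterated images of $f(x)$ under $\Delta$ is then explicit. Put $N_i:=n_i\cdots n_c$, so $f(x)=\prod_{i=1}^{c}g_i(x^{N_i})$, and let $j^{\ast}$ be the largest index $i\le c-1$ with $n_i\ge 2$, with the convention $j^{\ast}:=0$ if there is none (so $k\ge 1$ iff $j^{\ast}\ge 1$). On $\mathcal{G}_h$ no coefficient of $f(x)$ vanishes by accident, so $\|f(x)\|=N_c$ and $\auxpolg(x)=\prod_{i=1}^{c}g_i\!\bigl(x^{N_i'}\bigr)$ with $N_i':=n_i\cdots n_{c-1}$ for $i<c$ and $N_c':=1$. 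Still on $\mathcal{G}_h$, the factors $g_i(x^{N_i'})$ are pairwise coprime, $\|g_i(x^{N_i'})\|=N_i'$, no subproduct of them is accidentally powerful, and no product of two roots coming from factors with $N_i'\ge 2$ --- nor any such root multiplied by a root of unity of order at most $\deg f(x)$ --- is a root of a factor with $N_i'=1$. It follows that the powerful divisors of $\auxpolg(x)$ of positive degree are exactly the non-trivial subproducts of $\{\,g_i(x^{N_i'}):i\le j^{\ast}\,\}$ (namely the factors with $N_i'\ge 2$), each lying in $\Z[x^{\,n_{j^{\ast}}}]$ when $j^{\ast}\ge 1$; hence their product $p(x):=\prod_{i\le j^{\ast}}g_i(x^{N_i'})$ is divisible by every powerful divisor of $\auxpolg(x)$, is itself powerful, and satisfies the remaining two axioms of a closed divisor by the resultant conditions. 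Therefore $p(x)$ is the smallest closed divisor of $\auxpolg(x)$, i.e.
\[
\Delta\!\bigl(f(x)\bigr)=\prod_{i=1}^{j^{\ast}}g_i\!\bigl(x^{\,n_i\cdots n_{j^{\ast}}}\bigr).
\]

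If $j^{\ast}=0$ (the base case $k=0$), then $\auxpolg(x)=g_1(x)\cdots g_c(x)$ has no powerful divisor of positive degree, so $\Delta(f(x))\in\Z$ and $\HL(f(x))=1$. If $j^{\ast}\ge 1$, then $\Delta(f(x))$ is again a product of the shape in the statement, now for the parameters $(j^{\ast};d_1,n_1,\dots,d_{j^{\ast}},n_{j^{\ast}})$ in the variables $g_1(x),\dots,g_{j^{\ast}}(x)$, for which the quantity playing the role of $k$ equals $|\{\,i\le j^{\ast}-1:n_i\ge 2\,\}|=k-1$. The bad set for the sub-tuple $(g_1(x),\dots,g_{j^{\ast}}(x))$ does not involve $g_{j^{\ast}+1}(x),\dots,g_c(x)$, so it still has density $o(1)$; the inductive hypothesis then gives $\HL(\Delta(f(x)))=k$ after discarding a further $o(1)$ from $\mathcal{G}_h$. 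Since $f(0)f(1)=\prod_i g_i(0)\cdot\prod_i g_i(1)\ne 0$ on $\mathcal{G}_h$ --- and, because $\auxpolg(0)=f(0)$, $\auxpolg(1)=f(1)$ and $\Delta(f(x))\mid\auxpolg(x)$, this non-vanishing passes automatically down the recursion --- we conclude that $f(x)$ is Higman-solvable with $\HL(f(x))=1+\HL(\Delta(f(x)))=1+k$. Taking the union of the at most $c$ bad events arising in the recursion, each of density $o(1)$, gives $P(h)\to 1$.

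The main difficulty is the probabilistic input. Theorem~\ref{Theorem_vanderWaerden} only yields the single-polynomial statement, so upgrading it to joint independence and cyclotomic genericity of the splitting fields requires an additional argument (alternatively, one can organize the induction around Lemma~\ref{Proposition_Generate} together with a product lemma for Higman-solvable polynomials). For the resultant conditions the real work is to check that each of the finitely many resultants involved is not identically zero as a polynomial in the coefficients of the $g_i(x)$ --- most conveniently by exhibiting a single tuple whose roots have pairwise widely separated archimedean absolute values, forcing the putative multiplicative relation to fail --- and to confirm that only finitely many of them occur, which holds because $\deg f(x)$ is fixed, bounding both the index triples and the orders of the relevant roots of unity. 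The last ingredient, that a proper subvariety of affine $D$-space carries $o(h^D)$ integer points of height at most $h$, is elementary.
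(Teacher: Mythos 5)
Your inductive skeleton (induction on $c$, using Theorem~\ref{Theorem_vanderWaerden} and a density count) matches the paper's, and your explicit identification of $\Delta(f(x))$ on the generic set as $\prod_{i\le j^{\ast}}g_i(x^{n_i\cdots n_{j^{\ast}}})$ is correct and more informative than what the paper records. But as written the argument has a genuine gap, which you yourself flag: the density $1-o(1)$ for your set $\mathcal{G}_h$ is not established. You impose two kinds of conditions --- Galois-theoretic genericity of the tuple $(g_1,\dots,g_c)$, and non-vanishing of certain resultants --- and for neither do you supply the required estimate. Theorem~\ref{Theorem_vanderWaerden} is a per-polynomial statement and does not by itself control joint behaviour of the splitting fields or cyclotomic genericity; and the assertion that each resultant is not identically zero in the coefficients is stated as ``the real work'' but not carried out. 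Until these are closed, $P(h)\to 1$ is not proved.

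The paper's proof follows exactly the ``alternative'' you mention in passing and sidesteps both difficulties at once. Writing $a(x):=g_1(x^{n_1\cdots n_{c-1}})\cdots g_{c-1}(x^{n_{c-1}})$ (which depends only on $g_1,\dots,g_{c-1}$) and $b(x):=g_c(x)$, Lemma~\ref{Proposition_Generate}(c) together with Remark~\ref{Remark_Bound_HL} shows that $f(x^{1/n_c})=a(x)\cdot b(x)$ fails to have the required Higman-length only if (i) $a(x)$ itself fails (handled by the inductive hypothesis, independently of $g_c$), or (ii) $b(x)$ is reducible or decomposable (van der Waerden, coordinate-wise), or (iii) $b(x)$ shares a factor with the explicit auxiliary polynomial of Lemma~\ref{Proposition_Generate}(c) built from $a(x)$; for each fixed $(g_1,\dots,g_{c-1})$ there are only boundedly many monic $g_c$ of degree $d_c$ falling into case (iii), so the total count is $O(h^{d-d_c})=o(h^d)$. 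In particular, no joint Galois independence is actually needed --- only coordinate-wise irreducibility and indecomposability --- and the single coprimality condition required is already packaged in Lemma~\ref{Proposition_Generate}(c), so there is no family of resultants whose non-vanishing must be verified. If you want to salvage your more explicit route, the key realization is the same: drop the compositum/cyclotomic genericity requirement, reduce all your conditions to finitely many Zariski-closed ones plus coordinate-wise van der Waerden, and for each Zariski-closed condition exhibit one tuple that avoids it.
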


\begin{proof}
	Set $d := d_1 + \cdots + d_c$ and let $B_c(h)$ be the number of $c$-tuples $(g_i(x))_{1\leq i \leq c}$ for which $f(x)$ \emph{fails} to satisfy the properties of the proposition. Since $P(h) = 1 - B_c(h) / (2h+1)^d$, we need only show that $B_c(h)$ is $o(h^d)$. We proceed by induction on $c$. Suppose first that $c = 1$. Lemma \ref{Proposition_Generate} (a) and (b) imply that if $f(x) = g_1(x^{n_1})$ is \emph{not} Higman-solvable, then $g_1(x)$ is reducible. Theorem \ref{Theorem_vanderWaerden} implies that the number of reducible $g_1(x)$ is $o(h^{d})$. Since none of the $g_1(x)$ are constant, we conclude that $B_1(h) = o(h^d)$. Now suppose that $c > 1$. We prove the claim for $n_{c-1} > 1$, the other case being a straight-forward variation. For a given $c$-tuple, we define $a(x) := g_1(x^{n_1 \cdots n_{c-1}}) \cdots g_{c-1}(x^{n_{c-1}})$ and $b(x) := g_c(x)$. Lemma \ref{Proposition_Generate} (c) and Remark \ref{Remark_Bound_HL} imply that if $f(x^{1/n_c})$ is \emph{not} Higman-solvable of Higman-length $k$, then $a(x)$ is not Higman-solvable of length $k-1$, or $b(x)$ is reducible or decomposable, or $b(x)$ divides the polynomial defined in the proposition. The induction hypothesis guarantees that there are at most $o(h^{d})$ tuples of the first kind, while Theorem \ref{Theorem_vanderWaerden} guarantees that there are at most $o(h^d)$ of the second kind. For the remaining tuples, it is clear that there are only $o(h^d)$ of the third kind. So the number of $c$-tuples for which $f(x^{1/{n_c}})$ is not Higman-solvable of Higman-length $k$ is $o(h^d)$. Lemma \ref{Proposition_Generate} (b) now implies that $B_c(h) = o(h^d)$.
\end{proof}

\begin{remark}
	By replacing Theorem \ref{Theorem_vanderWaerden} with Bhargava's recent solution \cite{Bhargava} of the van der Waerden conjecture, we can make the growth rates of $B_c(h)$ and $P(h)$ explicit.
\end{remark}

\subsection{More invariants and their properties}

\emph{Let us for the rest of this subsection assume that $f(x) \in \Z[x]$ satisfies $f(0) \neq 0$.}

\begin{definition}[$\auxpolg_{n,i}(x),\auxpoldn_n(x)$] \label{Definition_gni_dn}
	Let $\auxpolg(x)$ be given by $b_0 + b_1 \cdot x + \cdots + b_m \cdot x^m$. For each $n \in \Z_{\geq 2}$ and $i \in \{ 0 , \ldots , n-1\}$, we define the partial sums $\auxpolg_{n,i}(x) := \sum_{j \equiv i \operatorname{mod} n} b_j \cdot x^j.$ We also define $\auxpoldn_n(x) := \gcd(\auxpolg_{n,0}(x),\ldots,\auxpolg_{n,n-1}(x))$. 
\end{definition}

We note that $\auxpolg_{n,i}(x) \in t^i \cdot \Z[x^n]$ and $\auxpoldn_n(x) \in \Z[x^n]$. 

\begin{lemma} \label{Lemma_RhoDef} \label{Lemma_ConstA}
	There exists some $\rho \in \Z_{\geq 1}$ such that, for all $n \in \Z_{\geq 2}$, we have $\rho \cdot \auxpol(x) \in \auxpolg_{n,0}(t) \cdot \Z[x] + \cdots + \auxpolg_{n,n-1}(t) \cdot \Z[x].$
\end{lemma}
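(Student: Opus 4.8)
The plan is to compare, for each $n \ge 2$, the ideal $I_n := \auxpolg_{n,0}(x)\cdot\Z[x] + \cdots + \auxpolg_{n,n-1}(x)\cdot\Z[x]$ of $\Z[x]$ with the principal ideal $\auxpol(x)\cdot\Z[x]$, and to show that $I_n$ always contains a bounded multiple of $\auxpol(x)$. Write $\auxpolg(x) = b_0 + b_1 x + \cdots + b_m x^m$ with $m := \deg\auxpolg(x)$; since $f(0) \ne 0$ we have $b_0 = \auxpolg(0) = f(0) \ne 0$. Note that $\auxpolg(x) = \sum_{i=0}^{n-1}\auxpolg_{n,i}(x)$ always lies in $I_n$, but this is too weak on its own: we must instead locate $\auxpol(x)$ itself inside $I_n$, up to a factor that does not depend on $n$.

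First I would dispose of all but finitely many $n$. If $n > m$, then every residue class modulo $n$ contains at most one exponent occurring in $\auxpolg(x)$, so $\auxpolg_{n,i}(x) = b_i x^i$ for $0 \le i \le m$ and $\auxpolg_{n,i}(x) = 0$ for $i > m$; in particular $\auxpolg_{n,0}(x) = b_0$, so the nonzero constant $b_0$ lies in $I_n$ and hence $|b_0|\cdot\auxpol(x) \in b_0\cdot\Z[x] \subseteq I_n$. This settles every $n > m$ with the single multiplier $|b_0|$.

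For each of the finitely many $n$ with $2 \le n \le m$, I would route the argument through $\auxpoldn_n(x) = \gcd(\auxpolg_{n,0}(x),\ldots,\auxpolg_{n,n-1}(x))$. The key step is the divisibility $\auxpoldn_n(x) \mid \auxpol(x)$: indeed $\auxpoldn_n(x)$ divides each $\auxpolg_{n,i}(x)$ and hence divides their sum $\auxpolg(x)$, and (as recalled after Definition~\ref{Definition_gni_dn}) $\auxpoldn_n(x) \in \Z[x^n]$ with $n \ge 2$, so $\auxpoldn_n(x)$ is a powerful divisor of $\auxpolg(x)$; since $\auxpol(x)$ is a \emph{closed} divisor of $\auxpolg(x)$, condition~(a) of that definition forces $\auxpoldn_n(x) \mid \auxpol(x)$. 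On the other hand, since $\Q[x]$ is a principal ideal domain, the ideal of $\Q[x]$ generated by the polynomials $\auxpolg_{n,i}(x)$ is generated by their gcd $\auxpoldn_n(x)$ (up to a nonzero rational scalar); clearing denominators in a B\'ezout relation $\auxpoldn_n(x) = \sum_i r_i(x)\auxpolg_{n,i}(x)$ with $r_i(x) \in \Q[x]$ produces an integer $N_n \ge 1$ with $N_n\cdot\auxpoldn_n(x) \in I_n$. Multiplying by $\auxpol(x)/\auxpoldn_n(x) \in \Z[x]$ and using that $I_n$ is an ideal gives $N_n\cdot\auxpol(x) \in I_n$.

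It then remains to set $\rho := |b_0|\cdot\prod_{n=2}^{m} N_n$ (with the empty product equal to $1$): this is a positive integer, it is divisible by $|b_0|$ and by each $N_n$, and so $\rho\cdot\auxpol(x) \in I_n$ for every $n \ge 2$ by the two cases above. I expect the only genuinely delicate point to be the divisibility $\auxpoldn_n(x) \mid \auxpol(x)$ --- this is precisely where we need $\auxpol(x)$ to be a closed divisor, not merely a divisor, of $\auxpolg(x)$; the rest is bookkeeping together with the fact that $\Q[x]$ is a PID.
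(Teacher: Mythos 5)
Your proof is correct and takes essentially the same approach as the paper: for each $n$, apply B\'ezout over $\Q[x]$ to place an integer multiple of $\auxpoldn_n(x)$ inside $I_n$, observe that $\auxpoldn_n(x) \in \Z[x^n]$ is a powerful divisor of $\auxpolg(x)$ and hence divides the closed divisor $\auxpol(x)$, and take $\rho$ to be the product of the finitely many B\'ezout constants. Your explicit handling of $n > m$ via $\auxpolg_{n,0}(x) = b_0 = f(0) \neq 0$ is a slight refinement of the paper's tacit use of the fact that $I_n$ stabilizes once $n$ exceeds $\deg\auxpolg(x)$.
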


\begin{proof}
	Consider $n \geq 2$. Then the $\auxpolg_{n,i}(x)/\auxpoldn_n(x)$ are coprime over $\Q$. So Bezout's theorem gives some $r_n \in \Z \setminus \{0\}$ such that $r_n \cdot \auxpoldn_n(x) \in \auxpolg_{n,0}(t) \cdot \Z[x] + \cdots + \auxpolg_{n,n-1}(t) \cdot \Z[x]$. Now define $\rho := r_2 \cdots r_{m+1} \in \Z \setminus\{0\}$. Since each $\auxpoldn_n(x)$ is powerful, we have $\auxpoldn_n(x) | \auxpol(x)$. For all $n \geq 2$, we then have $\rho \cdot \auxpol(x) \in r_n \cdot \auxpoldn_n(x) \cdot \Z[x] \subseteq \auxpolg_{n,0}(t) \cdot \Z[x] + \cdots + \auxpolg_{n,n-1}(t) \cdot \Z[x]$. 
\end{proof}

The above existence result can be made effective by means of the Euclidean algorithm. The integers $\rho$ form a non-zero principal ideal of $\Z$. This justifies the following definition.

\begin{definition} [$\constA$] \label{Definition_ConstA}
	We define $\constA$ to be the minimal $\rho \in \Z_{\geq 1}$ for which Lemma \ref{Lemma_RhoDef} holds.
\end{definition}

\begin{definition}[$\constB$] \label{Definition_ConstB} \label{Lemma_PowerReductiond^k} If $\auxpol(x)$ is constant, then we define $\constB := \auxpol(x)$. If $\auxpol(x)$ is not a constant, but $\auxpolg(x)/\auxpol(x)$ is a constant, then we define $\constB := \auxpolg(x) / \auxpol(x)$. Else, we define $\constB := \operatorname{Res}(\auxpol(x),\auxpolg(x)/\auxpol(x))$.
\end{definition}

\begin{lemma} \label{Lemma_PowerReductiond^k} \label{Lemma_ConstB} We have $\constB \in \Z \setminus \{0\}$, and 
	for all $k \in \Z_{\geq 1}$, we have $\constB^k \cdot \auxpol(x) \in \auxpol(x)^k \cdot \Z[x] + \auxpolg(x) \cdot \Z[x].$
\end{lemma}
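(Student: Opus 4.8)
The plan is to argue case by case according to Definition~\ref{Definition_ConstB}, after first recording the consequences of the standing hypothesis $f(0)\neq 0$. Since $f(x)=\auxpolg(x^{\|f(x)\|})$ we have $\auxpolg(0)=f(0)\neq 0$; in particular $\auxpolg(x)\neq 0$, and because $\auxpol(x)$ is a (closed) divisor of $\auxpolg(x)$ in $\Z[x]$, both $\auxpol(x)$ and $q(x):=\auxpolg(x)/\auxpol(x)$ are nonzero elements of $\Z[x]$ whose constant terms divide the nonzero integer $f(0)$. This already yields $\constB\in\Z\setminus\{0\}$ in the two cases where $\constB$ is defined as the constant $\auxpol(x)$ or the constant $q(x)$, and it is the key input in the remaining case as well.

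Next I would dispatch the two degenerate cases, in which the asserted inclusion is essentially immediate. If $\auxpol(x)$ is a constant $d$, then $\constB=d$ and $\constB^{k}\cdot\auxpol(x)=d^{k+1}\in d^{k}\cdot\Z[x]=\auxpol(x)^{k}\cdot\Z[x]$ for every $k\geq 1$. If $\auxpol(x)$ is non-constant but $q(x)$ is a constant, then $\constB=q(x)$, so $\auxpolg(x)=\constB\cdot\auxpol(x)$ and hence $\constB^{k}\cdot\auxpol(x)=\constB^{k-1}\cdot\auxpolg(x)\in\auxpolg(x)\cdot\Z[x]$ for every $k\geq 1$.

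The substantive case is when neither $\auxpol(x)$ nor $q(x)$ is constant, so that $\constB=\operatorname{Res}(\auxpol(x),q(x))$. Here I would use two standard facts about resultants of polynomials over $\Z$. First, since $\auxpol(x)$ is a closed divisor of $\auxpolg(x)$, condition (c) of that notion gives $\gcd(\auxpol(x),q(x))=1$; in particular these polynomials have no common root, so $\constB=\operatorname{Res}(\auxpol(x),q(x))$ is a nonzero integer. Second, the classical identity expressing the resultant as an element of the ideal generated by the two polynomials provides $u(x),v(x)\in\Z[x]$ with $\constB=\auxpol(x)\cdot u(x)+q(x)\cdot v(x)$. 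Reducing this modulo the ideal $q(x)\cdot\Z[x]$ and raising to the $k$-th power yields $\constB^{k}=\auxpol(x)^{k}\cdot u(x)^{k}+q(x)\cdot w(x)$ for some $w(x)\in\Z[x]$; multiplying by $\auxpol(x)$ and using $\auxpol(x)\cdot q(x)=\auxpolg(x)$ gives
$$\constB^{k}\cdot\auxpol(x)=\auxpol(x)^{k+1}\cdot u(x)^{k}+\auxpolg(x)\cdot w(x)\in\auxpol(x)^{k}\cdot\Z[x]+\auxpolg(x)\cdot\Z[x],$$
since $\auxpol(x)^{k+1}\cdot u(x)^{k}=\auxpol(x)^{k}\cdot\bigl(\auxpol(x)\cdot u(x)^{k}\bigr)$.

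I do not expect a genuine obstacle here: the statement is a piece of bookkeeping, and the only points deserving care are (a) confirming that the three cases of Definition~\ref{Definition_ConstB} are exhaustive and that $\constB$ is a nonzero integer in each, which is exactly where $f(0)\neq 0$ is used, and (b) invoking the resultant identity in its integral form (with $u(x),v(x)\in\Z[x]$, not merely in $\Q[x]$). The conceptual content is simply that the $k$-th power of $\constB$, read modulo $q(x)$, lands in the ideal generated by $\auxpol(x)^{k}$, and that one further factor of $\auxpol(x)$ converts the surviving $q(x)$-term into a genuine multiple of $\auxpolg(x)$.
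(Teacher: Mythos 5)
Your proof is correct and follows essentially the same route as the paper: dismiss the two degenerate cases, then use the integral resultant--Bezout identity $\constB = \auxpol(x)\,u(x) + (\auxpolg(x)/\auxpol(x))\,v(x)$ with $u,v\in\Z[x]$, raise to a power, and multiply through by $\auxpol(x)$ so that the leftover $q(x)$-term becomes a multiple of $\auxpolg(x)$. The paper's version is terser (it leaves the Bezout identity implicit and works with $\constB^{k-1}$ rather than $\constB^{k}$, giving a marginally sharper containment before relaxing it), but the underlying argument is the one you give.
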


\begin{proof}
	We may assume that $\auxpol(x)$ and $\auxpolg(x)/\auxpol(x)$ are not constant. Since $\auxpol(x)$ is a closed divisor of $\auxpolg(x)$, we have $\constB \in \Z \setminus \{0\}$. We next observe that $\constB^{k-1} \cdot \auxpol(x) \in \auxpol(x) \cdot (\auxpol(x)^{k-1}) \cdot \Z[x] + \auxpol(x) \cdot (\auxpolg(x) / \auxpol(x)) \cdot \Z[x] = \auxpol(x)^k \cdot \Z[x] + \auxpolg(x) \cdot \Z[x]$ for al $k \geq 1$. Then also $\constB^k \cdot \auxpol(x) \in \auxpol(x)^k \cdot \Z[x] + \auxpolg(x) \cdot \Z[x]$ for all $k \geq 1$.
\end{proof}

We use the standard notation $\operatorname{lc}(\auxpol(x))$ for the \emph{leading coefficient} of $\auxpol(x) \in \Z \setminus\{0\}$.

\begin{definition}[$\auxpol^2(x)$] \label{Definition_d2}
	If $\auxpol(x) \in \Z$, then we define $\auxpol^2(x) := \auxpol(x)$. Else, we let $C$ be the companion operator of the monic polynomial ${\operatorname{lc}(\auxpol(x))}^{-1} \cdot \auxpol(x)$. Then we define the polynomial $\auxpol^2(x)$ as the multiple ${\operatorname{lc}(\auxpol(x))}^{k^{2k}} \cdot \chi_{C \otimes C}(x)$ of the characteristic polynomial of the Kronecker square $C \otimes C$ of $C$. 
\end{definition}

We note that $\auxpol^2(x) \in \Z[x] \setminus \{0\}$. 

\begin{lemma} \label{Lemma_D2Roots}
	Let $Z$ be the (possibly empty) root set of $\auxpol(x)$ in an algebraically-closed field $\F$. If $\operatorname{lc}(\auxpol(x)) \not \equiv 0 \mod \operatorname{char}(\F)$, then $Z \cdot Z$ is the root set of $\auxpol^2(x)$ in $\F$.
\end{lemma}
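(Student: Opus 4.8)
The plan is to treat the two cases of Definition~\ref{Definition_d2} separately, and in the main case to read off the root set of $\auxpol^2(x)$ from the eigenvalues of the Kronecker square of a suitable reduction of the companion operator $C$. Throughout I would interpret the phrase ``root set of $g(x) \in \Z[x]$ in $\F$'' via the canonical ring homomorphism $\Z \to \F$, and reduce matrices entrywise under the same map.

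If $\auxpol(x) \in \Z$, then $\auxpol^2(x) = \auxpol(x)$, and the hypothesis $\operatorname{lc}(\auxpol(x)) \not\equiv 0 \bmod \operatorname{char}(\F)$ says precisely that the image of the nonzero integer $\auxpol(x)$ in $\F$ is again nonzero. Hence $\auxpol(x)$ and $\auxpol^2(x)$ are nonzero constants in $\F[x]$, both with empty root set, and $Z = \emptyset = Z \cdot Z$. This settles the first case.

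For the main case, assume $\auxpol(x) \notin \Z$ and write $\ell := \operatorname{lc}(\auxpol(x))$, $k := \deg \auxpol(x) \ge 1$. Since the monic polynomial $\ell^{-1}\auxpol(x) \in \Q[x]$ has coefficients in $\Z[1/\ell]$, its companion operator $C$ has entries in $\Z[1/\ell]$. The hypothesis on $\ell$ lets the map $\Z \to \F$ extend to a ring homomorphism $\Z[1/\ell] \to \F$; applying it entrywise gives $\bar C \in M_k(\F)$, and I would write $\bar\ell \in \F^\times$ for the image of $\ell$. Because ring homomorphisms commute with determinants (hence characteristic polynomials) and with Kronecker products, $\chi_{\bar C}(x)$ is $\bar\ell^{-1}$ times the image of $\auxpol(x)$ in $\F[x]$, and the image of $\auxpol^2(x) = \ell^{k^{2k}}\chi_{C\otimes C}(x)$ in $\F[x]$ equals $\bar\ell^{\,k^{2k}}\chi_{\bar C\otimes\bar C}(x)$. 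In particular the root set $Z$ of $\auxpol(x)$ in $\F$ is the root set of the monic $\chi_{\bar C}(x)$, so by algebraic closedness of $\F$ it is exactly the set of eigenvalues of $\bar C$. Trigonalizing $\bar C = PTP^{-1}$ over $\F$ with $T$ upper triangular of diagonal $(\lambda_1,\dots,\lambda_k)$, one gets $\bar C \otimes \bar C = (P\otimes P)(T\otimes T)(P\otimes P)^{-1}$ with $T\otimes T$ upper triangular of diagonal $(\lambda_i\lambda_j)_{1 \le i,j \le k}$; hence the set of eigenvalues of $\bar C\otimes\bar C$, and therefore the root set of $\chi_{\bar C\otimes\bar C}(x)$, is $\{\lambda_i\lambda_j\} = Z\cdot Z$. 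Since $\bar\ell^{\,k^{2k}} \ne 0$, multiplying by this scalar leaves the root set unchanged, and the root set of $\auxpol^2(x)$ in $\F$ is $Z\cdot Z$.

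The only delicate point — the step I would treat as the (modest) obstacle — is compatibility with positive characteristic: one must be sure that $C$, its Kronecker square, and the associated characteristic polynomials all reduce correctly modulo $\operatorname{char}(\F)$, and that the leading coefficient $\ell^{k^{2k}}$ of $\auxpol^2(x)$ does not collapse to $0$. The hypothesis $\operatorname{lc}(\auxpol(x)) \not\equiv 0 \bmod \operatorname{char}(\F)$ is exactly what makes the reduction factor through $\Z[1/\ell]$ and keeps $\bar\ell^{\,k^{2k}}$ a unit; in characteristic $0$ it is automatic. The Kronecker eigenvalue computation and the constant case require nothing beyond the algebraic closedness of $\F$.
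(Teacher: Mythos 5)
Your proof is correct and takes essentially the same approach as the paper: identify $Z$ with the eigenvalues of the (reduced) companion operator and pass to the Kronecker square. You are somewhat more careful than the paper's terse proof about making the base change $\Z[1/\operatorname{lc}(\auxpol(x))] \to \F$ explicit and checking it commutes with characteristic polynomials and Kronecker products, but the underlying argument is the same.
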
 

\begin{proof}
	We assume that $\auxpol(x) \not \in \Z$, since otherwise there is nothing to prove. We see that $Z$ is the root set of ${\operatorname{lc}(\auxpol(x))}^{-1} \cdot \auxpol(x)$. So $Z$ is the set of eigenvalues of the corresponding companion operator $C$. So $Z \cdot Z$ is the set of eigenvalues of the Kronecker square $C \otimes C$. So $Z \cdot Z$ is the set of roots of $\chi_{C \otimes C}(t)$ and therefore of $\auxpol^2(x)$.
\end{proof}

\begin{definition}[$u(x),\constC$] \label{Definition_ConstC}
	We define $u(x) := \gcd(\auxpol^2(x),\auxpolg(x))$. If $\auxpolg(x)/u(x)$ is constant, then we define $\constC := \auxpolg(x)/u(x)$. If $\auxpol^2(x)/u(x)$ is a constant, but $\auxpolg(x)/u(x)$ is not a constant, then we define $\constC := \auxpol^2(x)/u(x)$. Else, we define $\constC := \operatorname{Res}(\auxpol^2(x)/u(x) , \auxpolg(x) / u(x))$.
\end{definition}

\begin{lemma} \label{Lemma_Reductiond2} \label{Lemma_ConstC}
	We have $\constC \in \Z \setminus \{0\}$, and for all $k \in \Z_{\geq 1}$, we have $\constC^k \cdot \auxpol(x)^k \in \auxpol^2(x)^k \cdot \Z[x] + \auxpolg(x) \cdot \Z[x].$
\end{lemma}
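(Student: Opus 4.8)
The plan is to follow the pattern of the proof of Lemma~\ref{Lemma_ConstB}: first isolate a single Bezout-type relation, then bootstrap it by taking powers, and finally transfer the result from the auxiliary polynomial $u(x)$ to $\auxpol(x)$. I would first record that, in each of the three cases of Definition~\ref{Definition_ConstC}, one has $\constC \in \Z \setminus \{0\}$ together with the uniform membership
\[ \constC \cdot u(x) \in \auxpol^2(x) \cdot \Z[x] + \auxpolg(x) \cdot \Z[x]. \]
If $\auxpolg(x)/u(x)$ is constant this is just $\constC \cdot u(x) = \auxpolg(x)$; if $\auxpol^2(x)/u(x)$ is constant it is $\constC \cdot u(x) = \auxpol^2(x)$; and otherwise $\auxpol^2(x)/u(x)$ and $\auxpolg(x)/u(x)$ are coprime non-constant integer polynomials (coprime because $u(x) = \gcd(\auxpol^2(x),\auxpolg(x))$), so $\constC = \operatorname{Res}(\auxpol^2(x)/u(x),\auxpolg(x)/u(x))$ is a non-zero integer and the classical identity $\operatorname{Res}(a,b) \in a \cdot \Z[x] + b \cdot \Z[x]$, multiplied through by $u(x)$, gives the displayed relation. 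Raising this membership to the $k$-th power then yields $\constC^k \cdot u(x)^k \in \auxpol^2(x)^k \cdot \Z[x] + \auxpolg(x) \cdot \Z[x]$, since in the expansion the one pure term is divisible by $\auxpol^2(x)^k$ while every other term carries a factor $\auxpolg(x)$.

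Next I would reduce the lemma to the single divisibility $u(x) \mid \auxpol(x)$ in $\Z[x]$. Granting it, one writes $\auxpol(x) = u(x) \cdot v(x)$ with $v(x) \in \Z[x]$ and multiplies the previous membership by $v(x)^k$, obtaining exactly $\constC^k \cdot \auxpol(x)^k \in \auxpol^2(x)^k \cdot \Z[x] + \auxpolg(x) \cdot \Z[x]$, which is the assertion. The degenerate case $\auxpol(x) \in \Z$ I would dispatch at the outset: then $\auxpol^2(x) = \auxpol(x)$ and $u(x) = \gcd(\auxpol(x),\auxpolg(x)) = \auxpol(x)$, so $u(x) \mid \auxpol(x)$ holds trivially.

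The heart of the argument is therefore $u(x) \mid \auxpol(x)$, which I would prove by bounding separately the primitive part and the content of $u(x)$. For the primitive part: by Lemma~\ref{Lemma_D2Roots} the roots of $\auxpol^2(x)$ are precisely the pairwise products of the roots of $\auxpol(x)$, so, writing $A$ and $B$ for the root sets of $\auxpol(x)$ and $\auxpolg(x)$, the root set of $u(x) = \gcd(\auxpol^2(x),\auxpolg(x))$ equals $(A \cdot A) \cap B$, which is contained in $A$ by property~(b) of the closed divisor $\auxpol(x)$. Moreover, property~(c) gives $\gcd(\auxpol(x),\auxpolg(x)/\auxpol(x)) = 1$, so every element of $A$ has the same multiplicity in $\auxpolg(x)$ as in $\auxpol(x)$; hence the multiplicity of each root of $u(x)$ in $u(x)$ is at most its multiplicity in $\auxpol(x)$, and the primitive part of $u(x)$ divides that of $\auxpol(x)$. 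For the content: the constant $\operatorname{cont}(\auxpolg(x))$, viewed as a polynomial, is a powerful divisor of $\auxpolg(x)$, hence divides $\auxpol(x)$ by property~(a); combined with $\auxpol(x) \mid \auxpolg(x)$ this forces $\operatorname{cont}(\auxpol(x)) = \operatorname{cont}(\auxpolg(x))$, whence $\operatorname{cont}(u(x)) = \gcd(\operatorname{cont}(\auxpol^2(x)),\operatorname{cont}(\auxpolg(x)))$ divides $\operatorname{cont}(\auxpol(x))$. Combining the two divisibilities gives $u(x) \mid \auxpol(x)$ in $\Z[x]$.

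The one delicate point is this last step, and it is where all three axioms of a closed divisor are used: (b) and (c) control the roots and multiplicities of the primitive part of $u(x)$, while (a), applied to the constant (hence powerful) divisor $\operatorname{cont}(\auxpolg(x))$, controls its content; it is the simultaneous validity of these that prevents $u(x)$ from being strictly larger than $\auxpol(x)$. Everything else is routine manipulation of ideals of $\Z[x]$, exactly parallel to the proof of Lemma~\ref{Lemma_ConstB}.
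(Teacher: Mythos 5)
Your proof is correct and takes essentially the same route as the paper: a Bezout/resultant relation, the power trick to pass from $\constC$ to $\constC^k$, and the transfer from $u(x)$ to $\auxpol(x)$ via the divisibility $u(x) \mid \auxpol(x)$. The paper merely asserts that last divisibility (``Since $\auxpol(x)$ is a closed divisor of $\auxpolg(x)$, we see that $u(x) \mid \auxpol(x)$''); your careful verification of it, controlling the primitive part through properties (b), (c) and Lemma~\ref{Lemma_D2Roots} and the content through property (a), supplies exactly the detail the paper leaves implicit.
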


\begin{proof}
	We may again assume that $\auxpol^2(x)/u(x)$ and $\auxpolg(x)/u(x)$ are not constant. By construction, $\auxpol^2(x)/u(x) $ and $ \auxpolg(x) / u(x)$ are coprime. So $\constC \in \Z \setminus \{0\}$. For all $k \geq 1$, we get $\constC^k \cdot u(x)^k \in u(x)^k \cdot (\auxpol^2(x)/u(x))^k \cdot \Z[x] + u(x)^k \cdot \auxpolg(x) \cdot \Z[x] \subseteq \auxpol^2(x)^k \cdot \Z[x] + \auxpolg(x) \cdot \Z[x]$. {Since $\auxpol(x)$ is a closed divisor of $\auxpolg(x)$, we see that $u(x) | \auxpol(x)$.} So also $\constC^k \cdot \auxpol(x)^k \in \auxpol^2(x)^k \cdot \Z[x] + \auxpolg(x) \cdot \Z[x]$ for all $k \geq 1$.
\end{proof}

\subsection{The invariant $\HI(f(x))$}

\begin{definition}[$\HI (f(x))$] \label{Definition_MainInvariant} 
	Let $f(x) \in \Z[x]$ be Higman-solvable. If $\HL(f(x)) = 0$, then we define $\HI (f(x)) := f(x)$. Else, we recursively  define
	\begin{eqnarray*}
		\HI(f(x)) &:=& \phantom{\cdot} f(1) \cdot \operatorname{lc}(f(x)) \cdot \constA \cdot \constB \cdot \constC \\
		&& \cdot \discr{f(x)} \cdot \prodant{f(x)} \\
		&& \cdot \HI (\Delta(f(x))).
	\end{eqnarray*}
\end{definition}

\begin{proposition} \label{Proposition_MainInvNonZero}
	If $f(x) \in \Z[x]$ is Higman-solvable, then $\HI (f(x)) \in \Z \setminus \{0\}$.
\end{proposition}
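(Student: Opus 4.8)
The plan is to induct on the Higman-length $\HL(f(x))$, mirroring the recursion in Definition \ref{Definition_MainInvariant}. In the base case $\HL(f(x)) = 0$ the polynomial $f(x) = \Delta^0(f(x))$ lies in $\Z$, and since $f(x)$ is Higman-solvable we have $f(0)\cdot f(1)\neq 0$; but a constant polynomial satisfies $f(0) = f(1) = f(x)$, so in fact $f(x)\in\Z\setminus\{0\}$, which is precisely $\HI(f(x))$ in this case.

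For the inductive step I would assume $\HL(f(x)) = l \geq 1$, so that $f(x)\notin\Z$ has positive degree, and check that every factor in the recursive formula for $\HI(f(x))$ is a non-zero integer. The factor $f(1)$ is non-zero because $f(0)\cdot f(1)\neq 0$; the factor $\operatorname{lc}(f(x))$ is a non-zero integer because $\deg f(x)\geq 1$; the factors $\constA$, $\constB$, $\constC$ lie in $\Z\setminus\{0\}$ by Definition \ref{Definition_ConstA}, Lemma \ref{Lemma_ConstB}, and Lemma \ref{Lemma_ConstC} respectively --- all of which apply since $f(0)\neq 0$ --- and $\discr{f(x)}$ and $\prodant{f(x)}$ lie in $\Z\setminus\{0\}$ by their construction in \cite{MoensIdentitiesGroups} (see also Table \ref{Table_Invariants}). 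It then remains only to deal with the recursive factor $\HI(\Delta(f(x)))$.

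To apply the induction hypothesis to $\HI(\Delta(f(x)))$, I would verify that $\Delta(f(x))$ is itself Higman-solvable of strictly smaller Higman-length. Write $\Delta(f(x)) = \auxpol(x)$, a divisor of $\auxpolg(x)$, where $f(x) = \auxpolg(x^{\|f(x)\|})$. From $f(0)\neq 0$ and $f(1)\neq 0$ we get $\auxpolg(0)\neq 0$ and $\auxpolg(1)\neq 0$, and since $\auxpol(x)\mid\auxpolg(x)$ also $\auxpol(0)\neq 0$ and $\auxpol(1)\neq 0$. From $\Delta^l(f(x))\in\Z$ with $l\geq 1$ we get $\Delta^{l-1}(\Delta(f(x))) = \Delta^l(f(x))\in\Z$, so $\Delta(f(x))$ is Higman-solvable with $\HL(\Delta(f(x)))\leq l-1 < \HL(f(x))$. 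Hence $\HI(\Delta(f(x)))\in\Z\setminus\{0\}$ by induction, and $\HI(f(x))$, being a finite product of non-zero integers, lies in $\Z\setminus\{0\}$.

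I do not anticipate a genuine obstacle: the argument is essentially an assembly of the non-vanishing facts already established for the auxiliary invariants in Section \ref{Section_Invariants}, together with the cited range of $\discr{\cdot}$ and $\prodant{\cdot}$. The only point worth stating carefully --- and the key to the induction being well-founded --- is that $\Delta$ preserves Higman-solvability while strictly decreasing $\HL$.
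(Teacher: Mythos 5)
Your proof is correct and uses the same approach as the paper: induction on $\HL(f(x))$, verifying each factor in the recursive formula and passing to $\Delta(f(x))$. You are in fact slightly more careful than the paper's own proof --- the paper dismisses the base case with ``nothing to prove'' and asserts without comment that $\Delta(f(x))$ is again Higman-solvable, whereas you justify both points explicitly.
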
  

\begin{proof}
	We use induction on $\HL(f(x))$. If $\HL(f(x)) = 0$, then there is nothing to prove. So we assume $\HL(f(x)) > 0$. Since $\Delta(f(x))$ is again Higman-solvable, the induction hypothesis states that $\HI (\Delta(f(x))) \in \Z \setminus\{0\}$. It now remains to observe that: $f(1) \in \Z \setminus \{0\}$ by assumption, $\operatorname{lc}(\auxpol(x)) \in \Z \setminus \{0\}$ by construction, $\constA \in \Z \setminus \{0\}$ by definition, $\constB \in \Z \setminus \{0\}$ by Lemma \ref{Lemma_PowerReductiond^k}, $\constC \in \Z \setminus \{0\}$ by Lemma \ref{Lemma_Reductiond2}, and $\discr{f(x)} , \prodant{f(x)} \in \Z \setminus \{0\}$ by Proposition $2.1.5$ of \cite{MoensIdentitiesGroups}.
\end{proof}

\section{A reduction from $(G,\alpha,f(x))$ to $(\underline{\operatorname{F}_1}(G),\alpha^{\|f(x)\|},\auxpol(x))$} \label{Section_Aux}

\begin{definition}
	Let $G$ be a group and let $\map{\varphi}{G}{G}$ be a map. We say that $\varphi$ is \emph{nilpotent}, if there exists some $n \in \N$ such that $\varphi^n(x) = 1$ for all $x \in G$. We say that $G$ is a \emph{$\varphi$-group} if $\varphi$ is nilpotent.
\end{definition}

The following property will be used repeatedly (but not always explicitly).

\begin{lemma} \label{Lemma_Concat}
	Suppose that $N$ is normal in $G$ with $\alpha(N) = N$. Let $\map{\overline{\alpha}}{G/N}{G/N}$ be the natural automorphism of $G/N$. Let $f(x)\in \Z[x]$. If $G/N$ is an $f(\overline{\alpha})$-group and $N$ is an $f({\alpha})$-group, then also $G$ is an $f(\alpha)$-group.
\end{lemma}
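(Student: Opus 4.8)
The plan is to track the effect of the operator $f(\alpha)$ through the short exact sequence $1 \to N \to G \to G/N \to 1$. Recall that $G/N$ being an $f(\overline{\alpha})$-group means there is some $r \in \N$ with $f(\overline{\alpha})^{\,r}(\overline{g}) = \overline{1}$ for all $\overline{g} \in G/N$, and similarly $N$ being an $f(\alpha)$-group gives some $s \in \N$ with $f(\alpha)^{\,s}(y) = 1$ for all $y \in N$. The key observation is that $f(\alpha)$ is compatible with the quotient map: writing $\pi : G \to G/N$ for the projection, we have $\pi(f(\alpha)(g)) = f(\overline{\alpha})(\pi(g))$ for every $g \in G$, because $\pi$ is a homomorphism intertwining $\alpha$ and $\overline{\alpha}$ (here we use $\alpha(N) = N$, so that $\overline{\alpha}$ is well-defined) and $f(\alpha)(g)$ is just a word in the $\alpha^i(g)$. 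Iterating, $\pi \circ f(\alpha)^{\,k} = f(\overline{\alpha})^{\,k} \circ \pi$ for all $k \geq 0$.

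First I would apply this with $k = r$: for every $g \in G$ we get $\pi(f(\alpha)^{\,r}(g)) = f(\overline{\alpha})^{\,r}(\pi(g)) = \overline{1}$, so $f(\alpha)^{\,r}(g) \in N$. Thus $f(\alpha)^{\,r}$ maps $G$ into $N$. Now restrict attention to this image: since $f(\alpha)^{\,r}(g) \in N$ and $N$ is an $f(\alpha)$-group, we have $f(\alpha)^{\,s}\!\bigl(f(\alpha)^{\,r}(g)\bigr) = 1$. Hence $f(\alpha)^{\,r+s}(g) = 1$ for all $g \in G$, which is exactly the statement that $G$ is an $f(\alpha)$-group with nilpotency exponent at most $r + s$.

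There is essentially no obstacle here; the only points requiring a word of care are (i) that $f(\alpha)$ and $\overline{\alpha}$ are genuinely compatible with $\pi$ — this needs $\alpha(N) = N$ so that $\overline{\alpha}$ exists, and it needs the elementary fact that applying a homomorphism to a product $g^{a_0}\alpha(g)^{a_1}\cdots\alpha^d(g)^{a_d}$ yields the same product built from the images — and (ii) that the exponents $r$ and $s$ are finite, which is the definition of ``$\varphi$-group.'' No coprimality or fixed-point-freeness hypothesis is used; the lemma is purely a statement about the closure of the class of $f(\alpha)$-groups under extensions, proved by composing the two nilpotency bounds.
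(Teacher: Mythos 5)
Your proof is correct and follows essentially the same route as the paper's (which is a two-line version of the same argument, using $k_1 + k_2$ where you use $r + s$). You make explicit the intertwining $\pi \circ f(\alpha)^k = f(\overline{\alpha})^k \circ \pi$ that the paper leaves implicit, which is exactly the point that justifies $f(\alpha)^{k_1}(G) \subseteq N$.
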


\begin{proof}
	By assumption, there exist $k_1 , k_2 \in \Z_{\geq 1}$ such that ${f(\alpha)}^{k_1}(G) \subseteq H$ and ${f(\alpha)}^{k_2}(N) \subseteq\{1\}$. So $\varphi^{k_1 + k_2}(G) = \{1\}$.
\end{proof}

In what follows, we will often write slightly inaccurately ``$N$ is an $f(\alpha)$-group'' instead of ``$N$ is an $f(\overline{\alpha})$-group,'' since no confusion can arise. We had already introduced ordered identities of automorphisms. We now consider two more types of identities.

\begin{definition}
	Let $G$ be a group with an automorphism $\alpha$. The polynomial $f(x) \in \Z[x]$ is an \emph{identity} of $\alpha$ if there exist $h_1(x) , \ldots, h_k(x) \in \Z[x]$ with $f(x) = h_1(x) + \cdots + h_k(x)$ and the map $G \longrightarrow G$ sending $g$ to $(h_1(\alpha)(g)) \cdots (h_k(\alpha)(g))$ vanishes identically. We say that $f(x)$ is an \emph{abelian identity} of $\alpha$ if $f(x)$ is an (ordered) identity of all the automorphisms $\map{\overline{\alpha}}{S}{S}$ induced on the abelian, characteristic sections $S$ of $G$.
\end{definition}

It is clear that every ordered identity of $\alpha$ is also an identity of $\alpha$, and that every identity of $\alpha$ is also an abelian identity of $\alpha$. We will need this weakest notion of identity to make our induction work lateron.

\begin{lemma} \label{Lemma_IdealOfAbelianIdentities}
	Let $\alpha$ be an automorphism of a group $G$. Then the abelian identities of $\alpha$ form an ideal of $\Z[x]$.
\end{lemma}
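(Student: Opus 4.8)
The plan is to reduce the statement to a standard fact about annihilator ideals. Fix an abelian, characteristic section $S = M/N$ of $G$ and write it additively. The induced automorphism $\overline{\alpha}_S$ makes $S$ into a module over $\Z[x]$ in which $x$ acts as $\overline{\alpha}_S$, and since $S$ is abelian the evaluation map $\map{\operatorname{ev}_S}{\Z[x]}{\operatorname{End}(S)}$, $p(x) \mapsto p(\overline{\alpha}_S)$, is a ring homomorphism. On an abelian group the ordered product $h^{a_0} \cdot \overline{\alpha}_S(h)^{a_1} \cdot \overline{\alpha}_S^2(h)^{a_2} \cdots$ is nothing but $f(\overline{\alpha}_S)(h)$ for $f(x) = \sum_i a_i x^i$, and more generally any sum $h_1(\overline{\alpha}_S)(h) + \cdots + h_k(\overline{\alpha}_S)(h)$ equals $(h_1 + \cdots + h_k)(\overline{\alpha}_S)(h)$ by distributivity; so on $S$ the ordered-, unordered-, and abelian-identity notions all collapse, and $f(x)$ is an (ordered) identity of $\overline{\alpha}_S$ exactly when $f(\overline{\alpha}_S)$ is the zero endomorphism, i.e.\ when $f(x) \in \Ann_{\Z[x]}(S) = \Ker(\operatorname{ev}_S)$.

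The second step is to assemble this over all sections. By definition, $f(x)$ is an abelian identity of $\alpha$ if and only if it is an (ordered) identity of $\overline{\alpha}_S$ for every abelian, characteristic section $S$ of $G$, that is, if and only if $f(x)$ lies in $\bigcap_S \Ann_{\Z[x]}(S)$, the intersection ranging over the (set of) abelian, characteristic sections of $G$. Each $\Ann_{\Z[x]}(S)$ is an ideal of $\Z[x]$, and an arbitrary intersection of ideals is again an ideal; this is precisely the desired conclusion. Alternatively, closure can be checked directly: if $f(\overline{\alpha}_S) = 0$ for every such $S$, then for any $p(x) \in \Z[x]$ we get $(p f)(\overline{\alpha}_S) = p(\overline{\alpha}_S) \circ f(\overline{\alpha}_S) = 0$, and for two abelian identities $f_1, f_2$ we get $(f_1 \pm f_2)(\overline{\alpha}_S) = f_1(\overline{\alpha}_S) \pm f_2(\overline{\alpha}_S) = 0$, using that $\operatorname{ev}_S$ is a ring homomorphism.

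I do not expect a genuine obstacle. The only point requiring care is the verification in the first paragraph that the ordered identity, the identity, and the abelian identity all agree once one has restricted to an abelian section, which is immediate from commutativity and distributivity in an abelian group. This parallels the proof of Lemma~\ref{Lemma_IdealOfIdentities} (see \cite[Proposition~3.1.3]{MoensIdentitiesGroups}), but is in fact simpler, because passing to abelian sections linearizes the evaluation maps.
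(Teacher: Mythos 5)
Your proof is correct and is essentially the paper's own argument: the paper verifies directly on each abelian characteristic section $S$ that $h(\alpha)s=(f(\alpha)s)\cdot(g(\alpha)s)$ for $h=f+g$ and $h(\alpha)s=g(\alpha)(f(\alpha)s)$ for $h=f\cdot g$, which is precisely your observation that evaluation at $\overline{\alpha}_S$ is a ring homomorphism. Packaging the conclusion as an intersection of annihilator ideals $\bigcap_S \Ann_{\Z[x]}(S)$ is a harmless rephrasing, and your ``alternative'' direct check coincides with the proof given in the paper.
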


\begin{proof}
	Let $f(x) , g(x) \in \Z[x]$. Suppose first that $f(x), g(x)$ are an abelian identities of $\alpha$ and set $h(x) := f(x) + g(x)$. Then $f(\alpha)$ and $g(\alpha)$ vanish on every characteristic, abelian section $S$ of $G$. For every $s \in S$, we then have $h(\alpha)s = (f(\alpha)s) \cdot (g(\alpha) s) = 1 \cdot 1 = 1$. So also $f(x) + g(x)$ is an abelian identity of $\alpha$. We next suppose that $f(x)$ is an abelian identity of $\alpha$ but that $g(x)$ need not be one. We set $h(x) := f(x) \cdot g(x)$ and we choose a characteristic, abelian section $S$ of $G$. For every $s \in S$, we then have $h(\alpha)s = g(\alpha)( f(\alpha)s ) = g(\alpha)(1) = 1$. So also $f(x) \cdot g(x)$ is an abelian identity of $\alpha$. This finishes the proof.
\end{proof}

\begin{lemma} \label{Lemma_AbelianPowerIdentity}
	Let $G$ be a finite, solvable group of derived length $k$ with automorphism $\alpha$. If $f(x) \in \Z[x]$ is an abelian identity of $\alpha$, then $G$ is an $f(\alpha)$-group and $f(x)^k$ is an identity of $\alpha$.
\end{lemma}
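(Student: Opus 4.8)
The plan is to use the derived series of $G$, on whose successive abelian quotients $f(\alpha)$ is forced to act trivially, and then to recognise the $k$-fold iterate $f(\alpha)^k$ as a word which exhibits $f(x)^k$ as an identity.

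First I would fix notation: write the derived series $G = G^{(0)} \supseteq G^{(1)} \supseteq \cdots \supseteq G^{(k)} = \{1\}$, with $G^{(i+1)} = [G^{(i)},G^{(i)}]$. Each $G^{(i)}$ is characteristic in $G$, hence $\alpha$-invariant, and each section $A_i := G^{(i)}/G^{(i+1)}$ is an abelian, characteristic section of $G$; let $\overline{\alpha}_i$ denote the induced automorphism of $A_i$. Since $f(x)$ is an abelian identity of $\alpha$, the ordered operator $f(\overline{\alpha}_i)$ vanishes on $A_i$ for every $i$; writing $A_i$ additively and $f(x) = \sum_j a_j x^j$, this says $\sum_j a_j \overline{\alpha}_i^{\,j} = 0$ on $A_i$.

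The first key step is the inclusion $f(\alpha)(G^{(i)}) \subseteq G^{(i+1)}$ for every $i$: for $g \in G^{(i)}$ the element $f(\alpha)(g) = \prod_j \alpha^j(g)^{a_j}$ lies in $G^{(i)}$ because $G^{(i)}$ is $\alpha$-invariant, and its image in $A_i$ equals $f(\overline{\alpha}_i)$ applied to the image of $g$, hence is trivial. Iterating this inclusion $k$ times gives $f(\alpha)^k(G) \subseteq f(\alpha)^{k-1}(G^{(1)}) \subseteq \cdots \subseteq G^{(k)} = \{1\}$, so $f(\alpha)$ is nilpotent and $G$ is an $f(\alpha)$-group. (Alternatively one can run an induction on the derived length, peeling off the abelian bottom term $G^{(k-1)}$ and invoking Lemma~\ref{Lemma_Concat}.)

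For the second assertion I would expand the iterate. By induction on $k$ one checks that $f(\alpha)^k$ is a word map $g \mapsto \prod_t \alpha^{m_t}(g)^{e_t}$ for some finite family of exponents $(m_t,e_t)$, and that the associated polynomial $\sum_t e_t x^{m_t}$ equals $f(x)^k$; the inductive step is simply that applying $\alpha^i$ (an endomorphism, hence distributing over products) to the word $f(\alpha)^{k-1}(g)$, raising to the power $a_i$, and multiplying over $i$, corresponds at the level of the exponent polynomial to multiplying $\sum_j a_j x^j$ by $f(x)^{k-1}$. Setting $h_t(x) := e_t x^{m_t} \in \Z[x]$, we then have $f(x)^k = \sum_t h_t(x)$ and $g \mapsto \prod_t h_t(\alpha)(g) = f(\alpha)^k(g)$, which is the trivial map by the first part; hence $f(x)^k$ is an identity of $\alpha$. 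The only real obstacle is the bookkeeping here: verifying that the nested product defining $f(\alpha)^k$ unfolds into a flat word whose exponent polynomial is genuinely $f(x)^k$, the mild subtlety being the sign reversals introduced when some $a_i < 0$, which however leave the exponent sum unchanged. An induction on $k$ keeps this routine, since all that matters is that each $\alpha^i$ is an endomorphism and that ordering is invisible to the abelianised exponent count.
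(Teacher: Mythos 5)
Your proof is correct and follows essentially the same route as the paper's (very terse) argument: observe that $f(\alpha)$ maps each term of the derived series into the next because $f(x)$ is an identity on the abelian, characteristic factor $G^{(i)}/G^{(i+1)}$, iterate $k$ times to get $f(\alpha)^k(G)=\{1\}$, and then read off from the flattened word that the associated exponent polynomial is $f(x)^k$. You have merely filled in the bookkeeping that the paper leaves implicit; no gap.
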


\begin{proof}
	We see that $f(\alpha)$ vanishes on each of the $k$ abelian factors of the derived series of $G$. So then $f(\alpha)^k(G) = \{1\}$ and $f(x)^k$ is an identity of $\alpha$.
\end{proof}

\begin{proposition} \label{Proposition_QGroup}
	Let $f(x) \in \Z[x]$ satisfy $f(0) \neq 0$ and let $q$ be a prime satisfying $\gcd(q,\operatorname{lc}(f(x)) \cdot \constC) = 1$. Let $Q$ be a finite $q$-group and let $\beta$ be an automorphism. Suppose that $\auxpolg(x)$ is an abelian identity of $\beta$ and suppose that the Frattini-quotient $Q / \Phi(Q)$ is an $\auxpol(\beta)$-group. Then also $Q$ is an $\auxpol(\beta)$-group.
\end{proposition}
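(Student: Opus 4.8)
The plan is to realise the action of $\beta$ on $Q$ through the restricted Lie algebra attached to the Zassenhaus--Jennings--Lazard filtration of $Q$, to invoke Jennings' theorem to reduce everything to the degree-one layer $Q/\Phi(Q)$, and then to combine the abelian identity $\auxpolg(x)$ with the coprimality hypothesis to confine the eigenvalues of $\beta$ on this Lie algebra to the root set $Z_q$ of $\auxpol(x) \bmod q$. (If $\auxpol(x) \in \Z$ the statement is immediate: since $q \nmid \operatorname{lc}(f(x)) = \auxpol(x)$ we have $Z_q = \varnothing$, so the hypothesis on $Q/\Phi(Q)$ already forces $Q = \{1\}$.) So assume $\auxpol(x) \notin \Z$.

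First I would set up the Lie algebra. Let $Q = D_1 \geq D_2 \geq \cdots \geq D_{c+1} = \{1\}$ be the Jennings series of $Q$; each $D_n$ is characteristic, each layer $L_n := D_n/D_{n+1}$ is an elementary abelian $q$-group, and $L := \bigoplus_{n \geq 1} L_n$ is a restricted Lie algebra over $\F_q$ which, by Jennings' classical theorem, is generated as such by its degree-one component $L_1 = Q/\Phi(Q)$. The automorphism $\beta$ preserves the series and induces a restricted Lie algebra automorphism of $L$, still written $\beta$. I record two facts. (i) Since $Q/\Phi(Q)$ is an $\auxpol(\beta)$-group, the $\F_q$-linear operator $\auxpol(\beta)$ is nilpotent on $L_1$, so every eigenvalue of $\beta$ on $L_1 \otimes_{\F_q} \overline{\F_q}$ lies in $Z_q$. (ii) Each $L_n$ is an abelian characteristic section of $Q$, so the abelian identity $\auxpolg(x)$ of $\beta$ gives $\auxpolg(\beta) = 0$ on $L_n$, whence every eigenvalue of $\beta$ on $L_n \otimes_{\F_q} \overline{\F_q}$ is a root of $\auxpolg(x) \bmod q$.

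The crux is the claim that every eigenvalue of $\beta$ on every $L_n$ lies in $Z_q$, which I would prove by induction on $n$, the case $n=1$ being (i). For $n \geq 2$, Jennings' theorem gives $L_n = \sum_{i+j=n}[L_i,L_j] + S$, where $S$ is the $\F_q$-span of $\{x^{[q]} : x \in L_{n/q}\}$ (present only if $q \mid n$), and all these are $\beta$-submodules. Each $[L_i,L_j]$ is a $\beta$-equivariant image of $L_i \otimes_{\F_q} L_j$, so by induction the eigenvalues of $\beta$ on it lie in $Z_q \cdot Z_q$, which by Lemma \ref{Lemma_D2Roots} (applicable because $q \nmid \operatorname{lc}(\auxpol(x))$) is exactly the root set of $\auxpol^2(x) \bmod q$. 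Modulo $\sum_{i+j=n}[L_i,L_j]$ the restricted power map $x \mapsto x^{[q]}$ is additive and satisfies $(\lambda x)^{[q]} = \lambda^q x^{[q]}$, so the eigenvalues of $\beta$ on the quotient are among the $q$-th powers of eigenvalues of $\beta$ on $L_{n/q}$; as $\auxpol(x)$ has coefficients in $\F_q$, its root set $Z_q$ is stable under $x \mapsto x^q$, so these already lie in $Z_q$. Hence every eigenvalue of $\beta$ on $L_n$ is a root of $\auxpolg(x) \bmod q$ which is either in $Z_q$ or a root of $\auxpol^2(x) \bmod q$. Since $\gcd(q, \constC \cdot \operatorname{lc}(f(x))) = 1$, the factors $\auxpol^2(x)/u(x)$ and $\auxpolg(x)/u(x)$ of Definition \ref{Definition_ConstC} stay coprime modulo $q$ and no relevant leading coefficient vanishes, so a common root of $\auxpol^2(x)\bmod q$ and $\auxpolg(x)\bmod q$ is a root of $u(x)\bmod q$, hence of $\auxpol(x)\bmod q$ because $u(x) \mid \auxpol(x)$; this proves the claim. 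Then $\auxpol(\beta)$ is a nilpotent endomorphism of the finite-dimensional $\F_q$-space $L$, so some power $\auxpol(\beta)^M$ vanishes on every layer $D_n/D_{n+1}$; a downward induction on the $\beta$-invariant series $Q = D_1 \geq \cdots \geq D_{c+1} = \{1\}$ using Lemma \ref{Lemma_Concat} now shows $Q$ is an $\auxpol(\beta)$-group.

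The main obstacle is the inductive step of the eigenvalue claim: one must rule out that commutator and $q$-th power subquotients create eigenvalues outside $Z_q$, and this is precisely where both hypotheses are indispensable --- the abelian identity $\auxpolg(x)$ pins the spectrum on every abelian section to the roots of $\auxpolg$, while the coprimality of $q$ with $\constC \cdot \operatorname{lc}(f(x))$ forces the simultaneous condition ``root of $\auxpol^2$ and of $\auxpolg$ modulo $q$'' to collapse to ``root of $\auxpol$ modulo $q$''. The correct bookkeeping of the restricted $[q]$-power operation in the Jennings algebra is a secondary technical point, handled by the Frobenius-stability of $Z_q$.
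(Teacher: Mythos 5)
Your proof is correct, but it follows a genuinely different route from the paper's. The paper forms the graded Lie ring of the \emph{lower central series} $(\Gamma_i(Q))_i$, whose degree-one piece is $Q/[Q,Q]$ rather than $Q/\Phi(Q)$; it then first reduces from $q^e L = 0$ to the case $qL = 0$ (iterating $e$ times at the end), and performs an induction on the nilpotency class that isolates only the top layer $L_c = [L_1, L_{c-1}]$. You instead build the restricted Lie algebra over $\F_q$ from the \emph{Jennings series}, which buys you two things: the degree-one layer is literally $Q/\Phi(Q)$, so the hypothesis plugs in directly, and every layer is already elementary abelian, so the paper's $q^e$-to-$q$ reduction is unnecessary. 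The price is that you must invoke the Jennings--Lazard theorem (the restricted Lie algebra is generated in degree one) and handle the extra $[q]$-power contribution $S$, which you control correctly via Frobenius-stability of $Z_q$. The core arithmetic step is the same in both: eigenvalues on commutators lie in $Z_q\cdot Z_q$ (via Lemma~\ref{Lemma_D2Roots}), eigenvalues on abelian sections are roots of $\auxpolg \bmod q$, and coprimality of $q$ with $\constC$ collapses the intersection back to roots of $u(x) \bmod q$, hence of $\auxpol(x) \bmod q$ since $u(x) \mid \auxpol(x)$; this is exactly the content of Lemma~\ref{Lemma_Reductiond2}, which the paper invokes in ideal-theoretic form.

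Two small points. First, in your dismissal of the constant case you write ``$q \nmid \operatorname{lc}(f(x)) = \auxpol(x)$''; the equality is false in general (e.g.\ $\auxpolg(x) = 3x+1$ has $\auxpol(x) = 1 \neq 3 = \operatorname{lc}(f(x))$). What is true, and is what you need, is that a constant $\auxpol(x)$ is itself a closed divisor of $\auxpolg(x)$ and hence divides the content, hence divides $\operatorname{lc}(\auxpolg(x)) = \operatorname{lc}(f(x))$; so $q \nmid \operatorname{lc}(f(x))$ does give $q \nmid \auxpol(x)$. Second, your statement that eigenvalues of $\beta$ on $L_n/\sum[L_i,L_j]$ are ``among the $q$-th powers'' of eigenvalues on $L_{n/q}$ is slightly off: the map $x \mapsto x^{[q]}$ is $\F_q$-\emph{linear} modulo commutators (since $\lambda^q = \lambda$ on $\F_q$), so after extension of scalars the induced map is $\beta$-equivariant and linear, and the eigenvalues transfer \emph{unchanged}. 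Either reading lands in $Z_q$ (the latter directly, the former via Frobenius-stability), so the conclusion is unaffected, but the clean statement avoids the semilinearity detour.
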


\begin{proof}
	The lower central series $(\Gamma_i(Q))_{i \geq 0}$ of $Q$ naturally gives rise to a finite, graded Lie ring $L := L_1 \oplus L_2 \oplus \cdots \oplus L_c$ of class $c = \operatorname{c}(L)  = \operatorname{c}(Q)$, where $L_i :=  \Gamma_i(Q) / \Gamma_{i+1}(Q)$. Let $\map{\gamma}{L}{L}$ be the Lie ring automorphism that is naturally induced on $L$ by $\beta$. Then $\auxpolg(\gamma)$ vanishes on $L$. Since $\Phi(Q) = Q^q \cdot [Q,Q]$, we also have $\auxpol(\gamma)^{k_0}(L_1) \subseteq q \cdot L_1$, for some sufficiently large $k_0 \in \Z_{\geq 1}$. \\ 
	
	Let us prove that $(L,+)$ is an $\auxpol(\gamma)$-group. We first consider the special case: $q \cdot L = \{0_L\}$. Then $L$ is a Lie algebra over the prime field $\F_q$ and $\auxpol(\gamma)^{k_0}(L_1) = \{0_L\}$. After extending the scalars, we may further assume that the ground field $\F$ of $L$ is algebraically-closed. We proceed by induction on the class $c$ of $L$. If $c \leq 1$, then $L = L_1$ is an $\auxpol(\gamma)$-group by assumption. So we assume $c > 1$. The induction hypothesis yields some $k_1 \in \Z_{\geq 1}$ such that
		$
			\auxpol(\gamma)^{k_1}(L_1 \oplus \cdots \oplus L_{c-1}) = \{0_L\}. \label{Eq_dt}
		$ 
		So the generalized eigenvalues of $\gamma$ on $L_1$ and $L_{c-1}$ are roots of $\auxpol(x)$. 
		The generalized eigenvalues of $\gamma$ on $L_{c} = [L_1,L_{c-1}]$ will therefore be of the form $\lambda \cdot \mu$ with $\auxpol(\lambda) = \auxpol(\mu) = 0$. Since $(q,\operatorname{lc}(f(x))) = 1$, we may apply Lemma \ref{Lemma_D2Roots} to obtain $\auxpol^2(\gamma)^{k_2}(L_{c}) \subseteq \auxpol^2(\gamma)^{k_2}([L_1,L_{c-1}]) = \{0_L\}, \label{Eq_d2t}$ for some sufficiently large $k_2 \in \Z_{\geq 1}$. But, by assumption, we also have $ \auxpolg(\gamma)(L_{c}) = \{0_L\}. \label{Eq_gt}$ So $\auxpol^2(x)^{k_2}$ and $\auxpolg(x)$ are both identities of $\gamma$ on $L_{c}$. {Lemma} \ref{Lemma_Reductiond2} shows that the ideal of $\Z[x]$ that is generated by $\auxpol^2(x)^{k_2}$ and $\auxpolg(x)$ contains the polynomial $\constC^{k_2} \cdot \auxpol(x)^{k_2}$. Lemma \ref{Lemma_IdealOfAbelianIdentities} then implies that $\constC^{k_2} \cdot \auxpol(\gamma)^{k_2}(L_{c}) = \{0_L\}$. Since $(q,\constC)  = 1$, also $\auxpol(\gamma)^{k_3}(L_{c}) = \{0_L\}$. Set $k_3 := \max\{k_1 , k_2\}$. Then we conclude that $\auxpol(\gamma)^{k_3}(L) \subseteq \auxpol(\gamma)^{k_3}(L_1 \oplus \cdots \oplus L_{c-1}) + \auxpol(\gamma)^{k_3}(L_{c}) = \{0_L\}$. We now consider the general case: $q^e \cdot L = \{0_L\}$, where $e \in \Z_{\geq 1}$. Then $q \cdot L$ is a characteristic ideal of $L$ and the additive exponent of the quotient $L / (q \cdot L)$ is $q$. So we can apply the above argument to $L / (q \cdot L)$ in order to conclude that $\auxpol(\gamma)^{k_3}(L) \subseteq q \cdot L$, for some $k_3 \in \Z_{\geq 1}$. But then $\auxpol(\gamma)^{e \cdot k_3}(L) \subseteq q^{e} \cdot L = \{0_L\}$. \\

	We can finally derive that $Q$ is an $\auxpol(\gamma)$-group. A simple induction on $i \in \Z_{\geq 1}$ shows that $\auxpol(\beta)^{i \cdot e \cdot k_3}(Q) \subseteq \Gamma_{i+1}(Q)$. So  $\auxpol(\beta)^{c \cdot e \cdot k_3}(Q) \subseteq \Gamma_{c+1}(Q) = \{1_Q\}$.
\end{proof}

\begin{proposition} \label{Theorem_Aux}
	Let $G$ be a finite, solvable group with an automorphism $\alpha$ and let $f(x)$ be an abelian identity of $\alpha$. Suppose that $f(0) \neq 0$ and $\gcd(|G|,f(1) \cdot \operatorname{lc}(f(x)) \cdot \constA \cdot \constC) = 1$. Then the automorphism $\map{{\alpha}^{\|f(x)\|}}{\underline{\operatorname{F}_1}(G)}{\underline{\operatorname{F}_1}(G)}$ is fixed-point-free and $\underline{\operatorname{F}_1}(G)$ is an $\auxpol({\alpha}^{\|f(x)\|})$-group. 
\end{proposition}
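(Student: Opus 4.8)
The plan is to first record a few reformulations, then settle the fixed-point-free assertion directly, and finally reduce the $\auxpol(\beta)$-group assertion — by peeling off characteristic layers with Lemma \ref{Lemma_Concat} and invoking Proposition \ref{Proposition_QGroup} — to a single statement about an elementary abelian section of $G$, which is where the genuine work lies. Write $m := \|f(x)\|$, $\beta := \alpha^m$ and $N := \underline{\operatorname{F}_1}(G)$, and recall $f(x) = \auxpolg(x^m)$. Then $f(\gamma)$ and $\auxpolg(\gamma^m)$ are the same map for every endomorphism $\gamma$, so $\auxpolg(\beta) = f(\alpha)$ and $\auxpolg(x)$ is an abelian identity of $\beta$; moreover $\operatorname{lc}(f(x)) = \operatorname{lc}(\auxpolg(x))$, $f(1) = \auxpolg(1)$ and $\auxpolg(0) = f(0) \neq 0$, and the coprimality hypothesis is inherited by every subgroup and section of $G$. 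Since $N$ is characteristic in $G$ — and since a characteristic subgroup or section of a characteristic section of $G$ is again a characteristic section of $G$ (every automorphism of $G$ fixes the defining characteristic subgroups, hence acts on the section and fixes its characteristic subgroups) — the polynomial $f(x)$, equivalently $\auxpolg(x)$, remains an abelian identity of the automorphism induced by $\alpha$, resp.\ $\beta$, on every characteristic section of $G$; in particular $\auxpolg(\beta)$ vanishes on all abelian characteristic sections of $G$ contained in $N$.

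For fixed-point-freeness I would set $C := C_N(\beta)$. Since $\alpha$ commutes with $\beta = \alpha^m$, the subgroup $C$ is $\alpha$-invariant, and every power of $\beta$ acts trivially on it, so for $c \in C$ the ordered product defining $f(\alpha)(c)$ collapses to $c^{\auxpolg(1)} = c^{f(1)}$; thus $f(\alpha)$ restricts to the power map $x \mapsto x^{f(1)}$ on $C$. By Lemma \ref{Lemma_AbelianPowerIdentity}, $f(\alpha)$ is nilpotent on $G$, say $f(\alpha)^k(G) = \{1\}$ for some $k$, so $\exp(C)$ divides $f(1)^k$; as $\exp(C)$ divides $|C|$, which divides $|G|$, and $\gcd(|G|,f(1)) = 1$, we get $C = \{1\}$, i.e.\ $\beta$ is fixed-point-free on $N$.

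For the $\auxpol(\beta)$-group assertion I would use the lower Fitting series $N = \underline{\operatorname{F}_0}(N) \supseteq \underline{\operatorname{F}_1}(N) \supseteq \cdots$ of $N$. Each term is characteristic in $N$, hence in $G$ and $\beta$-invariant, and each factor $W := \underline{\operatorname{F}_i}(N)/\underline{\operatorname{F}_{i+1}}(N)$ is a nilpotent characteristic section of $G$ satisfying $W = [H,W]$, where $H$ is $G$ if $i = 0$ (using $N = [G,N]$) and $\underline{\operatorname{F}_{i-1}}(N)$ if $i \geq 1$ (using that $\underline{\operatorname{F}_{i+1}}(N)$ is the nilpotent residual of $\underline{\operatorname{F}_i}(N)$). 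By Lemma \ref{Lemma_Concat} it suffices to prove each such $W$ is an $\auxpol(\beta)$-group; decomposing the nilpotent $W$ into its Sylow subgroups $W_q$ (characteristic in $W$, hence characteristic sections of $G$) and applying Lemma \ref{Lemma_Concat} again reduces this to a single $W_q$. For that prime $q \mid |G|$ one has $\gcd(q,\operatorname{lc}(f(x)) \cdot \constC) = 1$ and $\auxpolg(x)$ is an abelian identity of $\beta$ on the characteristic section $W_q$, so Proposition \ref{Proposition_QGroup} reduces everything to showing that $V := W_q/\Phi(W_q)$ is an $\auxpol(\beta)$-group — where now $V$ is elementary abelian, $V = [H,V]$, and $\beta$ is fixed-point-free on $V$ by Lemma \ref{Lemma_UniformHallQuotient}(iii).

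Since $\beta$ acts $\F_q$-linearly on $V$, the remaining claim is equivalent to: every eigenvalue of $\beta$ on $V \otimes_{\F_q} \overline{\F_q}$ is a root of $\auxpol(x) \bmod q$. From $\auxpolg(\beta) = 0$ on $V$ these eigenvalues are roots of $\auxpolg(x) \bmod q$ (in particular none is $1$, as $\auxpolg(1) = f(1)$ is coprime to $q$); what must be excluded is a \emph{removable} eigenvalue, i.e.\ a root of $\auxpolg(x)$ that is not a root of $\auxpol(x) = \Delta(f(x))$, and this will be the main obstacle. The relation $V = [H,V]$ with $H$ lying inside the nilpotent residual is decisive here: the commutator structure forces the set of eigenvalues of $\beta$ occurring on $V$ — and on the characteristic layers above it — to satisfy the three conditions defining a closed divisor of $\auxpolg(x) \bmod q$ (multiplicative closedness inside the root set, absorption of the powerful divisors, coprimality to the cofactor), so that, $\auxpol(x)$ being the minimal closed divisor — a feature preserved modulo each prime dividing $|G|$ by Lemmas \ref{Lemma_ConstA} and \ref{Lemma_ConstC} and the coprimality to $\constA$, $\constC$ and $\operatorname{lc}(f(x))$ — this eigenvalue set is forced into the root set of $\auxpol(x) \bmod q$. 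Concretely I would encode the $\beta$-action on the layers of $N$ as a graded Lie ring, use Theorem \ref{Theorem_Embedding} to embed it into a $(\overline{\Q}^\times,\cdot)$-graded Lie ring supported on the relevant root set, and control that support via Theorems \ref{Theorem_AF} and \ref{Theorem_BoundField}, with the fixed-point-freeness of $\beta$ keeping the root set arithmetically-free; granting this, the proposition follows from the reductions above.
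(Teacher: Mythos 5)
Your fixed-point-freeness argument is fine and essentially equivalent to the paper's (the paper argues on all of $G$ rather than just $C_N(\beta)$, but both hinge on $\gcd(|G|,f(1))=1$). Your preliminary reductions — peel characteristic layers by Lemma \ref{Lemma_Concat}, split nilpotent sections into Sylow subgroups, pass to the Frattini quotient via Proposition \ref{Proposition_QGroup} — are sound and broadly parallel to the paper's Claims $1$--$5$, although the paper organizes this differently: it runs an induction on $|G|$ that reduces to the case of a group of Fitting height exactly $2$ with $\underline{\operatorname{F}_1}(G)=\overline{\operatorname{F}_1}(G)=F$ elementary abelian and $G/F$ elementary abelian of coprime order. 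This organizational difference turns out to matter, see below.

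The genuine gap is in the final step, which you correctly identify as ``where the genuine work lies'' but then do not prove. You must show that on the elementary-abelian $q$-section $V$ the eigenvalues of $\overline{\beta}$ are roots of $\auxpol(x)$ and not merely of $\auxpolg(x)$, and you propose to do so by invoking the graded-Lie-ring machinery (Theorems \ref{Theorem_Embedding}, \ref{Theorem_AF}, \ref{Theorem_BoundField}) together with the closed-divisor axioms and the coprimality to $\constA$, $\constC$, $\operatorname{lc}(f(x))$. That machinery is built to bound nilpotency class and derived length — it is used in the paper only in Theorem \ref{Theorem_DetailedMain}(b) — and says nothing about \emph{which} eigenvalues of $\overline{\beta}$ occur; it cannot by itself kill the ``removable'' roots of $\auxpolg(x)$. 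The paper's actual mechanism is quite different and is the one your sketch is missing: because it has reduced to Fitting height exactly $2$, it still has the coprime layer $P \leq G/F$ in play, a $\beta$-invariant Sylow-$p$ subgroup acting fixed-point-freely on $V=F$. It simultaneously diagonalizes $\overline{P}$ to get $V=\bigoplus_\chi V_\chi$, shows via the fixed-point-freeness of $\overline{\beta}$ on $\overline{P}$ (and Lemma \ref{Lemma_UniformHallQuotient}) that $\overline{\beta}(V_\chi)\neq V_\chi$ for every $\chi$, hence every orbit has length $n\geq 2$; then for $v\in V_\chi$ the vanishing of $\auxpolg(\overline{\beta})(v)$ splits along the orbit into the vanishing of each partial sum $\auxpolg_{n,i}(\overline{\beta})(v)$, and the Bezout identity of Lemma \ref{Lemma_RhoDef} gives $\constA\cdot\auxpol(\overline{\beta})(v)=0$, whence $\auxpol(\overline{\beta})(v)=0$ by $\gcd(q,\constA)=1$. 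Your reduction, by descending through the lower Fitting series \emph{inside} $N$, does not retain an explicit coprime complement acting on the layer $V$, so there is no diagonalizable group whose character spaces $\overline{\beta}$ permutes without fixed blocks; the relation $V=[H,V]$ that you record does not substitute for this, and nothing in your sketch produces the orbit-length-$\geq 2$ phenomenon or the partial-sum/Bezout step that is the actual engine of the proof. As written, the proposal reduces the proposition to an assertion that is not established.
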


The first statement is easy to prove. Let us use the abbreviation $\beta := \alpha^{\|f(x)\|}$. Then $\auxpolg(x)$ is an abelian identity of $\beta$. Since $G$ is solvable, we may apply Lemma \ref{Lemma_AbelianPowerIdentity}. So some natural power of $\auxpolg(x)$, say $\auxpolg(x)^{l_0}$, is an identity of $\beta$. Any fixed-point $x$ of $\beta$ therefore satisfies $x^{\auxpolg(1)^{l_0}}$. Since $\gcd(|G|,\auxpolg(1)) = 1$, we conclude that $x = 1$. So $\beta$ is a fixed-point-free automorphism of $G$. The second statement is more difficult to prove. We proceed by induction on $|G|$. If $|G| = 1$, then $\underline{\operatorname{F}_1}(G) = \{1\}$, so that there is nothing to prove. So we assume that $|G| > 1$. We follow the strategy of Higman in \cite{HigmanGroupsAndRings}.

\begin{claim}
	We may assume that $G$ has Fitting height exactly $2$.
\end{claim}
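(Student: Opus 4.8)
The plan is to reduce the general statement to the case of Fitting height $2$ by peeling off the top of the Fitting series and invoking the inductive hypothesis on a proper quotient, together with the concatenation Lemma~\ref{Lemma_Concat}. Write $\underline{\operatorname{F}_1}(G) = N$ for the first term of the \emph{lower} Fitting series, so that $G/N$ has strictly smaller Fitting height than $G$. If the Fitting height of $G$ is $0$ then $G = \{1\}$, handled by the base case; if it is $1$ then $G$ is nilpotent and $N = \{1\}$ (as $G$ equals its own Fitting subgroup), so the conclusion is trivial. So we may assume the Fitting height of $G$ is at least $2$. The idea is: if the Fitting height is $\geq 3$, produce a proper $\alpha$-invariant quotient on which the hypotheses are inherited and whose first lower-Fitting subgroup \emph{maps onto} $N$ (or relates to it controllably), so that the conclusion for $G$ follows from the conclusion for that smaller group.

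Concretely, first I would verify that the hypotheses are inherited by $\alpha$-invariant sections. The polynomial $f(x)$ remains an abelian identity of the induced automorphism on any characteristic (hence any $\alpha$-invariant) section, since abelian identities are defined via the abelian characteristic sections and these pull back; the arithmetic condition $\gcd(|G|, f(1)\cdot\operatorname{lc}(f(x))\cdot\constA\cdot\constC) = 1$ passes to sections because their order divides $|G|$; and $f(0)\neq 0$ is a condition on $f$ alone. Next, let $M := \underline{\operatorname{F}_2}(G)$ be the second term of the lower Fitting series. If the Fitting height of $G$ is exactly $2$ we are in the desired case, so suppose it is $\geq 3$, i.e.\ $M \neq \{1\}$. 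Then $G/M$ has Fitting height $\geq 1$ and smaller than that of $G$; applying the inductive hypothesis (on $|G|$) to $G/M$ with the induced automorphism $\overline\alpha$ shows $\overline\alpha^{\|f(x)\|}$ is fixed-point-free on $\underline{\operatorname{F}_1}(G/M)$ and that $\underline{\operatorname{F}_1}(G/M)$ is an $\auxpol(\overline\alpha^{\|f(x)\|})$-group. The key structural point is that $\underline{\operatorname{F}_1}(G/M) = N/M$ (the lower Fitting series is compatible with this quotient since $M \supseteq \underline{\operatorname{F}_2}(G)$ is a term of it), so $N/M$ is an $\auxpol(\beta)$-group where $\beta := \alpha^{\|f(x)\|}$. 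It then remains to show $M$ itself is an $\auxpol(\beta)$-group and invoke Lemma~\ref{Lemma_Concat} to conclude $N$ is an $\auxpol(\beta)$-group. But $M = \underline{\operatorname{F}_2}(G)$ is contained in $N = \underline{\operatorname{F}_1}(G)$ and has Fitting height one less, so a further application of the induction on $|G|$ (or a direct appeal to the statement for the pair $(N, \alpha|_N)$, which has smaller Fitting height than $G$) gives that $N$ — and a fortiori $M$ — is an $\auxpol(\beta)$-group; combining via Lemma~\ref{Lemma_Concat} finishes the reduction.

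The delicate part, and the main obstacle, is the bookkeeping around which induction parameter actually decreases and whether the fixed-point-free hypothesis survives the passage to $G/M$: one must ensure $\overline\alpha$ is still fixed-point-free, which follows from Lemma~\ref{Lemma_UniformHallQuotient}(iii) since $M$ is $\alpha$-invariant, and one must be careful that "Fitting height exactly $2$" is genuinely the residual case — i.e.\ that after stripping $\underline{\operatorname{F}_2}(G)$ one cannot further reduce. The honest formulation of the claim is therefore: it suffices to prove the $\auxpol(\beta)$-group conclusion under the extra assumption that $\underline{\operatorname{F}_2}(G) = \{1\}$, equivalently that $G$ has Fitting height at most $2$ (the height-$\leq 1$ case being trivial), because the general case then follows by the quotient argument above combined with Lemma~\ref{Lemma_Concat}. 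I expect the author's proof to formalize exactly this: reduce modulo $\underline{\operatorname{F}_2}(G)$, apply induction on $|G|$, and glue with the concatenation lemma.
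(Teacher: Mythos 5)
Your overall plan matches the paper's: apply the induction hypothesis (on $|G|$) to the proper quotient $G/\underline{\operatorname{F}_2}(G)$ to get that $\underline{\operatorname{F}_1}(G)/\underline{\operatorname{F}_2}(G)$ is an $\auxpol(\beta)$-group, apply it to the proper subgroup $\underline{\operatorname{F}_1}(G)$ to get that $\underline{\operatorname{F}_2}(G)=\underline{\operatorname{F}_1}\bigl(\underline{\operatorname{F}_1}(G)\bigr)$ is an $\auxpol(\beta)$-group, and glue via Lemma~\ref{Lemma_Concat}. That is exactly what the paper does, just with the two inductive applications taken in the other order.

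However, your last sentence misstates what the second application yields. You write that invoking the statement for $(N,\alpha|_N)$ ``gives that $N$ --- and a fortiori $M$ --- is an $\auxpol(\beta)$-group.'' This is not what Proposition~\ref{Theorem_Aux} asserts: applied to $(N,\alpha|_N)$ it concludes that $\underline{\operatorname{F}_1}(N)=M$ is an $\auxpol(\beta)$-group, \emph{not} that $N$ itself is. (If the induction gave $N$ directly, the whole reduction, including Lemma~\ref{Lemma_Concat} and the quotient step, would be superfluous.) The conclusion you actually need for the gluing step --- that $M$ is an $\auxpol(\beta)$-group --- does follow from this application, so the argument is repairable; but as phrased the logic is inverted, and the ``a fortiori'' runs in the wrong direction. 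Correct the sentence to: induction on $N$ gives that $M=\underline{\operatorname{F}_1}(N)$ is an $\auxpol(\beta)$-group; combined with $N/M$ being one, Lemma~\ref{Lemma_Concat} yields that $N$ is one.
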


\begin{proof}
	If the Fitting height is at most $1$, then there is nothing left to prove. So suppose $\underline{\operatorname{F}_2}(G) \neq \{1\}$. Then $\underline{\operatorname{F}_1}(G)$ is an proper, characteristic section of $G$. By the induction hypothesis, we have that $ \underline{\operatorname{F}_1}(\underline{\operatorname{F}_1}(G)) = \underline{\operatorname{F}_2}(G)$ is a $\auxpol(\beta)$-group. Similarly, $G / \underline{\operatorname{F}_2}(G)$ is a proper, characteristic section of $G$, so that $\underline{\operatorname{F}_1}(G / \underline{\operatorname{F}_2}(G)) = \underline{\operatorname{F}_1}(G) / \underline{\operatorname{F}_2}(G)$ is an $\auxpol(\beta)$-group. But then all of $\underline{\operatorname{F}_1}(G)$ is a $\auxpol(\beta)$-subgroup. \let\qed\relax
\end{proof}

\begin{claim}
	We may assume that the first upper Fitting-subgroup $\overline{\operatorname{F}_1}(G)$ of $G$ is a $q$-group, for some prime $q$.
\end{claim}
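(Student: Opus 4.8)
The plan is to strip the primes off $\overline{\operatorname{F}_1}(G) = \operatorname{F}(G)$ one at a time by passing to characteristic quotients. Write $\beta := \alpha^{\|f(x)\|}$. If $\operatorname{F}(G)$ is already a $q$-group there is nothing to do, so suppose its order is divisible by the $r \geq 2$ distinct primes $q_1, \dots, q_r$. Since $G$ has Fitting height $2$, we have $\operatorname{F}(G) = O_{q_1}(G) \times \cdots \times O_{q_r}(G)$, and for each $i$ the subgroup $N_i := \prod_{j \neq i} O_{q_j}(G)$ is non-trivial and characteristic in $G$, hence normal and $\beta$-invariant.

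First I would check that each quotient $G_i := G / N_i$ still satisfies the hypotheses of Proposition \ref{Theorem_Aux} for the induced automorphism $\overline{\alpha}$. It is finite and solvable; $f(0) \neq 0$ is unchanged; $\gcd(|G_i|, f(1)\cdot\operatorname{lc}(f(x))\cdot\constA\cdot\constC) = 1$ because $|G_i|$ divides $|G|$; and $f(x)$ remains an abelian identity of $\overline{\alpha}$, since $N_i$ is characteristic, so any characteristic abelian section of $G_i$ is induced by a characteristic abelian section of $G$, on which $f(\alpha)$ already vanishes by hypothesis. As $N_i \neq \{1\}$ we have $|G_i| < |G|$, so the induction hypothesis of Proposition \ref{Theorem_Aux} yields that $\underline{\operatorname{F}_1}(G_i)$ is an $\auxpol(\overline{\beta})$-group. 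By the standard behaviour of the lower Fitting series under quotients, $\underline{\operatorname{F}_1}(G_i) = \underline{\operatorname{F}_1}(G)\, N_i / N_i$, so there is an integer $k_i \geq 1$ with $\auxpol(\beta)^{k_i}(\underline{\operatorname{F}_1}(G)) \subseteq N_i$.

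It remains to glue these bounds together. Since $\underline{\operatorname{F}_1}(G)$ is characteristic and each $N_i$ is $\beta$-invariant, once $\auxpol(\beta)^{k_i}$ sends $\underline{\operatorname{F}_1}(G)$ into $N_i$ so does $\auxpol(\beta)^m$ for every $m \geq k_i$; hence for $K := \max_i k_i$ we obtain $\auxpol(\beta)^K(\underline{\operatorname{F}_1}(G)) \subseteq \bigcap_{i=1}^r N_i = \{1\}$, the last equality holding because an element of $\operatorname{F}(G) = \prod_j O_{q_j}(G)$ lies in all the $N_i$ only if each of its $q_j$-components is trivial. Thus $\underline{\operatorname{F}_1}(G)$ is an $\auxpol(\beta)$-group, which is exactly the conclusion of Proposition \ref{Theorem_Aux} for $G$; so we may indeed assume that $\overline{\operatorname{F}_1}(G)$ is a $q$-group.

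The work here is bookkeeping rather than conceptual. The points that need care are: that the reduction is to strictly smaller groups (which is why it only handles the case $r \geq 2$, leaving precisely the $q$-group case); that ``abelian identity'' genuinely descends to the quotients $G/N_i$ — this is where characteristic-ness of $N_i$ is essential; and that the lower Fitting subgroup commutes with these quotients, so that the partial conclusions obtained on the $G/N_i$ can be assembled inside $G$.
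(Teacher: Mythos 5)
Your proposal is correct and follows essentially the same route as the paper: the paper writes $\overline{\operatorname{F}_1}(G)$ as a direct product of two proper, non-trivial characteristic subgroups $A$ and $B$, applies the induction hypothesis to $G/A$ and $G/B$, and intersects, which is just the two-factor version of your prime-by-prime decomposition into the $N_i$. Your extra verifications (that abelian identities and the gcd condition descend to the characteristic quotients, and that the nilpotent residual commutes with these quotients) are left implicit in the paper but are exactly the right points to check.
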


\begin{proof}
	Otherwise, the nilpotent group $\overline{\operatorname{F}_1}(G)$ is the direct product of two proper, non-trivial, characteristic subgroups $A$ and $B$. Then the induction hypothesis implies that $\underline{\operatorname{F}_1}(G/A)$ and $\underline{\operatorname{F}_1}(G/B)$ are $\auxpol(\beta)$-groups. By definition, there then exist $k_1,k_2 \in \Z_{\geq 1}$ such that $\auxpol(\beta)^{k_1}(\underline{\operatorname{F}_1}(G)) \subseteq A$ and $\auxpol(\beta)^{k_2}(\underline{\operatorname{F}_1}(G)) \subseteq B$. Since $A \cap B = \{1\}$, we conclude that also $\underline{\operatorname{F}_1}(G)$ is a $\auxpol(\beta)$-group. \let\qed\relax
\end{proof}

\begin{claim}
	We may assume that the quotient $G/\overline{\operatorname{F}_1}(G)$ is an elementary-abelian $p$-group, for some prime $p \neq q$.
\end{claim}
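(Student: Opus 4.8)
The plan is to argue by a dichotomy, exactly as in the two preceding claims: either the non-trivial nilpotent group $H := G/\overline{\operatorname{F}_1}(G)$ is \emph{forced} to be an elementary-abelian $p$-group with $p \neq q$, or else we can already conclude that $\underline{\operatorname{F}_1}(G)$ is an $\auxpol(\beta)$-group and the proposition is proved for this $G$. Put $Q := \overline{\operatorname{F}_1}(G)$; by the two previous claims $Q$ is a $q$-group and $G$ has Fitting height $2$, so $H = G/Q$ is indeed a non-trivial nilpotent group. Recall also that $\underline{\operatorname{F}_1}(G)$ is the nilpotent residual of $G$, so that $\underline{\operatorname{F}_1}(G) \subseteq Q$ because $H$ is nilpotent.

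First I would handle the case in which $H$ has a characteristic subgroup $\overline{M}$ with $1 \neq \overline{M} \neq H$. Let $M \leq G$ be the preimage of $\overline{M}$; since $Q$ is characteristic in $G$, so is $M$, and $Q \subsetneq M \subsetneq G$ with $G/M \cong H/\overline{M}$ nilpotent. Being a characteristic subgroup of $G$, the group $M$ inherits all the hypotheses of Proposition~\ref{Theorem_Aux}: the coprimality assumption passes to the divisor $|M|$ of $|G|$, and $f(x)$ remains an abelian identity of the restriction of $\alpha$ to $M$, since every characteristic section of $M$ is a characteristic section of $G$. As $|M| < |G|$, the induction hypothesis gives that $\underline{\operatorname{F}_1}(M)$ is an $\auxpol(\beta)$-group. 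Here $\underline{\operatorname{F}_1}(M) \neq 1$: otherwise $M$ would be a normal nilpotent subgroup of $G$ strictly containing the Fitting subgroup $Q$. Hence $G/\underline{\operatorname{F}_1}(M)$ is a \emph{proper} characteristic quotient of $G$, and applying the induction hypothesis to it shows that $\underline{\operatorname{F}_1}(G)/\underline{\operatorname{F}_1}(M) = \underline{\operatorname{F}_1}\big(G/\underline{\operatorname{F}_1}(M)\big)$ is an $\auxpol(\beta)$-group as well. Lemma~\ref{Lemma_Concat} now patches the two halves together and yields that $\underline{\operatorname{F}_1}(G)$ is an $\auxpol(\beta)$-group, so there is nothing left to prove.

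So we may assume that $H$ has no proper non-trivial characteristic subgroup. A non-trivial nilpotent group with this property must be an elementary-abelian $p$-group: if $|H|$ had two distinct prime divisors, the Sylow subgroups of $H$ would be proper non-trivial characteristic subgroups, and if $H$ were a non-elementary-abelian $p$-group, then $\Phi(H)$ would be one. Finally $p \neq q$, for if $q$ divided $|H|$ then the preimage in $G$ of the Sylow $q$-subgroup of $H$ would be a normal nilpotent subgroup of $G$ strictly containing the Fitting subgroup $Q$, which is impossible. I expect the only delicate point to be the bookkeeping in the middle paragraph: one must take $\overline{M}$ \emph{characteristic} rather than merely $\alpha$-invariant, so that abelian identities descend to $M$ and to $G/\underline{\operatorname{F}_1}(M)$; one must observe $\underline{\operatorname{F}_1}(M) \subseteq \underline{\operatorname{F}_1}(G)$ so that the identification $\underline{\operatorname{F}_1}(G)/\underline{\operatorname{F}_1}(M) = \underline{\operatorname{F}_1}(G/\underline{\operatorname{F}_1}(M))$ is legitimate; and one must check $\underline{\operatorname{F}_1}(M) \neq 1$ so that the quotient $G/\underline{\operatorname{F}_1}(M)$ is genuinely smaller than $G$. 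Once these are in place, Lemma~\ref{Lemma_Concat} closes the argument exactly as in the two previous claims.
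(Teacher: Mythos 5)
Your proof is correct and follows essentially the same strategy as the paper: take the preimage $M$ of a proper nontrivial characteristic subgroup of $G/\overline{\operatorname{F}_1}(G)$, apply the induction hypothesis to $M$ and to $G/\underline{\operatorname{F}_1}(M)$, and patch with Lemma~\ref{Lemma_Concat}; if no such subgroup exists, conclude characteristic simplicity and hence the elementary-abelian $p$-group structure with $p \neq q$. The only cosmetic difference is that you show $\underline{\operatorname{F}_1}(M) \neq 1$ directly from $M \supsetneq \overline{\operatorname{F}_1}(G)$, whereas the paper runs the contrapositive (if $\underline{\operatorname{F}_1}(E) = 1$ then $E$ is nilpotent and hence equals $\overline{\operatorname{F}_1}(G)$); these are the same argument.
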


\begin{proof}
	Let $E/\overline{\operatorname{F}_1}(G)$ be a proper, characteristic subgroup of $G/\overline{\operatorname{F}_1}(G)$ and lift it to a proper characteristic subgroup $E$ of $G$ containing $\overline{\operatorname{F}_1}(G)$. The induction hypothesis forces $\underline{\operatorname{F}_1}(E)$ to be an $\auxpol(\beta)$-group. If $\underline{\operatorname{F}_1}(E) \neq \{1\}$, then the induction hypothesis forces $\underline{\operatorname{F}_1}(G/ \underline{\operatorname{F}_1}(E)) = \underline{\operatorname{F}_1}(G) / \underline{\operatorname{F}_1}(E)$ to be a $\auxpol(\beta)$-group. But then also $\underline{\operatorname{F}_1}(G)$ is a $\auxpol(\beta)$-group and our proof is done. So we may suppose that $\underline{\operatorname{F}_1}(E) = \{1\}$. This means that the normal subgroup $E$ is nilpotent and contained in the maximal nilpotent normal subgroup $\overline{\operatorname{F}_1}(G)$ of $G$. So $E / \overline{\operatorname{F}_1}(G)$ is the trivial group. We conclude that $G / \overline{\operatorname{F}_1}(G)$ is characteristically-simple. Since $G$ is solvable, the quotient is an elementary-abelian $p$-group, for some prime $p$. Since $G$ has Fitting height exactly $2$, it is not nilpotent, and $p \neq q$. \let\qed\relax
\end{proof}

\begin{claim}
	We may assume that $\underline{\operatorname{F}_1}(G) = \overline{\operatorname{F}_1}(G).$
\end{claim}

\begin{proof}
	Otherwise, the Sylow-$p$ subgroup of the nilpotent group $G/\underline{\operatorname{F}_1}(G)$ is proper, non-trivial, and characteristic so that it lifts to a proper, characteristic subgroup $E$ of $G$ that is \emph{not} a $q$-group. The induction hypothesis implies that $\underline{\operatorname{F}_1}(E)$ is an $\auxpol(\beta)$-group. If it is non-trivial, then the induction hypothesis also claims that $\underline{\operatorname{F}_1}(G/\underline{\operatorname{F}_1}(E)) = \underline{\operatorname{F}_1}(G) / \underline{\operatorname{F}_1}(E)$ is a $\auxpol(\beta)$-group. But in this case all of $\underline{\operatorname{F}_1}(G)$ is a $\auxpol(\beta)$-group, and we are done. The other case cannot occur, since it would imply that $E \subseteq\overline{\operatorname{F}_1}(G)$ is a $q$-group. \let\qed\relax
\end{proof}


\begin{claim}
	We may assume that $F := \underline{\operatorname{F}_1}(G) = \overline{\operatorname{F}_1}(G)$ is an elementary-abelian $q$-group. 
\end{claim}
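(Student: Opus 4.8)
The plan is to dispose of the case in which $F$ is \emph{not} elementary abelian by completing the proof of Proposition~\ref{Theorem_Aux} outright there, so that thereafter one may assume $\Phi(F) = \{1\}$. Since $F = \underline{\operatorname{F}_1}(G)$ is characteristic in $G$, so is its Frattini subgroup $\Phi(F)$; in particular $\Phi(F)$ is invariant under $\alpha$ and under $\beta = \alpha^{\|f(x)\|}$. If $\Phi(F) = \{1\}$ then, $F$ being a $q$-group by the preceding claim, $F = F/\Phi(F)$ is already elementary abelian and there is nothing to prove. So I would assume $\Phi(F) \neq \{1\}$ and aim to conclude directly that $\underline{\operatorname{F}_1}(G)$ is an $\auxpol(\beta)$-group.

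Under this assumption, $G/\Phi(F)$ is a proper section of $G$ on which the induced $\overline{\alpha}$ is an automorphism; moreover $f(x)$ is still an abelian identity of $\overline{\alpha}$, we still have $f(0) \neq 0$, and the coprimality hypothesis of Proposition~\ref{Theorem_Aux} survives because $|G/\Phi(F)|$ divides $|G|$. The induction hypothesis therefore applies to $(G/\Phi(F),\overline{\alpha},f(x))$ and gives that $\underline{\operatorname{F}_1}(G/\Phi(F))$ is an $\auxpol(\beta)$-group. Since $\Phi(F) \subseteq F = \underline{\operatorname{F}_1}(G)$, the general identity $\underline{\operatorname{F}_1}(G/N) = \underline{\operatorname{F}_1}(G) N/N$ gives $\underline{\operatorname{F}_1}(G/\Phi(F)) = F/\Phi(F)$, so the Frattini-quotient of $F$ is an $\auxpol(\beta)$-group.

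Now I would invoke Proposition~\ref{Proposition_QGroup} with the $q$-group $Q := F$ and the restriction of $\beta$ to $F$: its hypotheses hold, since $\gcd(q,\operatorname{lc}(f(x)) \cdot \constC) = 1$ (because $q$ divides $|G|$), $\auxpolg(x)$ is an abelian identity of $\beta$ and hence of its restriction to the characteristic subgroup $F$, and $F/\Phi(F)$ is an $\auxpol(\beta)$-group by the previous step. Its conclusion is that $F$ itself is an $\auxpol(\beta)$-group; as $F = \underline{\operatorname{F}_1}(G)$, this is precisely the statement of Proposition~\ref{Theorem_Aux}, finishing the proof when $\Phi(F) \neq \{1\}$. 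Hence we may assume $\Phi(F) = \{1\}$, i.e.\ $F$ is elementary abelian. The real content of this step is already packed into Proposition~\ref{Proposition_QGroup}; here the only delicate point is the identification $\underline{\operatorname{F}_1}(G/\Phi(F)) = F/\Phi(F)$, which uses $F = \underline{\operatorname{F}_1}(G)$ (and $F = \overline{\operatorname{F}_1}(G)$ for the earlier reductions to remain valid), the rest being routine checks of hypotheses.
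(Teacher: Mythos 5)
Your proposal follows the paper's argument exactly: pass to $G/\Phi(F)$ by induction to see that $F/\Phi(F)$ is an $\auxpol(\beta)$-group, then invoke Proposition~\ref{Proposition_QGroup} to upgrade this to all of $F = \underline{\operatorname{F}_1}(G)$. You have merely spelled out the routine verifications the paper leaves implicit (the identification $\underline{\operatorname{F}_1}(G/\Phi(F)) = F/\Phi(F)$ and the coprimality hypothesis for $q$); this is correct and essentially identical to the paper's proof.
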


\begin{proof}
	Otherwise, the Frattini-subgroup $\Phi(F) $ of $F$ is non-trivial. The induction hypothesis then forces $\underline{\operatorname{F}_1}(G/\Phi(F)) = F/\Phi(F)$ to be an $\auxpol(\beta)$-group. Proposition \ref{Proposition_QGroup} then guarantees that all of $F$ is an $\auxpol(\beta)$-group, and we are done. \let\qed\relax
\end{proof}

Lemma \ref{Lemma_UniformHallQuotient} provides a Sylow-$p$ subgroup $P$ of $G$ satisfying $\beta(P) = P$. Then $G = F \rtimes P$ and $G \rtimes \langle \beta \rangle = F \rtimes ( P \rtimes \langle \beta \rangle )$. So $P \rtimes \langle \beta \rangle$ acts on $F$ via conjugation within $G \rtimes \langle \beta \rangle$. We now change our perspective and notation. We identify the abelian section $F$ of $G$ with the additive group of a non-trivial, finite-dimensional vector space $V$ over the prime field $\F_q$. Then $P \rtimes \langle \beta \rangle$ naturally acts on $V$ via the homomorphism $\mapl{\overline{\cdot}}{P \rtimes \langle \beta \rangle}{\operatorname{GL}(V)}{A}{\overline{A}}$. 

\begin{claim} \label{Claim_3Conditions}
	$\overline{P}$ acts fixed-point-freely on $V$, $\overline{\beta}$ normalizes $\overline{P}$ and acts fixed-point-freely on $\overline{P}$, and $\auxpolg(\overline{\beta})(V) = \{0_V\}$.
\end{claim}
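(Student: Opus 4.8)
The plan is to verify the three assertions one at a time, using the structure secured by the previous claims: $F = \underline{\operatorname{F}_1}(G) = \overline{\operatorname{F}_1}(G)$ is a non-trivial elementary-abelian $q$-group, $G/F$ is an elementary-abelian $p$-group with $p \neq q$, $G = F \rtimes P$, the fixed-point-free automorphism $\beta = \alpha^{\|f(x)\|}$ satisfies $\beta(P) = P$, and $F$ has been identified with $V$. I would treat $\auxpolg(\overline{\beta})(V) = \{0_V\}$ first, since it is immediate: $F$ is an abelian characteristic subgroup of $G$, so the abelian identity $f(x)$ of $\alpha$ is in particular an ordered identity of the automorphism that $\alpha$ induces on $V = F$; written additively, this says that $f$ evaluated at that automorphism is the zero endomorphism of $V$. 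As $f(x) = \auxpolg\bigl(x^{\|f(x)\|}\bigr)$ and $\overline{\beta}$ is exactly the $\|f(x)\|$-th power of the automorphism that $\alpha$ induces on $V$, it follows that $\auxpolg(\overline{\beta})$ vanishes on $V$.

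I would next handle the assertions about $\overline{\beta}$ and $\overline{P}$. The restriction of a fixed-point-free automorphism to an invariant subgroup is again fixed-point-free, so $\beta$ acts fixed-point-freely on the $\beta$-invariant subgroup $P$. Since the Fitting subgroup $F = \overline{\operatorname{F}_1}(G)$ of the solvable group $G$ is self-centralizing, we get $C_P(V) = C_P(F) = P \cap C_G(F) = P \cap F = \{1\}$, so $\overline{\cdot}\colon P \to \operatorname{GL}(V)$ is injective. Inside $P \rtimes \langle \beta \rangle$ one has $\beta A \beta^{-1} = \beta(A)$ for $A \in P$, hence conjugation by $\overline{\beta}$ sends $\overline{A}$ to $\overline{\beta(A)}$; in particular $\overline{\beta}$ normalizes $\overline{P}$, and under the isomorphism $P \cong \overline{P}$ the automorphism of $\overline{P}$ induced by $\overline{\beta}$ corresponds to the fixed-point-free automorphism $A \mapsto \beta(A)$ of $P$. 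So $\overline{\beta}$ acts fixed-point-freely on $\overline{P}$.

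The last assertion, that $\overline{P}$ acts fixed-point-freely on $V$ (equivalently $C_V(P) = \{0_V\}$), is the one I expect to be the main obstacle, because it fails without the reduction $\underline{\operatorname{F}_1}(G) = \overline{\operatorname{F}_1}(G)$ and without the coprimality $p \neq q$. Since $F$ and $P$ are abelian and $F \trianglelefteq G$, the commutator subgroup $[F,P]$ is normal in $G$ and the quotient $G/[F,P]$ is abelian, hence nilpotent; therefore $\underline{\operatorname{F}_1}(G) \subseteq [F,P] \subseteq F = \underline{\operatorname{F}_1}(G)$, which forces $[V,P] = [F,P] = V$. Because $|P|$ is a power of $p$, it is invertible in $\F_q$, so the averaging element $e := |P|^{-1} \sum_{A \in P} \overline{A} \in \operatorname{End}_{\F_q}(V)$ is a well-defined idempotent with image $eV = C_V(P)$, and $e$ annihilates every vector $\overline{A}v - v$ (for $A \in P$, $v \in V$), so $[V,P] \subseteq \ker e$. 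From $[V,P] = V$ we obtain $\ker e = V$, whence $C_V(P) = eV = \{0_V\}$, which completes the proof. The only delicate point is thus $C_V(P) = 0$: it is exactly here that the chain of earlier reductions, together with the Maschke-type semisimplicity of the action of $P$ on $V$, is put to use.
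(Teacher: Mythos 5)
Your proof is correct, and in two of the three parts it is essentially the paper's argument: for $\auxpolg(\overline{\beta})(V)=\{0_V\}$ the paper likewise calls it ``a change of notation,'' and for the normalization and fixed-point-free action of $\overline{\beta}$ on $\overline{P}$ the paper also combines the self-centralizing property of $F$ (to get faithfulness of $P\to\operatorname{GL}(V)$) with the fixed-point-freeness of $\beta$ on $P$ via Lemma~\ref{Lemma_UniformHallQuotient}.

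For the first assertion, however, you take a genuinely different route. The paper dispatches it in one line by invoking only that $F=\overline{\operatorname{F}_1}(G)$ is self-centralizing. But self-centralizing-ness alone gives $C_P(V)=\{1\}$ (i.e.\ faithfulness of the representation), whereas what is actually used in the next claim (Claim~\ref{Claim_OrderOrbit}) is $C_V(\overline{P})=\{0_V\}$, and these are not the same thing: e.g.\ $P=\Z/p$ acting on $(\Z/q)^2$ faithfully on one coordinate and trivially on the other is self-centralizing in the resulting group, yet $C_V(P)\neq 0$. Your argument correctly isolates where this is ruled out: the earlier reduction $F=\underline{\operatorname{F}_1}(G)$ forces $[F,P]=F$ (because $G/[F,P]$ is abelian, hence nilpotent, so $\underline{\operatorname{F}_1}(G)\subseteq[F,P]\subseteq F$), and Maschke's decomposition $V=C_V(P)\oplus[V,P]$ (valid since $p\neq q$) then collapses $C_V(P)$ to zero. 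Your averaging-idempotent computation is a clean way to phrase this. So your proof is not only correct but makes explicit which earlier reduction the first assertion actually depends on, which the paper's one-liner leaves implicit.
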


\begin{proof}
	Claim $1$ follows from the fact that $F = \overline{\operatorname{F}_1}(G)$ is self-centralizing. Consider claim $2$. Since $P$ is normal in $P \rtimes \langle \beta \rangle$, its image $\overline{P}$ is normal in $\overline{P \rtimes \langle \beta \rangle}$. Now suppose that $A \in P$ and that $\overline{\beta}^{-1} \circ \overline{A} \circ \overline{\beta}$ acts trivially on $V$. Then the element $\beta^{-1} \cdot A \cdot \beta$ of $P$ acts trivially on $F$. Since $F$ is self-centralizing, we conclude that $\beta^{-1} \cdot A \cdot \beta = 1_P$. So also $\overline{\beta}^{-1} \circ \overline{A} \circ \overline{\beta} = \id_V$. Claim $3$ is simply a change of notation. \let\qed\relax 
\end{proof}

It now suffices to verify that $\auxpol(\overline{\beta})(V) = \{0_V\}$. Let us do this in the remainder of the proof. We may assume that the base field $\F$ of $V$ is algebraically-closed. Since $\gcd(q,|\overline{P}|) = 1$, we can simultaneously diagonalize the elements of $\overline{P}$. So $V = \bigoplus_{\chi} V_{\chi},$ where $\chi$ runs over the characters of $\overline{P}$ with \emph{non-zero} character space $V_\chi$. We recall that, for every $\overline{A} \in \overline{P}$ and every $v \in V_\chi$, we have $\overline{A}(v) = \chi(\overline{A}) \cdot v$. One can then verify that the group $\langle \overline{\beta}\rangle$ naturally acts on the set of character spaces by permutations. 

\begin{claim}
	For every character space $V_\chi$, we have $\overline{\beta}(V_\chi) \neq V_\chi$. \label{Claim_OrderOrbit}
\end{claim}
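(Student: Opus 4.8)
The plan is to translate ``$\overline\beta$ fixes the character space $V_\chi$'' into ``$\chi$ is fixed by the conjugation action of $\overline\beta$ on $\overline P$'', and then to exclude the latter using that $\overline\beta$ acts fixed-point-freely on $\overline P$.

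First I would record two easy points. Since $G$ has Fitting height exactly $2$ it is not nilpotent, so $P$, and hence $\overline P \cong P$, is non-trivial; as $\overline P$ acts fixed-point-freely on $V$ (part of \ref{Claim_3Conditions}), the trivial character does not occur among the $\chi$, i.e. $V_\chi \neq \{0_V\}$ forces $\chi$ to be non-trivial. Moreover $\overline P$ is abelian --- indeed $\overline P \cong P \cong G/\overline{\operatorname{F}_1}(G)$ is even elementary abelian, by the earlier claim identifying this quotient. Now, for $\overline A \in \overline P$ and $v \in V_\chi$ one computes $\overline A(\overline\beta(v)) = \overline\beta\bigl((\overline\beta^{-1}\overline A\,\overline\beta)(v)\bigr) = \chi(\overline\beta^{-1}\overline A\,\overline\beta)\cdot\overline\beta(v)$; since $\langle\overline\beta\rangle$ permutes the character spaces, this shows $\overline\beta(V_\chi) = V_{\chi^{\overline\beta}}$, where $\chi^{\overline\beta}$ is the character $\overline A \mapsto \chi(\overline\beta^{-1}\overline A\,\overline\beta)$. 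Distinct characters have distinct non-zero character spaces, so $\overline\beta(V_\chi) = V_\chi$ is equivalent to $\chi^{\overline\beta} = \chi$, i.e. to $\chi$ being invariant under the automorphism $\sigma$ of $\overline P$ induced by conjugation by $\overline\beta$ (this really is an automorphism of $\overline P$, since $\overline\beta$ normalises $\overline P$, by \ref{Claim_3Conditions}).

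It then remains to show that the only $\sigma$-invariant character of $\overline P$ is the trivial one. By \ref{Claim_3Conditions}, $\sigma$ is fixed-point-free on $\overline P$; so, writing the finite abelian group $\overline P$ additively, the endomorphism $\overline A \mapsto \sigma(\overline A) - \overline A$ is injective, hence surjective. Thus every $\overline A \in \overline P$ can be written as $\sigma(\overline B) - \overline B$ for some $\overline B \in \overline P$, and for a $\sigma$-invariant character $\chi$ we obtain $\chi(\overline A) = \chi(\sigma(\overline B))\cdot\chi(\overline B)^{-1} = \chi(\overline B)\cdot\chi(\overline B)^{-1} = 1$, so $\chi$ is trivial --- contradicting the first point. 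Hence $\overline\beta(V_\chi) \neq V_\chi$ for every character space $V_\chi$. I do not expect a genuine obstacle here: the argument rests only on the fixed-point-freeness of conjugation by $\overline\beta$ on $\overline P$ together with the elementary fact that an injective endomorphism of a finite group is surjective; in particular, the hypothesis $\auxpolg(\overline\beta)(V) = \{0_V\}$ is not used for this claim.
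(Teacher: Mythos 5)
Your proof is correct and takes essentially the same route as the paper's: both reduce the claim to the surjectivity of $\overline{A} \mapsto \overline{A}^{-1}\,\sigma(\overline{A})$ (a consequence of the fixed-point-free action of $\overline{\beta}$ on $\overline{P}$, which the paper extracts from Lemma~\ref{Lemma_UniformHallQuotient}(i) and you re-derive additively using that $\overline{P}$ is abelian), and both conclude that $\overline{\beta}(V_\chi)=V_\chi$ would force $\chi$ to be trivial, contradicting the non-vanishing of $V_\chi$ via the fixed-point-free action of $\overline{P}$ on $V$.
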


\begin{proof}
	Suppose otherwise: $\overline{\beta}(V_\chi) = V_\chi$. For every $\overline{A} \in \overline{P}$ and $v \in V_\chi$, we then have $(\overline{\beta}^{-1} \circ \overline{A} \circ \overline{\beta})(v) = \overline{A}(v)$. So every element of $V_\chi$ is a common fix-point of $\overline{A}^{-1} \cdot (\overline{\beta}^{-1} \cdot \overline{A} \cdot \overline{\beta})$. Since $\overline{\beta}$ acts fixed-point-freely on $\overline{P}$, Lemma \ref{Lemma_UniformHallQuotient} shows that every element of $\overline{P}$ is of the form $\overline{A}^{-1} \cdot (\overline{\beta}^{-1} \cdot \overline{A} \cdot \overline{\beta})$. So every element of $V_\chi$ is a common fix-point of $\overline{P}$. Claim \ref{Claim_3Conditions} then forces $V_\chi = \{0_V\}$. But every character space is non-zero \emph{by definition}. \let\qed\relax
\end{proof}

\begin{claim}
	We have $\auxpol(\overline{\beta})(V) = \{0_V\}$.
\end{claim}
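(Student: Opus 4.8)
The plan is to combine the permutation action of $\langle\overline{\beta}\rangle$ on the character spaces with the identity $\auxpolg(\overline{\beta})(V)=\{0_V\}$ from Claim~\ref{Claim_3Conditions} and the ``$n$-section'' relation of Lemma~\ref{Lemma_RhoDef}. First I would decompose the set of non-zero character spaces into the orbits $O_1,\ldots,O_r$ of $\langle\overline{\beta}\rangle$ and set $W_s:=\bigoplus_{V_\chi\in O_s}V_\chi$, so that $V=W_1\oplus\cdots\oplus W_r$ with each $W_s$ a $\overline{\beta}$-invariant subspace. Since $\auxpol(\overline{\beta})$ preserves each $W_s$, it suffices to show $\auxpol(\overline{\beta})(W_s)=\{0_V\}$ for a fixed $s$; write $W:=W_s$ and label the character spaces in $O_s$ as $V_{\chi_0},\ldots,V_{\chi_{n-1}}$, with indices read modulo $n$, so that $\overline{\beta}(V_{\chi_i})=V_{\chi_{i+1}}$. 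By Claim~\ref{Claim_OrderOrbit} we have $n\geq 2$, and this is precisely where that claim enters: it is the hypothesis that makes Lemma~\ref{Lemma_RhoDef} applicable.

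Next I would exploit the decomposition $\auxpolg(x)=\sum_{i=0}^{n-1}\auxpolg_{n,i}(x)$ of Definition~\ref{Definition_gni_dn}. For $v\in V_{\chi_0}$, each summand $\auxpolg_{n,i}(\overline{\beta})(v)$ lies in the \emph{single} space $V_{\chi_i}$, because $\overline{\beta}^{j}$ maps $V_{\chi_0}$ into $V_{\chi_{j\bmod n}}$ and $\auxpolg_{n,i}$ involves only exponents $j\equiv i\pmod{n}$. As the $V_{\chi_i}$ are in direct sum and $\sum_{i=0}^{n-1}\auxpolg_{n,i}(\overline{\beta})(v)=\auxpolg(\overline{\beta})(v)=0_V$, each summand must vanish; hence $\auxpolg_{n,i}(\overline{\beta})$ annihilates $V_{\chi_0}$ for every $i$. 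Conjugating by the isomorphisms $\overline{\beta}^{k}\colon V_{\chi_0}\to V_{\chi_k}$, and using that $\auxpolg_{n,i}(\overline{\beta})$ commutes with $\overline{\beta}$, I would then deduce that $\auxpolg_{n,i}(\overline{\beta})$ annihilates every $V_{\chi_k}$, hence all of $W$, for each $i\in\{0,\ldots,n-1\}$.

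Now I would invoke Lemma~\ref{Lemma_RhoDef} with this $n$: there exist $c_0(x),\ldots,c_{n-1}(x)\in\Z[x]$ with $\constA\cdot\auxpol(x)=\sum_{i=0}^{n-1}\auxpolg_{n,i}(x)\cdot c_i(x)$. Evaluating at $\overline{\beta}$ and restricting to $W$, every summand equals $c_i(\overline{\beta})\bigl(\auxpolg_{n,i}(\overline{\beta})(W)\bigr)=c_i(\overline{\beta})(\{0_V\})=\{0_V\}$, so $\constA\cdot\auxpol(\overline{\beta})(W)=\{0_V\}$. Finally $q$ divides $|G|$ (recall $F$, identified with $(V,+)$, is a non-trivial $q$-group), while $\gcd(|G|,\constA)=1$ by hypothesis, so $\constA$ is a unit in the field $\F$ of characteristic $q$; therefore $\auxpol(\overline{\beta})(W)=\{0_V\}$. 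Summing over all orbits yields $\auxpol(\overline{\beta})(V)=\{0_V\}$.

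I expect the only genuinely delicate part to be the middle step: keeping the bookkeeping straight so that each section $\auxpolg_{n,i}(\overline{\beta})$ provably respects the grading of $W$ by character spaces and must therefore vanish term by term. Once that is in place, the conclusion is a purely formal consequence of Lemma~\ref{Lemma_RhoDef} together with $\gcd(|G|,\constA)=1$. It is worth noting that Claim~\ref{Claim_OrderOrbit} cannot be dropped: for a $\overline{\beta}$-fixed character space the relevant ``$1$-section'' is $\auxpolg(x)$ itself, and $\auxpolg(x)$ need not divide $\constA\cdot\auxpol(x)$.
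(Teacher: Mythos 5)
Your proof is correct and follows essentially the same route as the paper's: decompose $\auxpolg(\overline{\beta})(v)=0$ into its $n$-sections, observe that the sections land in distinct character spaces of the orbit and must therefore vanish individually, and then apply Lemma~\ref{Lemma_RhoDef} together with $\gcd(\constA,q)=1$ to extract $\auxpol(\overline{\beta})(v)=0$. The only cosmetic difference is that you propagate the vanishing of $\auxpolg_{n,i}(\overline{\beta})$ to the full orbit-sum $W$ before invoking Lemma~\ref{Lemma_RhoDef}, whereas the paper applies it directly on each $V_\chi$; both organizations are valid.
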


\begin{proof}
	Let $V_\chi$ be an arbitrary character space and consider its orbit $V_\chi , \overline{\beta}(V_\chi) =: V_{\chi_1}$, $\ldots$ , $\overline{\beta}^{n-1}(V_\chi) =: V_{\chi_{n-1}}$ under the action of $\langle \overline{\beta} \rangle$, where $n$ is the minimal element of $\Z_{\geq 1}$ such that $\overline{\beta}(V_\chi) = V_{\chi}$. Claim \ref{Claim_OrderOrbit} shows that $n \geq 2$. For every $v \in V_\chi$, we then have  
	$0_V = \auxpolg(\overline{\beta})(v) = \auxpolg_{n,0}(\overline{\beta})(v) + \cdots + \auxpolg_{n,n-1}(\overline{\beta})(v).$ 
	Since the $\auxpolg_{n,i}(\overline{\beta})(v)$ belong to linearly independent spaces $V_{\chi_i}$, each term of this sum vanishes. By definition (cf. Lemma \ref{Lemma_RhoDef}), there exist polynomials $a_0(x) , \ldots , a_{n-1}(x) \in \Z[x]$ such that $\constA \cdot \auxpol(x) = a_0(x) \cdot \auxpolg_{n,0}(x) + \cdots + a_{n-1}(x) \cdot \auxpolg_{n,n-1}(x)$. By evaluating the latter in $\overline{\beta}$ and then in $v$, we obtain
	\begin{eqnarray*}
		\constA \cdot \auxpol(\overline{\beta}) (v) &=& a_0(\overline{\beta}) \circ \auxpolg_{n,n-1}(\overline{\beta})(v) + \cdots + a_{n-1}(\overline{\beta}) \circ \auxpolg_{n,n-1}(\overline{\beta}) (v) \\
		&=& a_0(\overline{\beta}) (0_V) + \cdots + a_{n-1}(\overline{\beta})(0_V) \\
		&=& 0_V.
	\end{eqnarray*}
	Since $\gcd(\constA,q) | \gcd(\HI (f(x)),|G|) = 1$, we also have $\auxpol(\overline{\beta})(v) = 0_V$. Since $v$ was chosen arbitrarily in $V_\chi$, we even have $\auxpol(\overline{\beta})(V_\chi) = \{0_V\}$. Since the character space $V_\chi$ was chosen arbitrarily, we conclude that $\auxpol(\overline{\beta})(V) = \{0_V\}$. 
\end{proof}

This finishes the proof of the proposition. We can easily ``upgrade'' this result.

\begin{theorem} \label{Proposition_Upgrade}
	Let $G$ be a finite, solvable group with a fixed-point-free automorphism $\map{\alpha}{G}{G}$ and let $f(x)$ be an abelian identity of $\alpha$. Suppose that $f(0) \neq 0$ and $\gcd(|G|, f(1) \cdot  \operatorname{lc}(f(x)) \cdot \constA \cdot \constB \cdot \constC) = 1$. Then $\auxpol(x)$ 
	is an abelian identity of the fixed-point-free automorphism $\map{\alpha^{\|f(x)\|}}{\underline{\operatorname{F}_{1}}(G)}{\underline{\operatorname{F}_{1}}(G)}$. 
\end{theorem}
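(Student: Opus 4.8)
The plan is to deduce this from Proposition \ref{Theorem_Aux} together with the reduction packaged in Lemma \ref{Lemma_ConstB} and the fact that abelian identities form an ideal. Write $n := \|f(x)\|$ and $\beta := \alpha^{n}$. Because $f(x) = \auxpolg(x^{n})$, the map $f(\alpha)$ coincides with $\auxpolg(\beta)$ on $G$ and, more generally, induces the map $\auxpolg(\overline{\beta})$ on every $\alpha$-invariant section; in particular $\auxpolg(x)$ is an abelian identity of $\beta$ on $G$. The hypothesis $\gcd(|G|, f(1) \cdot \operatorname{lc}(f(x)) \cdot \constA \cdot \constB \cdot \constC) = 1$ is stronger than what Proposition \ref{Theorem_Aux} requires, so that result applies: the automorphism induced by $\beta$ on $F := \underline{\operatorname{F}_1}(G)$ is fixed-point-free and $F$ is an $\auxpol(\beta)$-group, say $\auxpol(\beta)^{k}(F) = \{1\}$ for some $k \in \Z_{\geq 1}$. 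It remains to upgrade ``$\auxpol(\beta)^{k}$ vanishes on $F$'' to ``$\auxpol(\beta)$ vanishes on every abelian characteristic section of $F$''.

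Next I would observe that an abelian characteristic section $S$ of $F$ is automatically an abelian characteristic section of $G$: since $F$ is characteristic in $G$, any subgroup that is characteristic in $F$ is characteristic in $G$. Consequently $\auxpolg(x)$, being an abelian identity of $\beta$ on $G$, is an abelian identity of the automorphism induced by $\beta$ on $F$; and since $\auxpol(\beta)^{k}(F) = \{1\}$ forces $\auxpol(\overline{\beta})^{k}(S) = \{0_S\}$ on each such $S$, the polynomial $\auxpol(x)^{k}$ is likewise an abelian identity of $\beta$ on $F$. By Lemma \ref{Lemma_IdealOfAbelianIdentities} the abelian identities of this automorphism form an ideal of $\Z[x]$, and by Lemma \ref{Lemma_ConstB} the polynomial $\constB^{k} \cdot \auxpol(x)$ lies in the ideal generated by $\auxpol(x)^{k}$ and $\auxpolg(x)$. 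Hence $\constB^{k} \cdot \auxpol(x)$ is an abelian identity of $\beta$ on $F$.

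Finally I would cancel the factor $\constB^{k}$. Fix an abelian characteristic section $S$ of $F$ and write it additively. The previous step says $\constB^{k} \cdot \bigl( \auxpol(\overline{\beta})(s) \bigr) = 0_S$ for all $s \in S$. Since $\gcd(|G|, \constB) = 1$ and $|S|$ divides $|F|$, multiplication by $\constB^{k}$ is an automorphism of the finite abelian group $S$, so $\auxpol(\overline{\beta})(s) = 0_S$ for every $s \in S$. As $S$ was arbitrary, $\auxpol(x)$ is an abelian identity of $\beta = \alpha^{\|f(x)\|}$ on $\underline{\operatorname{F}_1}(G)$; combined with the fixed-point-freeness furnished by Proposition \ref{Theorem_Aux}, this is exactly the assertion.

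The work here is essentially bookkeeping; the genuine content is already in Proposition \ref{Theorem_Aux} (the Higman-style analysis of character spaces and powerful divisors), which I would invoke as a black box. The one substantive point of the upgrade is that replacing $\auxpol(\beta)^{k}$ by $\auxpol(\beta)$ costs precisely one new factor coprime to $|G|$, namely $\constB$ --- which is exactly why $\constB$ appears in the coprimality hypothesis of Theorem \ref{Proposition_Upgrade} but not in that of Proposition \ref{Theorem_Aux}. I do not anticipate a real obstacle beyond stating carefully, for ordered evaluations, the induced-map identifications $\overline{f(\alpha)} = \auxpolg(\overline{\beta})$ and $\overline{\auxpol(\beta)^{k}} = \auxpol(\overline{\beta})^{k}$, and checking that ``characteristic in characteristic is characteristic'' really does identify the abelian characteristic sections of $F$ with some of those of $G$.
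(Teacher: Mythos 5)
Your proposal is correct and follows essentially the same route as the paper: invoke Proposition \ref{Theorem_Aux}, place $\auxpol(x)^k$ and $\auxpolg(x)$ into the ideal $J$ of abelian identities of $\beta$ on $\underline{\operatorname{F}_1}(G)$, apply Lemma \ref{Lemma_ConstB} to get $\constB^k \cdot \auxpol(x) \in J$, and then cancel $\constB^k$ using the coprimality hypothesis. The only cosmetic difference is in the cancellation: the paper works inside $J$ by noting $|G| \in J$ and applying Bezout to $a|G| + b\constB^k = 1$, whereas you cancel section-by-section by observing that multiplication by $\constB^k$ is an automorphism of each abelian characteristic section of $\underline{\operatorname{F}_1}(G)$.
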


\begin{proof}
	Proposition \ref{Theorem_Aux} establishes that $\map{\beta := \alpha^{\|f(x)\|}}{\underline{\operatorname{F}_1}(G)}{\underline{\operatorname{F}_1}(G)}$ is fixed-point-free and that $\underline{\operatorname{F}_1}(G)$ is an $\auxpol(\beta)$-group. Lemma \ref{Lemma_AbelianPowerIdentity} then states that ${\auxpol(x)}^{k}$ is an identity of $\beta$,
	for some $k \in \Z_{\geq 1}$. Now let $J$ be the ideal of abelian identities of $\beta$ (cf. Lemma \ref{Lemma_IdealOfAbelianIdentities}). Let $S$ be any characteristic section of $\underline{\operatorname{F}_1}(G)$ that is abelian and let $\map{\beta_S}{S}{S}$ be the induced automorphism. Then ${\auxpol(x)}^{k} \in J$. By construction, $\auxpolg(x) \in J$. According to {Lemma} \ref{Lemma_PowerReductiond^k}, we  have $\constB^{k} \cdot {\auxpol(x)} \in J$. Trivially, we have $|G| \in J$. Since $\gcd(|G|,\constB) = 1$, Euclid's theorem gives $a,b \in \Z$ such that $a \cdot |G| + b \cdot {\constB}^{k} = 1$. But then also ${\auxpol(x)} = 1 \cdot {\auxpol(x)} = (a \cdot |G| + b \cdot \constB^{k}) \cdot {\auxpol(x)} = a \cdot |G| \cdot \auxpol(x) + b \cdot \constB^{k} \cdot \auxpol(x) \in J$. 
\end{proof}

\section{Proof of the main theorem} \label{Section_Main}

\begin{theorem} \label{Theorem_Recursion}
	Let $G$ be a finite, solvable group with a fixed-point-free automorphism $\map{\alpha}{G}{G}$ and let $f(x)$ be an abelian identity of $\alpha$. Suppose that $f(x)$ is Higman-solvable and $\gcd(|G|,\HI (f(x))) = 1$. For each $i \in \Z_{\geq 0}$, $\Delta^{i+1}(f(x))$ is an abelian identity of the fixed-point-free automorphism $\map{\alpha^{\|f(x)\| \cdots \|\Delta^i(f(x))\|}}{\underline{\operatorname{F}_{i+1}}(G)}{\underline{\operatorname{F}_{i+1}}(G)}$. 
\end{theorem}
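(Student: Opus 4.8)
The plan is a straightforward induction on $i \in \Z_{\geq 0}$, in which every step is a single invocation of Theorem \ref{Proposition_Upgrade}. Write $f_i(x) := \Delta^i(f(x))$ and $N_i := \|f_0(x)\| \cdot \|f_1(x)\| \cdots \|f_i(x)\|$, so that $f_{i+1}(x) = \Delta(f_i(x))$ and $N_{i+1} = N_i \cdot \|f_{i+1}(x)\|$. The statement to prove then reads: $f_{i+1}(x)$ is an abelian identity of the fixed-point-free automorphism $\alpha^{N_i}$ of $\underline{\operatorname{F}_{i+1}}(G)$, and this is literally the conclusion of Theorem \ref{Proposition_Upgrade} applied to a suitable triple, once the hypotheses are checked.

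Before the induction I would record two book-keeping facts. First, since $f(x)$ is Higman-solvable and $\Delta$ preserves Higman-solvability (as already used in the proof of Proposition \ref{Proposition_MainInvNonZero}), every $f_i(x)$ is Higman-solvable; in particular $f_i(0) \neq 0$, so that $f_{i+1}(x)$ and the invariants $\rho_1(f_i(x)), \rho_2(f_i(x)), \rho_3(f_i(x))$ are defined. Second, by unwinding Definition \ref{Definition_MainInvariant} one sees that $\HI(f_{i+1}(x))$ divides $\HI(f_i(x))$ and that the product $f_i(1) \cdot \operatorname{lc}(f_i(x)) \cdot \rho_1(f_i(x)) \cdot \rho_2(f_i(x)) \cdot \rho_3(f_i(x))$ divides $\HI(f_i(x))$ whenever $\HL(f_i(x)) > 0$; and when $\HL(f_i(x)) = 0$ the polynomial $f_i(x)$ is a nonzero constant $e$, $\HI(f_i(x)) = e$, $\Delta(f_i(x)) = f_i(x)$, and that product equals $e^{3}$, whose prime divisors are exactly those of $e$. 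Hence, by induction on $i$, $\HI(f_i(x))$ divides $\HI(f(x))$; combined with $\gcd(|G|,\HI(f(x))) = 1$ and the fact that $\underline{\operatorname{F}_{i+1}}(G)$ is a characteristic subgroup of $G$ (so that $|\underline{\operatorname{F}_{i+1}}(G)|$ divides $|G|$), this yields $\gcd\bigl(|\underline{\operatorname{F}_{i+1}}(G)|,\ f_i(1) \cdot \operatorname{lc}(f_i(x)) \cdot \rho_1(f_i(x)) \cdot \rho_2(f_i(x)) \cdot \rho_3(f_i(x))\bigr) = 1$ for every $i$ — which is exactly the arithmetic hypothesis Theorem \ref{Proposition_Upgrade} asks of $f_i(x)$.

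For the base case $i = 0$ I would apply Theorem \ref{Proposition_Upgrade} to $(G,\alpha,f(x))$: the hypotheses $f(0) \neq 0$ and the coprimality just established hold, so $f_1(x) = \Delta(f(x))$ is an abelian identity of the fixed-point-free automorphism $\alpha^{\|f(x)\|} = \alpha^{N_0}$ of $\underline{\operatorname{F}_1}(G)$. For the inductive step, assume the statement for $i$, i.e. $f_{i+1}(x)$ is an abelian identity of the fixed-point-free automorphism $\beta := \alpha^{N_i}$ of $H := \underline{\operatorname{F}_{i+1}}(G)$; note $H$ is a finite solvable group and $\beta$ an automorphism of it. Apply Theorem \ref{Proposition_Upgrade} to $(H,\beta,f_{i+1}(x))$, whose hypotheses ($f_{i+1}(0) \neq 0$ and the coprimality of $|H|$ with the relevant product, both from the previous paragraph with $i$ replaced by $i+1$) are met. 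The conclusion is that $\Delta(f_{i+1}(x)) = f_{i+2}(x)$ is an abelian identity of the fixed-point-free automorphism $\beta^{\|f_{i+1}(x)\|} = \alpha^{N_i \cdot \|f_{i+1}(x)\|} = \alpha^{N_{i+1}}$ of $\underline{\operatorname{F}_1}(H) = \underline{\operatorname{F}_1}(\underline{\operatorname{F}_{i+1}}(G)) = \underline{\operatorname{F}_{i+2}}(G)$, which is the statement for $i+1$. This closes the induction.

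The conceptual work is entirely carried by Theorem \ref{Proposition_Upgrade} (and hence Proposition \ref{Theorem_Aux}): the fixed-point-freeness of each induced automorphism and the descent of an identity on a group to an identity on its first lower Fitting subgroup. The only delicate point, and the one I expect to cost the most care, is making rigorous that the single global assumption $\gcd(|G|,\HI(f(x))) = 1$ propagates, at every stage $i$, to the local coprimality assumption required by Theorem \ref{Proposition_Upgrade}; that is, nailing down the divisibility $\HI(\Delta^i(f(x))) \mid \HI(f(x))$ and the inclusion of $\rho_1, \rho_2, \rho_3$ among the divisors of $\HI(\Delta^i(f(x)))$ directly from Definition \ref{Definition_MainInvariant}, and treating the degenerate constant case $\HL(\Delta^i(f(x))) = 0$ separately as above.
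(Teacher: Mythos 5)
Your proof is correct and takes essentially the same approach as the paper: both are inductions on $i$ in which each stage is a single application of Theorem~\ref{Proposition_Upgrade}, fed by the observation that $\HI(\Delta(f(x)))$ divides $\HI(f(x))$ so that the coprimality hypothesis propagates to $\underline{\operatorname{F}_{i+1}}(G)$. The only organizational difference is that the paper applies Theorem~\ref{Proposition_Upgrade} once up front and then invokes the full theorem as the induction hypothesis on the triple $(\underline{\operatorname{F}_1}(G),\alpha^{\|f(x)\|},\Delta(f(x)))$, whereas you apply Theorem~\ref{Proposition_Upgrade} afresh at each inductive step; you also spell out (correctly) the degenerate constant case $\HL(\Delta^i(f(x)))=0$, which the paper leaves implicit.
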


\begin{proof}
	We first apply Theorem \ref{Proposition_Upgrade}. We then proceed by induction on $i \in \Z_{\geq 0}$. If $i = 0$, there is nothing left to prove. So we assume $i > 0$. By definition, $\HI (\Delta(f(x)))$ divides $\HI (f(x))$. So $\gcd(|\underline{\operatorname{F}_1}(G)|,\HI (\Delta(f(x)))) $ divides $ \gcd(|G|,\HI (f(x)))$ and is therefore $1$. The induction hypothesis then states that $\Delta^{i}(\Delta(f(x))) = \Delta^{i+1}(f(x))$ is an abelian identity of the fixed-point-free automorphism $(\alpha^{\|f(x)\|})^{\|\Delta(f(x))\| \cdots \|\Delta^{i-1}(\Delta(f(x)))\|} = \alpha^{\|f(x)\| \cdots \|\Delta^{i}(f(x))\|}$ on the subgroup $\underline{\operatorname{F}_i}(\underline{\operatorname{F}_1}(G)) = \underline{\operatorname{F}_{i+1}}(G)$ of $G$. 
\end{proof}

\begin{lemma} \label{Lemma_Torsion}
	Let $G$ be a finite, solvable group and let $\map{\alpha}{G}{G}$ be an automorphism. If $m \in \Z \setminus \{0\}$ is an abelian identity of $\alpha$ and $\gcd(|G|,m) = 1$, then $G = \{1\}$.
\end{lemma}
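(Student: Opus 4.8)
The plan is to observe that a constant abelian identity, together with coprimality, is just a disguised form of the elementary fact that a finite group whose exponent is coprime to its order is trivial. So the work is entirely in unwinding definitions; there is no real combinatorial content.

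\textbf{Key steps.} First I would apply Lemma \ref{Lemma_AbelianPowerIdentity} with $f(x) := m$. Since $G$ is finite and solvable of some derived length $k$, and $m$ is by hypothesis an abelian identity of $\alpha$, the lemma gives that $G$ is an $m(\alpha)$-group. Unwinding this for the constant polynomial $m$: the map $m(\alpha)$ is literally the $m$-th power map $g \mapsto g^{m}$, so $m(\alpha)$ being nilpotent means there is some $n$ with $g^{m^{n}} = 1$ for all $g \in G$; equivalently, the exponent of $G$ divides $m^{n}$. Second, I would close the argument by coprimality: $\operatorname{exp}(G)$ divides $|G|$, and $|G|$ is coprime to $m$, hence to $m^{n}$; a positive integer that both divides $m^{n}$ and is coprime to it must equal $1$, so $\operatorname{exp}(G) = 1$ and therefore $G = \{1\}$.

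\textbf{A self-contained variant.} If one prefers not to route through Lemma \ref{Lemma_AbelianPowerIdentity}, argue by contradiction: assume $G \neq \{1\}$ and let $A$ be the last nontrivial term of the derived series of $G$, a nontrivial abelian characteristic subgroup (hence a nontrivial abelian characteristic section) on which $\alpha$ induces an automorphism $\overline{\alpha}$. Since $m$ is an abelian identity of $\alpha$, it is an ordered identity of $\overline{\alpha}$, which by the computation in Example \ref{Ex2} means $a^{m} = 1$ for all $a \in A$, i.e. $\operatorname{exp}(A) \mid m$. But $|A|$ divides $|G|$, so $\gcd(\operatorname{exp}(A), m) = 1$, forcing $\operatorname{exp}(A) = 1$ and $A = \{1\}$ — a contradiction.

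\textbf{Main obstacle.} There is essentially none; the only point that deserves a sentence of care is the bookkeeping of matching the formal definition of a (constant) abelian identity with the elementary notion of exponent, and recalling that a nontrivial finite solvable group always admits a nontrivial abelian characteristic section. The hypothesis $m \neq 0$ is used only implicitly, to ensure that $\operatorname{exp}(G) \mid m^{n}$ is a genuine divisibility constraint.
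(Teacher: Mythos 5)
Your main argument is correct and is essentially the paper's own proof: both invoke Lemma \ref{Lemma_AbelianPowerIdentity} to conclude that $G^{m^{k}} = \{1\}$ for some $k$, and then finish by coprimality with $|G|$. The self-contained variant via the last nontrivial term of the derived series is a pleasant elementary alternative, but adds nothing beyond what the cited lemma already packages.
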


\begin{proof}
	Lemma \ref{Lemma_AbelianPowerIdentity} states that $G^{m^k} = \{1\}$, for some $k \in \Z_{\geq 0}$. So $G = \{1\}$. 
\end{proof}

We now prove a strong form of Theorem \ref{Theorem_Main} by replacing ordered identities with identities, by removing the condition on the torsion of $|G|$, and by obtaining a sharper bound on the Fitting height of $G$. 

\begin{theorem} \label{Theorem_DetailedMain}
	Let $G$ be a finite group with a fixed-point-free automorphism $\alpha$ and let $f(x)$ be an identity of $\alpha$ that is Higman-solvable. 
	\begin{enumerate}[\rm (a)]
		\item Then $G$ is solvable and the product $A \cdot B$ of two subgroups $A$ and $B$ of coprime order, 
		such that the Fitting height $h(A)$ of $A$ satisfies $$h(A) \leq \HL(f(x)) \leq \irred(f(x)) \leq \deg(f(x)),$$ and such that $|B|$ divides a natural power of the non-zero integer $\HI (f(x))$. 
		\item Suppose, moreover, that the roots of $f(x)$ form an arithmetically-free subset $X$ of the group $(\overline{\Q}^\times,\cdot)$. Then the derived length $\operatorname{dl}(A)$ of $A$ satisfies $$
		\operatorname{dl}(A) \leq \HL(f(x)) \cdot \operatorname{H}(X,\overline{\Q}^\times) \leq \deg(f(x))^{2^{\deg(f(x))}+1} .
		$$
	\end{enumerate}
\end{theorem}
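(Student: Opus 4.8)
The plan is to deduce (a) from the recursion of Theorem~\ref{Theorem_Recursion}, after first peeling off the primes that divide $\HI(f(x))$, and to obtain (b) by transporting the identity $f(x)$ all the way down to the associated graded Lie rings of the Fitting factors.

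\textbf{Part (a).} By Theorem~\ref{Theorem_Rowley}, $G$ is solvable; moreover $f(0)f(1)\neq 0$ and $k:=\HI(f(x))\in\Z\setminus\{0\}$ by Proposition~\ref{Proposition_MainInvNonZero}. Let $\pi$ be the finite set of primes dividing $k$. Being solvable, $G$ has Hall $\pi$- and $\pi'$-subgroups, and by Lemma~\ref{Lemma_UniformHallQuotient}(ii) I may take them $\alpha$-invariant: $B$ a Hall $\pi$-subgroup and $A$ a Hall $\pi'$-subgroup. Then $\gcd(|A|,|B|)=1$, $A\cap B=\{1\}$ and $|A|\,|B|=|G|$, so $G=AB$ as a set, and $|B|$ divides a natural power of $k$. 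The restriction $\alpha|_A$ is again fixed-point-free, $f(x)$ is an identity (hence an abelian identity) of $\alpha|_A$, and $\gcd(|A|,\HI(f(x)))=1$. Applying Theorem~\ref{Theorem_Recursion} to $(A,\alpha|_A,f(x))$ with $i=\HL(f(x))-1$ (the case $\HL(f(x))=0$ being handled directly by Lemma~\ref{Lemma_Torsion}), the constant $e:=\Delta^{\HL(f(x))}(f(x))\in\Z$ is an abelian identity of an automorphism of $\underline{\operatorname{F}_{\HL(f(x))}}(A)$. From Definition~\ref{Definition_MainInvariant} one reads off that $\HI(\Delta^{j}(f(x)))\mid\HI(\Delta^{j-1}(f(x)))$ for every $j$ and that $e=\HI(e)\mid\HI(\Delta^{\HL(f(x))-1}(f(x)))$, whence $e\mid\HI(f(x))$; so $e$ is coprime to $|A|$, and Lemma~\ref{Lemma_Torsion} forces $\underline{\operatorname{F}_{\HL(f(x))}}(A)=\{1\}$, i.e. $h(A)\leq\HL(f(x))$. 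The chain $\HL(f(x))\leq\irred(f(x))\leq\deg(f(x))$ is Proposition~\ref{Proposition_BoundHigmanLEngth} together with a triviality.

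\textbf{Part (b).} Now assume the roots of $f(x)$ form an arithmetically-free subset $X$ of $(\overline{\Q}^\times,\cdot)$ (note $0\notin X$ as $f(0)\neq0$). I would bound $\operatorname{dl}(A)$ via its lower Fitting factors $N_i:=\underline{\operatorname{F}_i}(A)/\underline{\operatorname{F}_{i+1}}(A)$ for $0\leq i<h(A)$, each of which is nilpotent. Since $\underline{\operatorname{F}_i}(A)$ and $\underline{\operatorname{F}_{i+1}}(A)$ are characteristic in $A$ and a characteristic section of a characteristic section is again characteristic, every abelian characteristic section of $N_i$ is one of $A$; as $f(x)$ is an abelian identity of $\alpha|_A$, it is therefore an abelian identity of the automorphism $\overline{\alpha}$ induced on $N_i$. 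Passing to the associated graded Lie ring $L=\bigoplus_{j\geq1}\Gamma_j(N_i)/\Gamma_{j+1}(N_i)$ with induced automorphism $\gamma$, each graded piece is an abelian characteristic section of $N_i$, so $f(\gamma)$ annihilates every piece and hence all of $L$; thus $f(x)$ is an identity of $\gamma$ on $L$ with root set $X$. Since $|L|=|N_i|$ divides $|A|$, which is coprime to $\HI(f(x))$, and $\discr{f(x)}\cdot\prodant{f(x)}$ divides $\HI(f(x))$ by Definition~\ref{Definition_MainInvariant}, we get $\gcd(|L|,\discr{f(x)}\cdot\prodant{f(x)})=1$ and $\discr{f(x)}\cdot L=L$; hence Theorem~\ref{Theorem_Embedding} embeds $L$ into a Lie ring $K$ graded by $(\overline{\Q}^\times,\cdot)$ and supported by $X$, and Theorem~\ref{Theorem_AF} bounds $\operatorname{c}(K)$, hence $\operatorname{c}(L)=\operatorname{c}(N_i)$, by $\operatorname{H}(X,\overline{\Q}^\times)$. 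Using $N_i^{(j)}\subseteq\Gamma_{j+1}(N_i)$ this gives $\operatorname{dl}(N_i)\leq\operatorname{c}(N_i)\leq\operatorname{H}(X,\overline{\Q}^\times)$.

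Finally, derived length is subadditive along the normal series $A=\underline{\operatorname{F}_0}(A)\geq\cdots\geq\underline{\operatorname{F}_{h(A)}}(A)=\{1\}$ with quotients $N_0,\ldots,N_{h(A)-1}$, so $\operatorname{dl}(A)\leq\sum_i\operatorname{dl}(N_i)\leq h(A)\cdot\operatorname{H}(X,\overline{\Q}^\times)\leq\HL(f(x))\cdot\operatorname{H}(X,\overline{\Q}^\times)$, and Theorem~\ref{Theorem_BoundField} gives $\operatorname{H}(X,\overline{\Q}^\times)\leq|X|^{2^{|X|}}\leq\deg(f(x))^{2^{\deg(f(x))}}$, which together with $\HL(f(x))\leq\deg(f(x))$ yields the numerical estimate. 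I expect the main subtlety to be exactly the decision to feed the \emph{original} $f(x)$, with its arithmetically-free root set $X$, into every Fitting factor: the root sets of the reduced polynomials $\Delta^i(f(x))$ need not be arithmetically-free, nor even contained in $X$, so the naive ``iterate the reduction'' approach does not control the derived length. The remaining work is the coprimality bookkeeping showing that the torsion of $A$ avoids $\discr{\cdot}$, $\prodant{\cdot}$ and the relevant constants at every level, which is immediate once one knows $\gcd(|A|,\HI(f(x)))=1$ and that all these quantities divide $\HI(f(x))$.
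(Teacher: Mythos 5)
Your proof is correct and follows essentially the same route as the paper's: part (a) uses Lemma~\ref{Lemma_UniformHallQuotient} to split off an $\alpha$-invariant Hall-$\HI(f(x))'$-subgroup $A$, then applies Theorem~\ref{Theorem_Recursion} and Lemma~\ref{Lemma_Torsion} to kill $\underline{\operatorname{F}_{\HL(f(x))}}(A)$; part (b) bounds the derived length by passing through the lower Fitting factors, their associated graded Lie rings, and the embedding/grading machinery of Theorems~\ref{Theorem_Embedding}, \ref{Theorem_AF}, and \ref{Theorem_BoundField}. The only difference from the paper's write-up is that you spell out more explicitly why the \emph{original} polynomial $f(x)$ remains an identity on each graded Lie ring and why the relevant constants divide $\HI(f(x))$ at every stage, which the paper leaves implicit.
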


Note that if $\gcd(|G|,\HI(f(x))) = 1$, then $B = \{1\}$ and $G = A$. So we do indeed recover Theorem \ref{Theorem_Main}.

\begin{proof}
	(a) The group is solvable by Rowley's Theorem \ref{Theorem_Rowley}. According to Lemma \ref{Lemma_UniformHallQuotient}, we can find a Hall-$\HI (f(x))$ subgroup $B$ of $G$ and a Hall-$\HI (f(x))'$-subgroup $A$ of $G$ satisfying $\alpha(A) = A$. Then $G = A \cdot B$ and we consider the automorphism $\map{\alpha_A}{A}{A}$ obtained by restriction. Then $f(x)$ is an abelian identity of $\alpha_A$. Let $l := \HL(f(x))$. If $l = 0$, then $f(x) = \HI (f(x))$ is a non-zero constant, so that we need only apply Lemma \ref{Lemma_Torsion} to conclude that $A = \{1_A\}$. So we assume that $l > 0$. Theorem \ref{Theorem_Recursion} then shows that the non-zero constant polynomial $\Delta^{l}(f(x))$ is an abelian identity of some automorphism of $\underline{\operatorname{F}_{l}}(A)$. Since $\Delta^{l}(f(x))$ divides $\HI (f(x))$, we need only apply Lemma \ref{Lemma_Torsion} to conclude that $\underline{\operatorname{F}_{l}}(A) = \{1\}$. Proposition \ref{Proposition_BoundHigmanLEngth} further states that $l \leq \irred(f(x)) \leq \deg(f(x))$. \\
	
	(b) We assume that $l := \HL(f(x)) > 0$, since otherwise there is nothing left to prove. Let $S_i := \underline{\operatorname{F}_i}(A) / \underline{\operatorname{F}_{i+1}}(A)$ be the $i$'th factor of the lower Fitting series of $A$. We then have the coarse bound $\operatorname{dl}(A) \leq \operatorname{dl}(S_0) + \cdots + \operatorname{dl}(S_{l-1}) \leq \operatorname{c}(S_0) + \cdots + \operatorname{c}(S_{l-1})$. Proposition \ref{Proposition_BoundHigmanLEngth} shows that $l \leq \deg(f(x))$. So it suffices to show that $\operatorname{c}(S_i) \leq \operatorname{H}(X,\overline{\Q}^\times) \leq |X|^{2^{|X|}}$, for each $i \geq 0$. Let $L$ be the Lie ring that naturally corresponds with the lower central series of some such $S_i$ and let $\map{\beta}{L}{L}$ be the induced Lie ring automorphism. Then $f(x)$ is an identity of $\beta$. Since $\gcd(|L|,\discr{f(x)} \cdot \prodant{f(x)}) | \gcd(|A|, \HI(f(x))) = 1$, we may use Theorem \ref{Theorem_Embedding} to embed $\discr{f(x)} \cdot L$ into a Lie ring $K$ that is graded by $\overline{\Q}^\times$ and supported by $X$. Theorem \ref{Theorem_AF} then states that $K$ is nilpotent of class $\operatorname{c}(K) \leq \operatorname{H}(X,\overline{\Q}^\times)$. Theorem \ref{Theorem_BoundField} further shows that $\operatorname{H}(X,\overline{\Q}^\times) \leq |X|^{2^{|X|}}$. By combining these observations, we obtain $\operatorname{c}(S_i) = \operatorname{c}(L) = \operatorname{c}(\discr{f(x)} \cdot L) \leq \operatorname{c}(K) \leq \operatorname{H}(X,\overline{\Q}^\times) \leq |X|^{2^{|X|}}$. 
\end{proof}


\begin{remark} \label{Remark_Auto}
	Every monic, Higman-solvable polynomial $f(x)$ of positive degree is an ordered identity of a fixed-point-free automorphism of a finite, non-trivial group.
\end{remark}

Indeed, let $p$ be any prime not dividing the non-zero integer $f(0) \cdot f(1)$. Then the companion operator $\alpha$ of $f(x)$ defines a fixed-point-fee automorphism of the elementary-abelian $p$-group of rank $\deg(f(x))$  and $f(x)$ is an ordered identity of $\alpha$. We refer to {\cite[Section~3]{MoensIdentitiesGroups}} for more interesting constructions.

\begin{remark} \label{Remark_Ident}
	Every fixed-point-free automorphism $\alpha$ of a finite, non-trivial group $G$ has a monic, ordered identity that is Higman-solvable.
\end{remark}

Indeed, the polynomial $f(x) := -1 + |G| \cdot x^{|\alpha|-1} + x^{|\alpha|}$ is an ordered identity of $\alpha$. By Perron's criterion, this $f(x)$ is irreducible and therefore Higman-solvable. We refer to {\cite[Section~3]{MoensIdentitiesGroups}} for more interesting constructions.

\begin{remark}
	Suppose that $f(x) \in \Z[x]$ is not divisible by $x$ or by any cyclotomic polynomial. Then its root set is arithmetically-free.
\end{remark}

Indeed, for every pair of roots $(\lambda,\mu)$, the arithmetic progression $\lambda, \lambda \cdot \mu , \lambda \cdot \mu^2 , \ldots $ contains infinitely-many distinct elements and is therefore not contained in the root set.

\section{Examples} \label{Secition_RemarksExamples}


\begin{example} \label{Ex_I1}
	Let $G$ be a finite group with a f.p.f. automorphism $\map{\alpha}{G}{G}$, satisfying $ \{ g^2 \cdot \alpha(g) \cdot \alpha^2(g)^8 \cdot \alpha^3(g)^4 \cdot \alpha^4(g)^8 \cdot \alpha^5(g)^4 \cdot \alpha^6(g)^2 \cdot \alpha^7(g) | g \in G\}  = \{1\}$. Then $G$ is the product of a metabelian subgroup $A$ and a Hall-$(2 \cdot 3 \cdot 5)$ subgroup $B$.
\end{example}

\begin{proof}
	We see that $f(x) := (x^4+3x^2+1)(x^2+1)(x+2)$ is an identity of the automorphism. Example \ref{Example_Delta} shows that $f(x)$ is Higman-solvable with $\HL(f(x)) = 2$. One can verify that $\HI(f(x))$ divides a natural power of $2 \cdot 3 \cdot 5$. Moreover, the root set $X$ of $f(x)$ is product-free, so that $\operatorname{H}(X,\overline{\Q}^\times) = 1$. So we need only apply Theorem~\ref{Theorem_DetailedMain}. 
\end{proof}

\begin{example} \label{Ex_I2}
	Let $G$ be a finite group with a f.p.f. automorphism $\map{\alpha}{G}{G}$, satisfying $ \{g^{10} \cdot \alpha^5(g)^{-2}  \cdot \alpha^2(g)^{-5}  \cdot  \alpha(g)^{10}  \cdot  \alpha^6(g) \cdot  \alpha^4(g)^{-2}  \cdot  \alpha^3(g)^{-5}  \cdot  \alpha^7(g) | g \in G \} = \{1\}$. Then $G$ is the product of a metabelian-by-nilpotent subgroup $A$ and a Hall-$(2 \cdot 3 \cdot 5 \cdot 7 \cdot 11 \cdot 19)$ subgroup $B$.
\end{example}

\begin{proof}
	We see that $f(x) := (x^4-5)(x^2-2)(x+1)$ is an identity of the automorphism. Example \ref{Example_Delta} shows that $f(x)$ is Higman-solvable with $\HL(f(x)) = 3$. One can verify that $\HI (f(x))$ divides a natural power of $2 \cdot 3 \cdot 5 \cdot 7 \cdot 11 \cdot 19$. So we may apply Theorem~\ref{Theorem_DetailedMain} and conclude that $B$ is solvable with Fitting height at most $3$. We next observe that the roots of $f(x)$ \emph{do not form an arithmetically-free subset} of $(\overline{\Q}^\times,\cdot)$. But we know, from Theorem \ref{Theorem_Recursion}, that $\Delta^1(f(x)) = (x^4 - 5)(x^2 - 2)$ is an abelian identity of the f.p.f. automorphism $\map{\alpha}{\underline{\operatorname{F}_1}(B)}{\underline{\operatorname{F}_1}(B)}$. Since $\Delta(f(x))$ has a product-free set of roots, say $Y$, Theorem~\ref{Theorem_DetailedMain} states that $\operatorname{dl}(\underline{\operatorname{F}_1}(B)) \leq \HL(\Delta(f(x))) \cdot \operatorname{H}(Y,\overline{\Q}^\times) \leq (3 - 1) \cdot 1 = 2$. A slightly more complicated argument allows us to show that $B$ is abelian-by-nilpotent. 
\end{proof}

\begin{example}
	Let $G$ be a finite group with f.p.f. automorphism $\map{\alpha}{G}{G}$, satisfying $\{\alpha^3(g)^{-1} \cdot g^3 \cdot \alpha^3(g)^{-1} \cdot g^3 \cdot \alpha^2(g)^{-3} \cdot \alpha^5(g) | g \in G \} = \{1\}$. Then $G$ is the product of an abelian subgroup $A$ and a Hall-$(2 \cdot 3 \cdot 5 \cdot 7 \cdot 11 \cdot 17 \cdot 73)$ subgroup $B$.
\end{example}

\begin{proof}
	We note that $h(x) := (x^2 -2)(x^3-3)$ is an identity of $\alpha$. Example \ref{Example_Delta} shows that it is \emph{not Higman-solvable}. But, since $h(x)$ has {no roots of finite order}, it divides some Higman-solvable polynomial $f(x)$ with $\HL(f(x)) \leq 1$. This $f(x)$ is then an identity of $\alpha$, according to Lemma \ref{Lemma_IdealOfIdentities}, and it satisfies the assumptions of Theorem~\ref{Theorem_DetailedMain}. It now remains to observe that $f(x) := (x^6-2^3)(x^6-3^2)$ is such a polynomial with a product-free set of roots and that $\HI(f(x))$ divides a natural power of $2 \cdot 3 \cdot 5 \cdot 7 \cdot 11 \cdot 17 \cdot 73$.
\end{proof}


\bibliographystyle{plain}
\bibliography{MySpecialBibliography}

\def\cprime{$'$} \def\cprime{$'$} \def\cprime{$'$} \def\cprime{$'$}
  \def\cprime{$'$} \def\cprime{$'$} \def\cprime{$'$} \def\cprime{$'$}
\begin{thebibliography}{10}

\bibitem{AlperinPowerAutomorphism}
J.~Alperin.
\newblock A classification of {$n$}-abelian groups.
\newblock {\em Canad. J. Math.}, 21:1238--1244, 1969.

\bibitem{Baer}
R.~Baer.
\newblock Factorization of {$n$}-soluble and {$n$}-nilpotent groups.
\newblock {\em Proc. Amer. Math. Soc.}, 4:15--26, 1953.

\bibitem{Berger}
T.~R. Berger.
\newblock Nilpotent fixed point free automorphism groups of solvable groups.
\newblock {\em Math. Z.}, 131:305--312, 1973.

\bibitem{Bhargava}
M.~Bhargava.
\newblock Irreducibility of random polynomials of large degree. {M}athematical
  {C}ongress of the {A}mericas 2021. \text{https://youtu.be/0Iv82mJ7xEI}.

\bibitem{Dade}
E.~C. Dade.
\newblock Carter subgroups and {F}itting heights of finite solvable groups.
\newblock {\em Illinois J. Math.}, 13:449--514, 1969.

\bibitem{ErcanGulogluJabara}
G.~Ercan, \.{I}~\c{S}. G\"{u}lo\u{g}lu, and E.~Jabara.
\newblock Good action on a finite group.
\newblock {\em J. Algebra}, 560:486--501, 2020.

\bibitem{ErcanGulinGuloglu}
G\"{u}lin Ercan and \.{I}smail~\c{S}. G\"{u}lo\u{g}lu.
\newblock Fixed point free action on groups of odd order.
\newblock {\em J. Algebra}, 320(1):426--436, 2008.

\bibitem{ErsoySplittingAutomorphisms}
K{\i}van{\c{c}} Ersoy.
\newblock Finite groups with a splitting automorphism of odd order.
\newblock {\em Arch. Math. (Basel)}, 106(5):401--407, 2016.

\bibitem{GrossFittingEx}
F.~Gross.
\newblock A note on fixed-point-free solvable operator groups.
\newblock {\em Proc. Amer. Math. Soc.}, 19:1363--1365, 1968.

\bibitem{HallHigmanReduction}
P.~Hall and G.~Higman.
\newblock On the {$p$}-length of {$p$}-soluble groups and reduction theorems
  for {B}urnside's problem.
\newblock {\em Proc. London Math. Soc. (3)}, 6:1--42, 1956.

\bibitem{HigmanGroupsAndRings}
G.~Higman.
\newblock Groups and rings having automorphisms without non-trivial fixed
  elements.
\newblock {\em J. London Math. Soc.}, 32:321--334, 1957.

\bibitem{HughesThompson}
D.~R. Hughes and J.~G. Thompson.
\newblock The {$H$}-problem and the structure of {$H$}-groups.
\newblock {\em Pacific J. Math.}, 9:1097--1101, 1959.

\bibitem{JabaraFitting}
E.~Jabara.
\newblock The {F}itting length of finite soluble groups {II}:
  {F}ixed-point-free automorphisms.
\newblock {\em J. Algebra}, 487:161--172, 2017.

\bibitem{KegelSplit}
O.~Kegel.
\newblock Die {N}ilpotenz der {$H_{p}$}-{G}ruppen.
\newblock {\em Math. Z.}, 75:373--376, 1960/1961.

\bibitem{KhukhroSplitPrimeFiniteGroup}
E.~Khukhro.
\newblock Locally nilpotent groups that admit a splitting automorphism of prime
  order.
\newblock {\em Mat. Sb. (N.S.)}, 130(172)(1):120--127, 128, 1986.

\bibitem{KhukhroCompact}
E.~I. Khukhro.
\newblock A remark on periodic compact groups.
\newblock {\em Sibirsk. Mat. Zh.}, 30(3):187--190, 221, 1989.

\bibitem{KostrikinBurnsideProblem}
A.~Kostrikin.
\newblock The {B}urnside problem.
\newblock {\em Izv. Akad. Nauk SSSR Ser. Mat.}, 23:3--34, 1959.

\bibitem{KrekninKostrikinRegularAutomorphisms}
V.~Kreknin and A.~Kostrikin.
\newblock Lie algebras with regular automorphisms.
\newblock {\em Dokl. Akad. Nauk SSSR}, 149:249--251, 1963.

\bibitem{MoensIdentitiesGroups}
W.~A. Moens.
\newblock Finite groups with a fix-point-free automorphism satisfying an
  identity: \text{https://arxiv.org/abs/1810.04965}.

\bibitem{MoensAF}
W.~A. Moens.
\newblock Arithmetically-free group-gradings of {L}ie algebras: {II}.
\newblock {\em J. Algebra}, 492:457--474, 2017.

\bibitem{Rowley}
P.~Rowley.
\newblock Finite groups admitting a fixed-point-free automorphism group.
\newblock {\em J. Algebra}, 174(2):724--727, 1995.

\bibitem{ShalevCentralizersResiduallyFiniteTorsion}
A.~Shalev.
\newblock Centralizers in residually finite torsion groups.
\newblock {\em Proc. Amer. Math. Soc.}, 126(12):3495--3499, 1998.

\bibitem{ThompsonFixPointFreeAutomorphisms}
J.~Thompson.
\newblock Finite groups with fixed-point-free automorphisms of prime order.
\newblock {\em Proc. Nat. Acad. Sci. U.S.A.}, 45:578--581, 1959.

\bibitem{ThompsonFitting}
J.~Thompson.
\newblock Automorphisms of solvable groups.
\newblock {\em J. Algebra}, 1:259--267, 1964.

\bibitem{TurullFittingHeight}
Alexandre Turull.
\newblock Fitting height of groups and of fixed points.
\newblock {\em J. Algebra}, 86(2):555--566, 1984.

\bibitem{VanDerWaerdenGalois}
B.~L. van~der Waerden.
\newblock Die {S}eltenheit der {G}leichungen mit {A}ffekt.
\newblock {\em Math. Ann.}, 109(1):13--16, 1934.

\bibitem{EfimBurnsideProblemOdd}
E.~Zel{\cprime}manov.
\newblock Solution of the restricted {B}urnside problem for groups of odd
  exponent.
\newblock {\em Izv. Akad. Nauk SSSR Ser. Mat.}, 54(1):42--59, 221, 1990.

\bibitem{EfimBurnsideProblemEven}
E.~Zel{\cprime}manov.
\newblock Solution of the restricted {B}urnside problem for {$2$}-groups.
\newblock {\em Mat. Sb.}, 182(4):568--592, 1991.

\bibitem{ZelmanovPlatonovConjecture}
E.~Zel{\cprime}manov.
\newblock On periodic compact groups.
\newblock {\em Israel J. Math.}, 77(1-2):83--95, 1992.

\bibitem{ZelmanovAlgebrasTorsionGroupsIdentity}
E.~Zel{\cprime}manov.
\newblock Lie algebras and torsion groups with identity.
\newblock {\em J. Comb. Algebra}, 1(3):289--340, 2017.

\end{thebibliography}

\textsc{Faculty of Mathematics, University of Vienna, Oskar-Morgenstern-Platz 1, 1090 Vienna, Austria.} \\
\emph{E-mail address}: Wolfgang.Moens@univie.ac.at

\end{document}